\def\fakeend{\end{document}}
\newcommand{\ignore}[1]{}
\newcommand{\startClaims}{\setcounter{claim}{0}}
\newtheorem{theorem}{Theorem}[section]
\newtheorem{corollary}[theorem]{Corollary}
\newtheorem{lemma}[theorem]{Lemma}
\newtheorem{definition}[theorem]{Definition}
\newtheorem{claim}{Claim}
\let\c@figure\c@theorem
\title{Levi's Lemma, pseudolinear drawings of $K_n$,\\ and empty triangles}
\author{Alan Arroyo\footnote{Supported by CONACYT.}\ ${}^+$, Dan McQuillan${}^\pm$,\\  R.\ Bruce Richter\footnote{Supported by NSERC.
\newline{\textcolor{white}{...}}
${}^+$University of Waterloo, ${}^\pm$Norwich University, and ${}^\times$UASLP}\ ${}^+$, and Gelasio Salazar${}^*$${}^\times$}
\date{\LaTeX-ed: \today}
\newenvironment{proof}%
{\noindent{\bf Proof.}\ }%
{\hfill\eopf\par\bigskip}%
\def\eop{\hfill{{\rule[0ex]{7pt}{7pt}}}}
\newenvironment{cproofof}[1]
{\bigskip\noindent{\bf Proof of #1.}\startClaims\ }
{\hfill{\eop}\par\bigskip}
\newenvironment{cproof}
{\noindent{\bf Proof.}\startClaims\ }
{\hfill{\eop}\par\bigskip}
\def\i4c{{inter\-nally-4-con\-nec\-ted}}
\def\p4c{\wording{peri\-phe\-rally-4-con\-nec\-ted}}
\def\ei4c{\operatorname{\wording{p4c}}}
\def\2cc{{2-cros\-sing-cri\-tical}}
\def\m2{{{\cal M}_2^3}}
\newcommand{\eopf}{\raisebox{0.8ex}{\framebox{}}}
\newcommand{\majorrem}[1]{}
\newcommand{\minorrem}[1]{}
\newcommand{\wording}[1]{#1}
\newcommand{\wordingrem}[1]{}
\newcommand{\dragominorrem}[1]{}
\def\rtwo{\mathbb R^2}
\def\r5{r^*_{+5}}
\def\rightspine #1{{}_{#1}\kern-3pt\sqsubset}
\def\Side#1#2{\Sigma_{#1}^{#2}}
\newcommand{\change}[1]{{#1}}
\begin{document}

\maketitle

\begin{abstract} There are three main thrusts to this article: a new proof of Levi's Enlargement Lemma for pseudoline arrangements in the real projective plane; a new characterization of pseudolinear drawings of the complete graph; and proofs that pseudolinear and convex drawings of $K_n$ have $n^2+{}$O$(n\log n)$ and O$(n^2)$, respectively, empty triangles.  All the arguments are elementary, algorithmic, and self-contained.
\end{abstract}

AMS Subject Classification Primary 52C30; Secondary 05C10, 68R10
\baselineskip =18pt

\section{Introduction}\label{sec:intro}

The Harary-Hill Conjecture asserts that the crossing number of the complete graph $K_n$ is equal to 
\[
H(n) := \frac 14\left\lfloor\frac{\mathstrut n}{\mathstrut 2}\right\rfloor
\left\lfloor\frac{\mathstrut n-1}{\mathstrut 2}\right\rfloor
\left\lfloor\frac{\mathstrut n-2}{\mathstrut 2}\right\rfloor
\left\lfloor\frac{\mathstrut n-3}{\mathstrut 2}\right\rfloor \,.
\]
The work of \'Abrego et al \cite{abrego} verifies this conjecture for ``shellable" drawings of $K_n$; this is one of the first works that identifies a topological, as opposed to geometric, criterion for a drawing to have at least $H(n)$ crossings.  

Throughout this work, all drawings of graphs are {\em good drawings\/}: no two edges incident with a common vertex cross; no three edges cross at a common point; and no two edges cross each other more than once.

It is well-known that the {\em rectilinear\/} crossing number (all edges are required to be straight-line segments) of $K_n$ is, for $n\ge 10$, strictly larger than $H(n)$.  In fact, this applies to the more general {\em pseudolinear\/} crossing number.  

An {\em arrangement of pseudolines\/} $\Sigma$ is a finite set of simple open arcs in the plane $\rtwo$ such that:  for each $\sigma\in\Sigma$, $\rtwo\setminus \sigma$ is not connected; and for distinct $\sigma$ and $\sigma'$ in $\Sigma$, $\sigma\cap \sigma'$ consists of a single point, which is a crossing.   

A drawing of $K_n$ is {\em pseudolinear\/} if there is an arrangement of $\Sigma$ of pseudolines such that each edge of $K_n$ is contained in one of the pseudolines and each pseudoline contains just one edge.    It is clear that a rectilinear drawing (chosen so no two lines are parallel) is pseudolinear.

The arguments (originally due to Lov\'asz et al \cite{lovasz} and, independently, \'Abrego and Fern\'andez-Merchant \cite{af-m}) that show every rectilinear drawing of $K_n$ has at least $H(n)$ crossings apply equally well to pseudolinear drawings.

The proof that every optimal pseudolinear drawing of $K_n$ has its outer face bounded by a triangle \cite{optTriang} uses the ``allowable sequence" characterization of pseudoline arrangements of Goodman and Pollack \cite{goodPoll}.  Our principal result is that there is another, topological, characterization of pseudolinear drawings of $K_n$.

Let $D$ be a drawing of $K_n$ in the sphere.  For any three distinct vertices $u,v,w$ of $K_n$, the triangle $T$ induced by $u,v,w$ is such that $D[T]$ \change{(the subdrawing of $D$ induced by the subgraph $T$)} is a simple closed curve in the sphere.  

\change {This simple observation leads to the natural ideas of a convex drawing of $K_n$ and a face-convex drawing of $K_n$, which capture at different levels of generality the notion of a convex set in Euclidean space.}

\begin{definition}\label{df:convex} { Let $D$ be a drawing of $K_n$ in the sphere. }
\begin{enumerate} 
\item {Let $T$ be a 3-cycle in $K_n$.  
Then  a closed disc $\Delta$ bounded by $D[T]$ is {\em convex\/} if, for any distinct vertices $u$ and $v$ of $K_n$ such that both $D[u]$ and $D[v]$ are in $\Delta$, then $D[uv]\subseteq \Delta$.}
\item {The drawing $D$  is {\em convex\/} if, for every 3-cycle $T$ in $K_n$, at least one of the closed discs bounded by $D[T]$ is convex.} 
\item \change{A {\em face\/} of $D$ is a component of $\mathbb R^2\setminus D[K_n]$.}
\item\label{it:faceConvex}  { The drawing $D$ is {\em face-convex\/} if there is a face $F$ of $D$ such that, for every triangle $T$ of $D$, the closed disc bounded by $D[T]$ and not containing $F$ is convex.  The face $F$ is the {\em outer face\/} of $D$.}\end{enumerate}
\end{definition}

There seem to be interesting connections between convexity and Knuth's CC systems \cite{knuth}, but we have not yet  formalized this.

In Definition \ref{df:convex} \ref{it:faceConvex}, there is necessarily at least one outer face that shows the drawing to be face-convex.   The unique drawing of $K_6$ with three crossings has two such faces.  (See Figure \ref{fg:TkS}.)  

It is convenient for the definition of convexity to use drawings in the sphere:  every simple closed curve is the boundary of two closed discs.   \change{Every drawing in the plane is converted  by the standard 1-point compactification into a spherical drawing.  Keeping track of the infinite face $F$ in a pseudolinear drawing in the plane results in a face-convex drawing in the sphere with outer face $F$.  The interesting point is the converse: if we convert the face $F$ in the definition of face-convex to be the unbounded face, then the resulting drawing in the plane is pseudolinear.}

 \begin{figure}[!ht]
\begin{center}
\includegraphics[scale=.2]{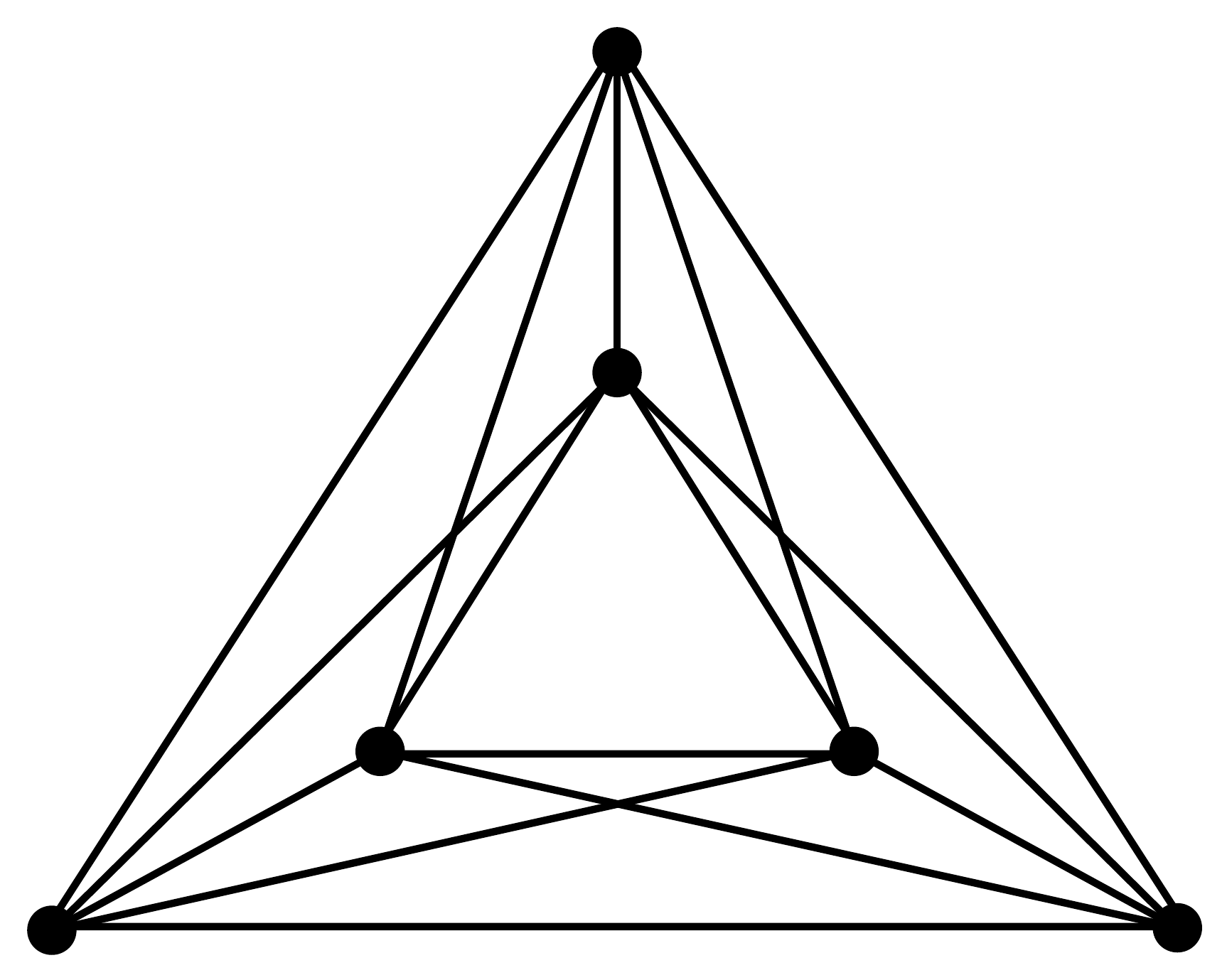}
\caption{The two faces bounded by 3-cycles can each be the outer face.}\label{fg:TkS}
\end{center}
\end{figure}

\begin{theorem}\label{th:strongConvex}  A drawing of $K_n$ in the plane is face-convex if and only if it is pseudolinear.  \end{theorem}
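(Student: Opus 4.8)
The plan is to prove the two implications separately; the forward implication follows directly from the arrangement axioms, while the converse carries the real content.

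Suppose first that $D$ is realized by a pseudoline arrangement $\Sigma=\{\ell_e:e\in E(K_n)\}$ and let $F$ be the unbounded face. Fix a $3$-cycle $T=uvw$. The pseudolines $\ell_{uv},\ell_{vw},\ell_{uw}$ pairwise cross exactly once, and these crossings are forced to be the shared endpoints $u,v,w$; hence the three pseudolines bound a single bounded cell whose closure is precisely the closed disc $\Delta$ of $D[T]$ avoiding $F$, with $\partial\Delta=D[uv]\cup D[vw]\cup D[uw]$. To see that $\Delta$ is convex, take vertices $a,b$ with $D[a],D[b]\in\Delta$ and consider $\ell_{ab}\in\Sigma$. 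Since $\ell_{ab}$ meets each of $\ell_{uv},\ell_{vw},\ell_{uw}$ at most once, it meets $\partial\Delta$ in at most three points; as both ends of $\ell_{ab}$ run to infinity outside the bounded set $\Delta$ while $a\in\Delta$, a Jordan-curve parity argument forces exactly two such points. Thus $\ell_{ab}\cap\Delta$ is a single sub-arc containing $a$ and $b$, so $D[ab]\subseteq\Delta$. As $T$ was arbitrary, $D$ is face-convex with outer face $F$.

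For the converse, assume $D$ is face-convex with outer face $F$ and draw it in the plane with $F$ unbounded. I would prolong each edge $D[uv]$ beyond $u$ and beyond $v$, routing both prolongations out through $F$ to a large circle $C$ bounding a disc that contains all of $D[K_n]$, and then match the ends across $C$ (equivalently, through the line at infinity) so that each arc closes up to a bi-infinite pseudoline. The requirement is that the resulting family $\{\ell_{uv}\}$ be a pseudoline arrangement: each $\ell_{uv}$ separates the plane and each pair crosses exactly once. Pairs of edges crossing in $D$ already contribute their single crossing and must gain no other; pairs that do not cross in $D$ must be made to cross exactly once out in the prolongation region. The engine for producing a consistent system of prolongations is Levi's Enlargement Lemma---whose proof is one of this paper's contributions---applied incrementally: process the edges one at a time, extending each to a pseudoline through the arrangement built so far.

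The step I expect to be the main obstacle is showing that face-convexity supplies exactly the local-to-global consistency these extensions require. The controlling local fact is that every $4$-vertex sub-drawing of a convex drawing is again convex, and a convex $K_4$ is one of the two rectilinear shapes; for a matching pair $e=uv$, $f=xy$ this pins down whether $e$ and $f$ cross and, if they do not, on which side their prolongations must meet, while forbidding the configuration that would force a second crossing. The difficulty is to assemble these mutually compatible local prescriptions into a single realizable cyclic order of the $2\binom{n}{2}$ prolongation ends on $C$: one must rule out that locally consistent choices become globally contradictory. I expect convexity to close precisely this gap, since the convex disc of each triangle $T$ certifies that no edge, once it has left the region bounded by $D[T]$, can be forced to re-enter it---which is exactly the topological statement that prevents a second crossing---and Levi's Lemma then converts incremental extendability into the existence of the full arrangement.
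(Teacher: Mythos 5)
Your forward implication is essentially correct and matches what the paper treats as routine: pseudolines carrying edges with a common endpoint must cross exactly at that vertex, so the three pseudolines of a triangle $T$ pairwise cross at its corners, the bounded cell of this subarrangement is the closed disc $\Delta$ avoiding the unbounded face, and the parity count on $\ell_{ab}$ (an even number of boundary crossings, at most three, hence exactly two, provided one notes the corner cases where $a$ or $b$ is a vertex of $T$) gives $D[ab]\subseteq\Delta$. The paper disposes of this direction in a sentence in the introduction; all of Section~\ref{sec:pseudolinear} is the converse, which is where your proposal stops being a proof.

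For the converse, the one mechanism you name---Levi's Enlargement Lemma applied incrementally---does not work as stated, and the consistency problem you explicitly defer (``I expect convexity to close precisely this gap'') is the entire content of the theorem. Theorem~\ref{th:levy} extends a family that is \emph{already} a pseudoline arrangement by a pseudoline through two prescribed \emph{points}; midway through your induction the family consists of some completed pseudolines together with bare edge-arcs that pairwise may fail to cross, so the hypothesis fails, and even where it applies, Levi gives no way to force the new pseudoline to contain the drawn arc $D[e]$ rather than merely pass through its endpoints. Accordingly the paper never invokes Theorem~\ref{th:levy} in Section~\ref{sec:pseudolinear}; it reuses the underlying engine, the dual-path Lemma~\ref{lm:dualPaths}, inside a bespoke two-stage induction whose correctness rests on structural facts you do not have: Lemma~\ref{lm:basicProperties} (the convex hulls $\Delta_{\Side{e}1}$ and $\Delta_{\Side{e}2}$ of the two sides of an edge meet only in the edge, and same-side pairs never cross opposite-side pairs) and Lemma~\ref{lm:noneOrTwo} (no side-alternation of four vertices around a hull cycle), which together bound every earlier arc to at most two crossings with each of $P_{e_i}^1,P_{e_i}^2$ (the ``Useful Fact''); the analysis of \emph{unavoidable} arcs and the \emph{restricted} segments $R_u^1,R_u^2$ on $f_{e_i}^u$, whose disjointness is a nontrivial claim, which dictates where the new extension may exit; and, for the prolongations through $F$, the placement of the two ends of each pseudoline at \emph{antipodal} points of a large circle $\gamma$, so that any two pseudolines have interlaced ends, hence cross an odd number of times, hence---being bounded by two---exactly once. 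This antipodal-interlacing device is precisely what resolves your worry about a ``realizable cyclic order of the $2\binom n2$ prolongation ends,'' and your heuristic that no edge re-enters a convex triangle it has left constrains a single edge against a triangle but says nothing about two artificial extension arcs crossing each other twice, either inside the faces $F_{e_i}^u$ or out in $F$; that is the failure mode the paper's machinery exists to exclude, and it is absent from the proposal.
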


This theorem is proved in Section \ref{sec:pseudolinear}.  An independent recent proof has been found by Aichholzer et al \cite{ahpsv}; their proof uses Knuth's  CC systems \cite{knuth} (reinforcing the interest in the connection with convexity), the duals of which are realizable as pseudolinear arrangements of lines.    Moreover, their statement is in terms of a forbidden configuration.  Properly speaking, their result is of the form, ``there exists a face relative to which the forbidden configuration does not occur".  Their face and our face are the same.  However, our proof is completely different, yielding directly a polynomial time algorithm for finding the pseudolines.

Aichholzer et al show that the there is a pseudolinear drawing of $K_n$ having the same crossing pairs of edges as the given drawing of $K_n$.   Gioan's Theorem \cite{gioan} that any two drawings of $K_n$ with the same crossing pairs of edges are equivalent up to Reidemeister III moves  is then invoked to show that the original drawing is also pseudolinear.     Our proof is completely self-contained; in particular, it does not involve CC-systems and does not invoke Gioan's Theorem.   

The ideas we use are elementary and derive from a simple, direct proof of Levi's Enlargement Lemma given in Section \ref{sec:levy}.  In a separate paper \cite{usGioan}, we give a proof of Gioan's Theorem in the same spirit.

In Section \ref{sec:emptyTriangles}, we extend the B\'ar\'any and F\"uredi \cite{barany} theorem that a rectilinear drawing of $K_n$ has at least $n^2+O(n\log n)$ empty triangles to pseudolinear drawings of $K_n$.  Moreover, we show that a convex drawing of $K_n$ has at least $n^2/3 + O(n)$ empty triangles.

\section{Proof of Levi's Enlargement Lemma}\label{sec:levy}

In this section, we prove Levi's Enlargement Lemma \cite{levy}.  This important fact seems to have only one proof by direct geometric methods in English \cite{grunbaum}.  The proof in \cite{grunbaum} includes a simple step that Gr\"unbaum admits seems clumsy.  Our proof avoids this technicality.  (There is another proof by Sturmfels and Ziegler via oriented matroids \cite{sturmfels}.)

One fact we do use is that there is an alternative definition of an arrangement of pseudolines.  Equivalent to the definition given in the introduction, a {\em pseudoline\/} is a non-contractible simple closed curve in the real projective plane, and an arrangement of pseudolines is a set of pseudolines, any two intersecting in exactly one point; the intersection is necessarily a crossing point.  This perspective will be used in the proof of Levi's Enlargement Lemma.

\begin{theorem}[Levi's Enlargement Lemma]\label{th:levy}
Let $\Sigma$ be an arrangement of pseudolines  and let $a,b$ be any two points in the plane not both in the same pseudoline in $\Sigma$.  Then there is a pseudoline $\sigma$ that contains both $a$ and $b$ and such that $\Sigma\cup\{\sigma\}$ is an arrangement of pseudolines.
\end{theorem}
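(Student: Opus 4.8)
The plan is to work entirely in the projective-plane model described above, where each pseudoline is a non-contractible simple closed curve and any two of them cross exactly once. In this model the desired $\sigma$ is simply a non-contractible simple closed curve through $a$ and $b$ meeting each member of $\Sigma$ in exactly one point. I would construct it in two stages: first produce \emph{some} admissible closed curve through $a$ and $b$, then repair its intersections with the pseudolines one pseudoline at a time.

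For the first stage I would choose any simple closed curve $\gamma$ through $a$ and $b$ that is non-contractible in $\pp$ and transverse to $\Sigma$ (avoiding all crossing points of $\Sigma$); such a $\gamma$ plainly exists. The point of insisting on non-contractibility is homological: since every pseudoline is non-contractible and $H_1(\pp;\mathbb Z/2)\cong\mathbb Z/2$, a non-contractible curve meets each $\sigma_i$ in an \emph{odd} number of points, hence at least one. This parity is exactly what makes the repair stage work, because the repair only ever deletes intersection points in pairs, and so can never reduce an odd count below one.

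The second stage is an iterative reduction. Fix $\sigma_i\in\Sigma$ and suppose the current curve meets it in more than one point (so, being odd, in at least three). I would cut $\pp$ open along $\sigma_i$; because $\sigma_i$ is non-contractible this yields a closed disc, the current curve becomes a family of disjoint simple arcs with endpoints on the boundary circle, and an innermost such arc exhibits a \emph{bigon}: a disc bounded by one subarc of the current curve and one subarc of $\sigma_i$. Pushing the curve's subarc across $\sigma_i$ deletes two intersection points with $\sigma_i$. The crucial observation is that this push preserves the number of intersection points with every other pseudoline $\sigma_j$: since $\sigma_j$ meets each of the current curve and $\sigma_i$ in at most one point, it can cross the bigon boundary only by entering through the curve's subarc and leaving through the $\sigma_i$-subarc, a single pass, so relocating the arc to the far side of $\sigma_i$ merely moves that lone crossing without changing its count. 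Iterating drives the intersection with $\sigma_i$ down to exactly one, and because the counts with all other pseudolines are untouched, once a pseudoline has been reduced to a single crossing it stays that way; so I would simply process the members $\sigma_1,\dots,\sigma_m$ of $\Sigma$ in turn.

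I expect the main obstacle to be the bookkeeping in this reduction step: verifying that an innermost bigon can always be chosen whose curve-subarc avoids both $a$ and $b$ (so that the push never dislodges the curve from the two prescribed points), and confirming in full that the push keeps the curve simple while leaving every other intersection count unchanged. The hypothesis that $a$ and $b$ do not both lie on one pseudoline enters precisely here: if they both lay on some $\sigma_i$, the two forced intersection points $a$ and $b$ could never be reduced to one, so the construction would—correctly—be impossible. Once these checks are in place, the final curve is simple, passes through $a$ and $b$, and meets each pseudoline exactly once; being of odd intersection with each it is non-contractible, so $\Sigma\cup\{\sigma\}$ is again an arrangement of pseudolines.
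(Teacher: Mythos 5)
Your overall architecture (start with any non-contractible curve through $a$ and $b$, use the $\mathbb{Z}/2$ intersection parity in the projective plane, then shorten by innermost-bigon removal) is a legitimate and genuinely different strategy from the paper's, but as written it has a real gap, and it is exactly the one you flagged and postponed. First, a repairable inaccuracy: your invariance claim is proved from a false premise. While you are still processing $\sigma_i$, a not-yet-processed pseudoline $\sigma_j$ may meet the current curve in many points, so ``$\sigma_j$ meets the current curve in at most one point'' is simply not available; what is true is only that $\sigma_j$ crosses the bigon's boundary an even number of times and crosses the $\sigma_i$-side at most once, so each push changes the count with $\sigma_j$ by an even, nonpositive amount. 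That weaker statement still yields ``odd counts stay odd'' and ``once a pseudoline is down to one crossing it stays there,'' so this part can be patched. The unpatched, fatal point is the existence of a usable bigon. Concretely: suppose the current curve crosses $\sigma_i$ exactly three times, and cutting the projective plane open along $\sigma_i$ puts the three arcs of the curve in series inside the disc, so that there are exactly \emph{two} innermost bigons. Nothing prevents $a$ from lying on the curve-arc of one and $b$ on the curve-arc of the other; then every available bigon push dislodges the curve from a prescribed point, no other simplicity-preserving move exists, and the reduction stalls at three crossings --- already when $\Sigma$ consists of a single pseudoline. Since you allow an \emph{arbitrary} initial curve, such stalled configurations genuinely arise. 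A further small defect in stage one: if $a$ is itself a crossing point of two pseudolines of $\Sigma$, your curve cannot both pass through $a$ and ``avoid all crossing points of $\Sigma$''; the crossings at $a$ must instead be treated as forced, pinned intersections, which is again precisely the bookkeeping your bigon moves must respect and which you did not supply.

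For contrast, the paper sidesteps repair entirely by building the curve correctly the first time: it places small arcs $\alpha$ through $a$ and $\beta$ through $b$ crossing all pseudolines of $\Sigma_a$ (resp.\ $\Sigma_b$) at those points, then applies the dual-path lemma (Lemma \ref{lm:dualPaths}) twice, once in each of the two complementary faces of $\Sigma_a\cup\Sigma_b$ determined by the ends of $\alpha$ and $\beta$. Each dual path crosses every pseudoline at most once, so the assembled closed curve crosses each member of $\Sigma$ at most twice, and the same parity observation you invoke (a non-contractible curve meets each pseudoline an odd number of times) upgrades ``at most twice'' to ``exactly once.'' So your homological endgame coincides with the paper's, but the paper's constructive front end makes surgery near $a$ and $b$ unnecessary. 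To salvage your version you would need either to choose the initial curve already taut near the marked points (at which point you are essentially redoing the paper's construction) or to prove a pinned-bigon removal lemma for curves with two marked points --- the step your proposal explicitly leaves open.
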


The principal ingredients in all our arguments are two considerations of the facial structure of an arrangement of pseudolines.   In fact, we need something slightly more general.  An {\em arrangement of arcs\/} is a finite set $\Sigma$ of open arcs in the plane $\mathbb R^2$ such that, for every $\sigma\in\Sigma$, $\rtwo\setminus \sigma$ is not connected and any two elements of $\Sigma$ have at most one point in common, which must be a crossing.   Thus, two arcs in an arrangement of arcs may have no intersection and so be ``parallel".

Let $\Sigma$ be an arrangement of arcs.  Set $\mathcal P(\Sigma)$ to be the set $\bigcup_{\sigma\in\Sigma}\sigma$ of points in the plane.   A {\em face\/} of $\Sigma$ is a component of $\mathbb R^2\setminus \mathcal P(\Sigma)$.
Since $\Sigma$ is finite, there are only finitely many faces of $\Sigma$.

The {\em dual\/} $\Sigma^*$ of $\Sigma$ is the finite graph whose vertices are the faces of $\Sigma$ and there is one edge for each segment $\alpha$ of each $\sigma\in \Sigma$ such that $\alpha$ is one of the components of $\sigma\setminus \mathcal P(\Sigma\setminus\{\sigma\})$.  The dual edge corresponding to $\alpha$ joins the faces of $\Sigma$ on either side of $\alpha$.

Levi's Lemma is a consequence of  our first consideration  of the facial structure of an arrangement of arcs.

\begin{lemma}[Existence of dual paths]\label{lm:dualPaths}  Let $\Sigma$ be an arrangement of arcs and let $a,b$ be points of the plane not in any line in $\Sigma$.  Then there is an $ab$-path in $\Sigma^*$ crossing each arc in $\Sigma$ at most once.
\end{lemma}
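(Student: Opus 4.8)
The plan is to pass from dual paths to curves in the plane and to find the desired path as a \emph{shortest} one. Write $F_a$ and $F_b$ for the faces of $\Sigma$ containing $a$ and $b$. Each edge of $\Sigma^*$ corresponds to a segment $\alpha$ of some arc, and a generic simple curve from $a$ to $b$ (transversal to the arcs and avoiding the finitely many crossings of $\Sigma$) meets a finite sequence of such segments, thereby tracing an $F_aF_b$-walk in $\Sigma^*$ whose length equals the number of points in which the curve meets $\mathcal P(\Sigma)$; conversely every walk is realized by such a curve. First I would take an $F_aF_b$-walk $W$ of minimum length. A minimum-length walk between two vertices of a graph repeats no vertex, so $W$ is automatically a path; it then remains only to show that $W$ crosses each arc of $\Sigma$ at most once.

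Realize $W$ by a simple generic curve $\gamma$, and suppose for contradiction that $\gamma$ crosses some arc $\sigma$ at least twice. Let $r_1,r_2$ be two crossings consecutive among the crossings of $\gamma$ with $\sigma$. Since $\mathbb{R}^2\setminus\sigma$ has two components (the two sides of $\sigma$) and $\gamma$ meets $\sigma$ nowhere strictly between $r_1$ and $r_2$, the subarc $\gamma'$ of $\gamma$ from $r_1$ to $r_2$ lies, apart from its endpoints, in a single side of $\sigma$. Writing $\sigma'$ for the subarc of $\sigma$ between $r_1$ and $r_2$, the union $C:=\gamma'\cup\sigma'$ is a simple closed curve and so bounds a disc. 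I would then replace $\gamma'$ by a curve $\gamma''$ that follows $\sigma'$ closely but on the opposite side of $\sigma$, leaving the rest of $\gamma$ unchanged. Because $\gamma$ approaches $r_1$ and leaves $r_2$ on that opposite side, the new curve no longer crosses $\sigma$ near $r_1$ or $r_2$, and in fact has exactly two fewer crossings with $\sigma$ than $\gamma$.

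The remaining task, and the crux, is to control the crossings with the other arcs. Fix an arc $\tau\neq\sigma$; by the arrangement hypothesis $\tau$ meets $\sigma$, hence the subarc $\sigma'$, in at most one point, so $|\tau\cap\sigma'|\in\{0,1\}$. Since $\tau$ is a proper arc whose two ends lie outside the bounded disc bounded by $C$, it crosses $C=\gamma'\cup\sigma'$ an even number of times, so $|\tau\cap\gamma'|$ and $|\tau\cap\sigma'|$ have the same parity. Choosing $\gamma''$ inside a sufficiently small one-sided neighbourhood of $\sigma'$ gives $|\tau\cap\gamma''|=|\tau\cap\sigma'|$, and the parity statement then yields $|\tau\cap\gamma''|=|\tau\cap\sigma'|\le|\tau\cap\gamma'|$ for every $\tau\neq\sigma$. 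Summing over all arcs, the reroute leaves the number of crossings with each $\tau\neq\sigma$ unchanged or smaller while strictly decreasing (by two) the crossings with $\sigma$; the walk traced by the new curve is therefore strictly shorter than $W$, contradicting minimality. I expect the main obstacle to be precisely this last bookkeeping: justifying, through a careful general-position choice of $\gamma''$, that pushing $\gamma'$ across $\sigma'$ introduces no crossings with other arcs beyond those forced by $\sigma'$ itself, so that the parity count indeed delivers a genuine decrease.
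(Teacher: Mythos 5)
Your proof is correct, but it takes a genuinely different route from the paper's. The paper argues by induction on the number of arcs separating $a$ from $b$: it picks an arc $\sigma$ incident with the face $F_a$ that separates $a$ from $b$, crosses it once, restricts the arrangement to the side of $\sigma$ containing $b$, and recurses — a greedy, constructive peeling argument that takes about six lines and directly feeds the algorithmic flavour of the rest of the paper. You instead use an extremal argument: take a minimum-length $F_aF_b$-walk (automatically a path), realize it as a generic simple curve, and uncross a doubly-crossed arc $\sigma$ by rerouting along the subarc $\sigma'$ on the opposite side, with the Jordan-parity count $|\tau\cap\gamma'|\equiv|\tau\cap\sigma'|\pmod 2$ and $|\tau\cap\sigma'|\le 1$ (this is exactly where the hypothesis that distinct arcs cross at most once enters) guaranteeing a strict decrease in total crossings. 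The step you flag as the crux — choosing $\gamma''$ in a one-sided neighbourhood of $\sigma'$ so that $|\tau\cap\gamma''|=|\tau\cap\sigma'|$ — is fine at the paper's level of rigor: since the arcs are properly embedded and $\tau\cap\sigma'$ is at most a single transversal crossing, a compactness argument shows $\tau$ meets a sufficiently thin one-sided tube around $\sigma'$ in at most one strand, and $\gamma''$ can be threaded through the finitely many such strands crossing each once; note also that the rerouted curve need not remain simple, which costs nothing since only crossings with $\mathcal P(\Sigma)$ count toward walk length and the contradiction with minimality is all you need. Your approach buys robustness — it shows \emph{every} shortest dual walk has the desired property, and with the parity observation it even identifies the minimum length as the number of arcs separating $a$ from $b$ — while the paper's induction buys brevity and an explicit construction.
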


\begin{cproof}
We proceed by induction on the number of curves in $\Sigma$ that separate $a$ from $b$, the result being trivial if there are none.  Otherwise, for $x\in \{a,b\}$,  let $F_x$ be the face of $\Sigma$ containing $x$ and let $\sigma\in\Sigma$ be incident with $F_a$ and separate $a$ from $b$.  Then $\Sigma^*$ has an edge $F_aF$ that crosses $\sigma$.  

Let $R$ be the region of $\rtwo\setminus \sigma$ that contains $F_b$ and let $\Sigma'$ be the set $\{\sigma'\cap R\mid \sigma'\in \Sigma,\ \sigma'\cap R\ne\varnothing\}$.  The induction implies there is an $FF_b$-path in $\Sigma'{}^*$.  Together with $F_aF$, we have an $F_aF_b$-path in $\Sigma^*$, as required.
\end{cproof}

We now turn to the proof of Levi's Lemma.

\begin{cproofof}{Theorem \ref{th:levy}}  In this proof, we view the pseudoline arrangement $\Sigma$ as  non-contractible simple closed curves in the real projective plane, any two intersecting exactly once. 

If $a$ is not in any arc in $\Sigma$, then let $F$ be the face of $\Sigma$ containing $a$; replace $a$ with any point in the boundary of $F$ and not in the intersection of two arcs in $\Sigma$.   Likewise, for $b$.  In all cases, the points representing $a$ and $b$ are chosen to be in different arcs in $\Sigma$.

If we find the required $ab$-arc $\sigma$ to extend $\Sigma$ using one or two replacement points, then $\sigma$ goes through the face(s) of $\Sigma$ containing the original point(s), and so we may reroute $\sigma$ to go through the original points, as required.  Thus, we may assume $a$ and $b$ are both in arcs in $\Sigma$.

Let $\Sigma_a$ consist of the arcs in $\Sigma$ containing $a$ and let $F^{(a)}_b$ be the face of $\mathcal P(\Sigma_a)$ containing $b$.  Up to spherical homeomorphisms, there is a unique small arc $\alpha$ through $a$ that has one end in $F^{(a)}_b$ and crosses all the arcs in $\Sigma_a$ at $a$.  The ends of this arc are the two points $a',a''$.    In a similar way, we get the small arc $\beta$ through $b$ joining the two points $b',b''$.  

The choices for $\alpha$ and $\beta$ show that we may label $a',a''$ and $b',b''$ so that $a'$ and $b'$ are in the same face $F'$ of $\Sigma_a\cup \Sigma_b$.    We apply Lemma \ref{lm:dualPaths} to this component to obtain an $a'b'$-arc $\gamma'$ contained in $F'$.

The arc composed of  $\gamma'$ together with the little arcs $\alpha$ and $\beta$ crosses every arc in $\Sigma_a\cup \Sigma_b$ exactly once.  This shows that $a''$ and $b''$ are  in the same face $F''$ of $\Sigma_a\cup \Sigma_b$.

Let $\Sigma''$ be the set $\{\sigma\cap F''\mid \sigma\in \Sigma\, \sigma\cap F''\ne\varnothing\}$.  Lemma \ref{lm:dualPaths} implies there is an $a''b''$-arc $\gamma''$ in $F''$ crossing each element of $\Sigma''$ at most once.

Let $\gamma$ be the closed curve $\gamma'\cup \alpha\cup\gamma''\cup \beta$, adjusted as necessary  near $a'$, $a''$, $b'$, and $b''$ so that $\gamma$ is a  simple closed curve.  It is clear that $\gamma$ crosses each arc in $\Sigma_a\cup \Sigma_b$ exactly once and, therefore, is non-contractible.  By construction, $\gamma$ crosses any arc in $\Sigma$ at most twice; both being non-contractible implies this is in fact at most once.  Therefore, $\Sigma\cup \{\gamma\}$ is the desired arrangement of pseudolines.\end{cproofof}

\section{Proof of Theorem \ref{th:strongConvex}}\label{sec:pseudolinear}

In this section we prove Theorem \ref{th:strongConvex}:  a face-convex drawing of $K_n$ in the sphere with outer face $F$ is a pseudolinear drawing in the plane by making $F$ the infinite face.

It is evident that  face-convexity is inherited in the sense that if $D$ is a face-convex drawing of $K_n$ and $v$ is any vertex of $K_n$, then $D[K_n-v]$ is a face-convex drawing of $K_n-v$.  We begin with a simple observation.

\begin{lemma}\label{lm:noBadK4}
Let $D$ be a face-convex drawing of $K_n$ with outer face $F$.  If $J$ is any $K_4$ in $K_n$ such that $D[J]$ has a crossing, then $F$ is in the face of $D[J]$ bounded by a 4-cycle of $J$.  In particular, no crossing of $D$ is incident with $F$, so $F$ is bounded by a cycle in $K_n$.
\end{lemma}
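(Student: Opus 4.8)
The plan is to apply face-convexity to $J$ one triangle at a time. First I would normalize the picture: since $D[J]$ has a crossing and $D$ is a good drawing, two independent edges of $J$ cross, so I may label the vertices $a,b,c,d$ so that the crossing is the point where $ac$ meets $bd$. The remaining edges $ab,bc,cd,da$ then form a $4$-cycle, and $D[J]$ has five faces: four ``triangular'' faces each incident with the crossing, together with the face $Q$ bounded by the $4$-cycle, which is incident with no crossing. Each face of $D$ lies inside a unique face of $D[J]$, so the whole statement reduces to showing that the outer face $F$ satisfies $F\subseteq Q$.

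The key step I would establish is that, for each $3$-cycle of $J$, the closed disc lying on the side of the \emph{opposite} vertex cannot be convex, so face-convexity forces $F$ to lie on that side. I would run this for $T=abc$: the curve $D[T]$ bounds two closed discs, and since the edge $bd$ crosses the side $ac$, the vertex $d$ lies in the interior of exactly one of them, say $\Delta$. Now $b\in D[T]\subseteq\Delta$ and $d\in\operatorname{int}\Delta$, yet the edge $bd$ crosses $ac\subseteq\partial\Delta$ and so leaves $\Delta$; thus the pair $b,d$ witnesses that $\Delta$ is not convex. By Definition \ref{df:convex}, the disc bounded by $D[abc]$ \emph{not} containing $F$ is convex, hence cannot be $\Delta$, and therefore $F$ lies on the same side of $D[abc]$ as $d$. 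Applying the identical argument to $abd$, $acd$, and $bcd$ (each has a side crossed by the complementary edge) places $F$ on the side of $D[abd]$ containing $c$, of $D[acd]$ containing $b$, and of $D[bcd]$ containing $a$.

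Next I would combine these four constraints. The diagonal $ac$ cuts the disc bounded by the $4$-cycle into the triangles $abc$ and $acd$, while $bd$ cuts it into $abd$ and $bcd$; hence the four closed triangles of $J$ together cover the entire closed disc bounded by the $4$-cycle, so the only region avoiding all four triangles is $Q$. Since $F$ is an open face lying in each ``opposite-vertex'' disc, it is disjoint from every one of the four closed triangles, and therefore $F\subseteq Q$, which is the first assertion. For the ``in particular'' clause I would argue locally: if a crossing $p$ of $D$ were incident with $F$, then in the good drawing $p$ is the crossing of two independent edges whose four endpoints induce a $K_4$, say $J'$, having a crossing at $p$; the part just proved gives $F\subseteq Q'$ for the $4$-cycle face $Q'$ of $D[J']$, whence $\overline F\subseteq\overline{Q'}$. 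But $p$ lies in the interior of the region bounded by that $4$-cycle, so $p\notin\overline{Q'}$, contradicting $p\in\partial F$. Thus no crossing is incident with $F$, so $\partial F$ is a subgraph of $K_n$ and, being the boundary of a face, a cycle.

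The main obstacle is the middle step: pinning down which of the two discs is non-convex for each triangle and verifying that face-convexity forces $F$ into the \emph{complementary} disc in each case. Once the crossing edge is seen to certify non-convexity of the opposite-vertex disc, the rest is careful topological bookkeeping—checking that the four half-disc constraints intersect exactly in $Q$, and that the face containment $F\subseteq Q$ transfers the no-crossing conclusion via the local picture at $p$.
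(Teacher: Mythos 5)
Your proposal is correct and follows essentially the same route as the paper: for each triangle of the crossing $K_4$, the crossing edge (with one end on the triangle and the other the fourth vertex) witnesses non-convexity of the disc containing the fourth vertex, so face-convexity forces $F$ to the fourth vertex's side, and the four constraints together pin $F$ in the quadrilateral face; your local argument at a crossing $p$ for the ``in particular'' clause is the same observation the paper makes implicitly. The only step you state more casually than the paper is the very last one---that the boundary of $F$ is a cycle is not automatic for a face of a plane graph but requires the $2$-connectivity of the planarization (obtained by inserting a vertex at each crossing), which the paper invokes explicitly, though also without proof.
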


\begin{cproof}
Let $v,w,x,y$ be the four vertices of $J$ labelled so that $vw$ crosses $xy$ in $D$.  Consider, for example, the triangle $T=(v,w,x,v)$.   {The vertex $y$ is in a closed face $F_y$ of $D[T]$.  Since $xy$ crosses $vw$, $D[xy]$ is not contained in $F_y$, so $F_y$ is not convex.  Since $D$ is face-convex, it follows that $F\subseteq F_y$.}   

Thus, none of $v,w,x,y$ is on the convex side of the triangle containing the other three vertices.  It follows that $F$ is contained in the face of $D[J]$ bounded by the 4-cycle $(v,x,w,y,v)$, as required.  

Inserting a vertex at every crossing point of $D$ produces a 2-connected planar embedding of the resulting graph having $F$ as a face.  This face is bounded by a cycle; since no inserted vertex is incident with $F$, this cycle is a cycle of $K_n$.
\end{cproof}

We remark that Lemma \ref{lm:noBadK4} shows that a face-convex drawing does not have the forbidden configuration of Aichholzer et al \cite{ahpsv}.    The converse is no harder.

\change{For a face-convex drawing $D$ of $K_n$ with outer face $F$, let $C_F$ denote the cycle of $K_n$ bounding  $F$} and let $\Delta_F$ denote the closed disc bounded by $C_F$ and disjoint from $F$.  For any subset $W$ of vertices of $K_n$,  let $D[W]$ denote the subdrawing of $D$ induced by the complete subgraph having precisely the vertices in $W$.   Since $D[W]$ is a face-convex drawing, if $|W|\ge 3$, then its face $F_W$ containing $F$ is bounded by a cycle $C_W$.  The closed disc $\Delta_W$ bounded by $D[C_W]$ and disjoint from $F$ is the {\em convex hull of $W$\/}.   

\ignore{Replacing $W$ with $W\cup \{x,y\}$ is the heart of the simple proof of the following observation.  

\begin{lemma}\label{lm:convexHull}
Let $D$ be a face-convex drawing of $K_n$ with outer face $F$ and let $W$ be a subset of $V(K_n)$.  If $|W|\ge 3$ and $x$ any $y$ are any vertices in of $K_n$ in $\Delta_W$, then $D[xy]\subseteq \Delta_W$.      \hfill\eop
\end{lemma}
}

For each edge $uv$ of $G$, $D[uv]$ is a simple arc in the sphere.  Arbitrarily giving $D[uv]$ a direction distinguishes a left and right side to the arc $D[uv]$.  We prefer not to use the labels `left' and `right', as we find them somewhat confusing. %until later, when there is a reason to orient $D[uv]$.  
For now, we shall arbitrarily label them as {\em side 1\/} and {\em side 2\/}  of $uv$.

For each vertex $w$ different from $u$ and $v$, {\em $w$ is on side $i$ of $uv$\/} if the face of $D[\{u,v,w\}]$ disjoint from $F$ is on side $i$ of $uv$.   We set $\Side{uv}i$ to be the set of vertices on side $i$ of $uv$; \change{for convenience, we} include $u$ and $v$ in $\Side{uv}i$.

It is clear that $\Side{uv}1\cap \Side{uv}2=\{u,v\}$.  What is less clear is that $D[\Side{uv}1]\cap D[\Side{uv}2]$ consists just of $u$, $v$, and $uv$.  The next lemma is a useful step in proving this.

\begin{lemma}\label{lm:K4K5sameSides}
Let $D$ be a face-convex drawing of $K_n$ \change
{with outer face $F$}  and let $u,v,x,y$ be distinct vertices of $K_n$.  
\begin{enumerate}[label={\bf(\ref{lm:K4K5sameSides}.\arabic*)},ref=(\ref{lm:K4K5sameSides}.\arabic*),leftmargin=60pt]
\item\label{it:K4sameSide} Then $x$ and $y$ are on the same side of $uv$ if and only if $uv$ is incident with $F_{\{u,v,x,y\}}$. 
\item\label{it:differentSidesNoCrossing}  In particular, if $x$ and $y$ are on different sides of $uv$, then $D[\{u,v,x,y\}]-xy$ has no crossing.
\item\label{it:interiorDifferentSides} If $z$ is any vertex such that $u$ is in the interior of $\Delta_{\{x,y,z\}}$, then some two of $x$, $y$, and $z$ are on different sides of $uv$.  
\end{enumerate}
\end{lemma}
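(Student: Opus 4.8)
The plan is to reduce each part to the inspection of a single $K_4$ (or of a $K_4$ with an interior vertex), using the standard fact that a good drawing of $K_4$ has either no crossing or exactly one crossing, together with Lemma~\ref{lm:noBadK4} to locate $F_J$. The device I would set up first is a rule for reading the side of $uv$ off the drawing near $u$. Write $\Delta_{uvw}$ for the closed disc bounded by the triangle $D[\{u,v,w\}]$ and disjoint from $F$; by definition $w$ is on side $i$ of $uv$ exactly when $\Delta_{uvw}$ meets the open edge $D[uv]$ from side $i$. The key point is that, of the two discs bounded by $D[\{u,v,w\}]$, the one that is $\Delta_{uvw}$ is the one \emph{not} containing $F$. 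Consequently, if some face of $D[J]$ incident with $uv$ contains $F$ on (say) side $2$, then for every $w$ the side-$2$ disc of $D[\{u,v,w\}]$ contains that face, hence contains $F$, so $\Delta_{uvw}$ is the side-$1$ disc and $w$ lies on side $1$.

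For \ref{it:K4sameSide} put $J=\{u,v,x,y\}$. If $D[J]$ has no crossing it is a planar $K_4$, so the face $F_J$ containing $F$ is bounded by a $3$-cycle and the remaining vertex lies in the interior of $\Delta_J$; if $D[J]$ has a crossing then Lemma~\ref{lm:noBadK4} makes $F_J$ the quadrilateral face, so $C_J$ is the $4$-cycle through the four non-crossing edges. In both situations $uv$ is incident with $F_J$ precisely when $uv\in C_J$, that is, when $uv$ is neither a spoke to an interior vertex (planar case) nor the diagonal crossing $xy$ (crossing case). If $uv\in C_J$ then $F_J$ meets $uv$ from one side, and the rule above places both $x$ and $y$ on the opposite side, so they are on the same side. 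If $uv\notin C_J$, inspection of the two remaining configurations (planar with $uv$ a spoke, or crossing with $uv$ the $xy$-diagonal) shows $\Delta_{uvx}$ and $\Delta_{uvy}$ to be the two $F$-free wedges meeting $uv$ from its two sides, so $x$ and $y$ are on opposite sides. This proves \ref{it:K4sameSide}, and \ref{it:differentSidesNoCrossing} is the same case split read off: $x$ and $y$ on different sides forces one of the two ``$uv\notin C_J$'' configurations, in which $D[J]$ is either crossing-free or has its only crossing on $xy$; in either case $D[J]-xy$ has no crossing.

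For \ref{it:interiorDifferentSides} I would first turn the hypothesis into a statement about $D[\{u,x,y,z\}]$. Because $\Delta_{\{x,y,z\}}$ is a convex disc containing $u$ in its interior, convexity gives $D[uw]\subseteq\Delta_{\{x,y,z\}}$ for each $w\in\{x,y,z\}$, so $F$ lies in the face of $D[\{u,x,y,z\}]$ outside $\Delta_{\{x,y,z\}}$, whose boundary is the $3$-cycle $xyz$; by Lemma~\ref{lm:noBadK4} this drawing has no crossing and $u$ is its interior vertex. The three faces $\Delta_{uxy},\Delta_{uyz},\Delta_{uzx}$ are disjoint from $F$ and partition a neighbourhood of $u$, so the edge $D[uv]$ leaves $u$ into exactly one of them, say into the face bounded at $u$ by $u\alpha$ and $u\beta$ with $\{\alpha,\beta\}\subseteq\{x,y,z\}$. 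Inside that $F$-free face the germ of $uv$ splits it into its two sub-wedges at $u$, one bounded by $u\alpha$ and one by $u\beta$; both sub-wedges avoid $F$, hence are exactly $\Delta_{uv\alpha}$ and $\Delta_{uv\beta}$, and they lie on the two sides of $uv$. Thus $\alpha$ and $\beta$ are on different sides of $uv$, as required.

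The step I expect to carry the weight is the side-reading rule of the first paragraph. One has to be careful that ``side of $uv$'' is meaningful only locally along the open edge (an arc does not separate the sphere), and that $\Delta_{uvw}$ may run the ``long way round'' from $uv$ to $uw$ when $F$ lies on the short side; getting this right is precisely what makes the bookkeeping in \ref{it:K4sameSide} and \ref{it:interiorDifferentSides} correct, and it is the reason the argument keys on which wedge avoids $F$ rather than on the naive clockwise/counterclockwise neighbour of $uv$. Once that rule is in place, everything reduces to the finite inspection of the at most two drawings of $K_4$, with Lemma~\ref{lm:noBadK4} locating $F_J$ whenever there is a crossing.
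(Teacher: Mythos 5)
Your overall route is the paper's own: you prove \ref{it:K4sameSide} and \ref{it:differentSidesNoCrossing} by finite inspection of the two good drawings of $K_4$, with Lemma \ref{lm:noBadK4} pinning $F_{\{u,v,x,y\}}$ to the quadrilateral face in the crossing case, and you prove \ref{it:interiorDifferentSides} by reducing to the planar $K_4$ on $\{u,x,y,z\}$ with $u$ as interior vertex (the paper does the same reduction, then splits on the position of $v$: outside $\Delta_{\{x,y,z\}}$, equal to $z$, or inside some $\Delta_{\{u,x,y\}}$, feeding each case back into \ref{it:K4sameSide}). Your side-reading rule is a correct and useful formalization of the paper's ``draw the two drawings and check,'' and your sphere-intrinsic reading of ``spoke'' (the vertex off $C_J$ joined into $\Delta_J$) makes the case analysis in \ref{it:K4sameSide} exhaustive; \ref{it:differentSidesNoCrossing} then reads off exactly as in the paper.

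Two steps in your proof of \ref{it:interiorDifferentSides} need repair. First, $z$ is ``any vertex,'' so $z=v$ is allowed, and the paper handles it separately; in that case $uv=uz$ is a spoke of the planar $K_4$, so ``the edge $D[uv]$ leaves $u$ into exactly one of them'' fails --- the germ of $uv$ lies on the common boundary of two of the three inner faces. (The fix is one line: those two faces are precisely $\Delta_{\{u,v,x\}}$ and $\Delta_{\{u,v,y\}}$, on the two sides of $uv$, so $x$ and $y$ differ --- but the case must be noticed.) Second, the assertion that the two sub-wedges ``are exactly $\Delta_{\{u,v,\alpha\}}$ and $\Delta_{\{u,v,\beta\}}$'' is not literally correct in either remaining situation. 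If $v\notin\Delta_{\{x,y,z\}}$, the edge $uv$ exits the face through $\alpha\beta$, and the sub-wedges are bounded in part by a segment of $\alpha\beta$ rather than by $v\alpha$ and $v\beta$; the clean conclusion there is the paper's: $uv$ crosses $\alpha\beta$, so \ref{it:K4sameSide} applied to the $K_4$ on $\{u,v,\alpha,\beta\}$ puts $\alpha$ and $\beta$ on different sides. If instead $v$ lies inside the face, the arc $D[uv]$ ends at an interior point and does not disconnect the open face at all, so the ``sub-wedges'' exist only as local sectors at $u$; to promote them to the global discs $\Delta_{\{u,v,\alpha\}}$ and $\Delta_{\{u,v,\beta\}}$ you need the edges $v\alpha$ and $v\beta$ to stay in the closed face, which follows from face-convexity applied to the triangle $u\alpha\beta$ (its $F$-free disc is exactly that closed face) --- a step you should make explicit. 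With these patches your argument is complete and essentially coincides with the paper's, your germ-at-$u$ case split being a cosmetic variant of the paper's position-of-$v$ case split.
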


\begin{cproof}  Ultimately, the easiest way to understand \ref{it:K4sameSide} is to draw the two possible drawings of $K_4$ and{, in both cases,}  check the two possibilities:  $uv$ is incident with $F_{\{u,v,x,y\}}$ and $uv$ is not incident with $F_{\{u,v,x,y\}}$.  In the case the $K_4$ has a crossing, $F_{\{u,v,x,y\}}$ is the face bounded by the 4-cycle.

For \ref{it:differentSidesNoCrossing}, let $J$ be the $K_4$ induced by $u,v,x,y$.  Since $x$ and $y$ are on different sides of $uv$, the preceding conclusion shows that either $D[J]$ has no crossing, in which case we are done, or $uv$ is crossed in $D[J]$ and it crosses $xy$.   As this is the only crossing in $D[J]$, $D[J]-xy$ has no crossing.

Finally, we consider \ref{it:interiorDifferentSides}.  If $D[v]\notin \Delta_{\{x,y,z\}}$, then $D[uv]$ crosses the 3-cycle $xyz$.  Now \ref{it:K4sameSide} shows that the ends of the edge crossing $D[uv]$ are on different sides of $uv$.  Thus, we may assume $D[v]\in \Delta_{\{x,y,z\}}$.

Since $D[u]\subseteq \Delta_{\{x,y,z\}}$, $D[\{ux,uy,uz\}]\subseteq \Delta_{\{x,y,z\}}$.  If $v=z$, then $D[uv]$ is not incident with $F_{\{u,v,x,y\}}$.  Therefore, \ref{it:K4sameSide} shows $x$ and $y$ are on different sides of $uv$ and consequently, we may assume $v\ne z$.

By definition, $\Delta_{\{u,x,y\}}\subseteq \Delta_{\{x,y,z\}}$, and likewise for $\Delta_{\{u,x,z\}}$ and $\Delta_{\{u,y,z\}}$.   We may choose the labelling of $x$, $y$, and $z$ so that $D[v]\in \Delta_{\{u,x,y\}}$.  But now $uv$ is not in the boundary of $D[\{u,v,x,y\}]$.  Again, \ref{it:K4sameSide} shows $x$ and $y$ are on different sides of $uv$.
\end{cproof}

We are now ready for the first significant step, which is Item \ref{it:noDiscIntersection} in our next result.

\begin{lemma}\label{lm:basicProperties}
Let $D$ be a face-convex drawing of $K_n$ \change{with outer face $F$}, let $W\subseteq V(K_n)$, and let $uv$ be any edge of $K_n$.  
\begin{enumerate}[label={\bf(\ref{lm:basicProperties}.\arabic*)},ref=(\ref{lm:basicProperties}.\arabic*),leftmargin=60pt]
\item\label{it:emptySide}  Both $(W\cap \Side{uv}1)\setminus\{u,v\}$ and $(W\cap \Side{uv}2)\setminus\{u,v\}$ are not empty if and only if $uv$ is not incident with $F_{W\cup \{u,v\}}$.  \item\label{it:oppSideNotInDelta}  For $\{i,j\}=\{1,2\}$,  no vertex of $ \Side{uv}i\setminus\{u,v\}$ is in  $\Delta_{(W\cup \{u,v\})\cap \Side{uv}j}$.
\item\label{it:noSidesCrossings} If, for $i=1,2$, $x_i,y_i\in\Side{uv}i$, then $x_1y_1$ does not cross $x_2y_2$ in $D$.  
\item\label{it:noDiscIntersection} $\Delta_{\Side{uv}1}\cap \Delta_{\Side{uv}2}=D[\{u,v\}]$.
\end{enumerate} 
\end{lemma}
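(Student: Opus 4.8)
The plan is to prove the four items in the stated order, using Lemma~\ref{lm:K4K5sameSides} throughout, after first isolating one geometric fact that does most of the work. Call it the \emph{one-sided hull} fact: for any $z,w\in\Side{uv}{1}$, the arc $D[zw]$ lies in $\Delta_{\{u,v,z,w\}}$, which in turn lies on the closed side $1$ of $uv$ and meets $D[uv]$ only in $\{u,v\}\cap\{z,w\}$; in particular $D[zw]$ does not cross $D[uv]$. When one of $z,w$ is $u$ or $v$ this is immediate, since then $D[zw]$ is a boundary edge of $\Delta_{\{u,v,z\}}$ (or $\Delta_{\{u,v,w\}}$), which lies on side $1$ by the definition of ``side''. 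When $z,w\notin\{u,v\}$, item~\ref{it:K4sameSide} of Lemma~\ref{lm:K4K5sameSides} gives that $uv$ is incident with $F_{\{u,v,z,w\}}$, so $uv$ is an edge of $C_{\{u,v,z,w\}}$ and $\Delta_{\{u,v,z,w\}}$ lies on side $1$; the convex hull property then puts $D[zw]$ inside it.

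For item~\ref{it:emptySide}, write $S=W\cup\{u,v\}$. The forward direction uses the monotonicity observation that an edge incident with the outer face of a subdrawing stays incident with the outer face after deleting vertices (deleting vertices only enlarges the face containing $F$). Thus if both sides are occupied, choosing $x$ on side $1$ and $y$ on side $2$ and applying item~\ref{it:K4sameSide} to the $K_4$ on $\{u,v,x,y\}$ shows $uv$ is not incident with $F_{\{u,v,x,y\}}$, hence not with $F_S$. Conversely, if no vertex of $W\setminus\{u,v\}$ lies on side $2$, then the one-sided hull fact puts all of $D[S]$ on the closed side $1$, so $\Delta_S$ is contained there and the open side-$2$ strip along $D[uv]$ lies in the complement of $\Delta_S$, which is contained in $F_S$; hence $uv$ is incident with $F_S$. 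Item~\ref{it:oppSideNotInDelta} then follows formally: if a side-$i$ vertex $w$ lay in the side-$j$ hull $\Delta_{(W\cup\{u,v\})\cap\Side{uv}{j}}$, adding $w$ would not change that hull, so $uv$ would remain incident with its outer face, contradicting item~\ref{it:emptySide} applied to the now two-sided set.

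The crux is item~\ref{it:noSidesCrossings}, which I would prove in three layers of increasing generality by a ``forced extra crossing'' bootstrap. \emph{Layer $0$:} an edge from $\{u,v\}$ to a side-$1$ vertex cannot cross an edge from $\{u,v\}$ to a side-$2$ vertex; this is immediate from item~\ref{it:differentSidesNoCrossing}, since the two relevant vertices lie on different sides of $uv$. \emph{Layer $1$:} an edge $ua$ with $a$ on side $1$ cannot cross an edge $cd$ with $c,d$ on side $2$. If it did, then $cd$ lies in $\Delta_{\{u,v,c,d\}}$ while $a$ lies outside it (item~\ref{it:oppSideNotInDelta}) and $ua$ leaves $u$ on side $1$, hence outside this hull; so $ua$ is forced to cross the boundary cycle again after meeting $cd$. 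Since $uv\in C_{\{u,v,c,d\}}$ (item~\ref{it:emptySide}) is not crossed and the edges at $u$ cannot be crossed, this further crossing is with $vc$ or $vd$, contradicting Layer~$0$. \emph{Layer $2$:} the general case $ab$ versus $cd$ is identical, the forced extra crossing now being with an edge from $\{u,v\}$ to a side-$2$ vertex, which is impossible by Layer~$1$ (and its side-swapped form). In each layer the no-crossing-with-$uv$ input is the one-sided hull fact.

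Finally, item~\ref{it:noDiscIntersection} combines everything. The inclusion $D[\{u,v\}]\subseteq\Delta_{\Side{uv}{1}}\cap\Delta_{\Side{uv}{2}}$ holds because, within each $\Side{uv}{i}$, the opposite side is unoccupied, so item~\ref{it:emptySide} puts $uv$ on both boundary cycles $C_{\Side{uv}{i}}$. For the reverse inclusion, the two boundary cycles share exactly the edge $uv$, and by item~\ref{it:noSidesCrossings} (together with the Layer~$0$/$1$ cases and the one-sided hull fact for $uv$) none of their remaining edges cross; two simple closed curves sharing an arc and otherwise disjoint bound discs that are either nested or meet only along that arc, and nesting is excluded by item~\ref{it:oppSideNotInDelta}. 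I expect the main obstacle to be exactly this bookkeeping: making the local notion of ``side'' interact correctly with the global topology of the discs (that an edge with both ends outside a hull but with an interior point inside must recross the boundary cycle, and that the two hulls genuinely lie on opposite sides of $uv$). The three-layer bootstrap for item~\ref{it:noSidesCrossings} is where the real content sits, since it is the only step that is not a direct appeal to Lemma~\ref{lm:K4K5sameSides}.
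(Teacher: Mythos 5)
You have the right architecture for items \ref{it:emptySide}, \ref{it:oppSideNotInDelta} and \ref{it:noDiscIntersection} (they essentially coincide with the paper's proofs), and your three-layer treatment of \ref{it:noSidesCrossings}, organized around the hull $\Delta_{\{u,v,c,d\}}$ of the side-2 quadruple, is a reasonable variant of the paper's argument, which instead works with the outer face $F_{J_1}$ of the side-1 quadruple. But there are two genuine gaps. The first is in the converse direction of \ref{it:emptySide}, on which everything else in your development rests. You argue that the ``open side-2 strip along $D[uv]$'' lies in the complement of $\Delta_S$ and hence in $F_S$. Sidedness, however, is only defined locally along the arc $D[uv]$ (the arc alone does not separate the sphere), and your one-sided hull fact only controls \emph{crossings}: it shows the strip lies in \emph{some} face of $D[S]$, but identifying that face with $F_S$ is exactly the assertion $uv\in E(C_S)$ you are trying to prove. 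Nothing you establish rules out $uv$ being \emph{enclosed}: for instance $u$ lying in the interior of $\Delta_{\{x,y,z\}}$ for three same-side vertices $x,y,z$, in which case the strip's face is an interior face of $D[S]$. Excluding enclosure is precisely the role of \ref{it:interiorDifferentSides}, which you never invoke; the paper uses it to place $u$ and $v$ on $C_{W\cup\{u,v\}}$, and then needs a further disconnection argument (if $uv\notin E(C_{W\cup\{u,v\}})$, pick $x,y$ in different components of $C_{W\cup\{u,v\}}-\{u,v\}$; the edge $xy$ then crosses $uv$, putting $x,y$ on different sides) to upgrade vertex-membership to edge-membership. Neither step is subsumed by your strip argument.

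The second gap is in Layer 1, at the step ``$ua$ leaves $u$ on side $1$, hence outside this hull.'' Being on side $1$ is a property of the vertex $a$, not of the initial direction of $D[ua]$ at $u$, so this needs proof; and the case where it matters is real. By \ref{it:K4sameSide} and \ref{it:emptySide}, when $D[\{u,v,c,d\}]$ has a crossing its hull is bounded by a $4$-cycle through $u$ that \emph{contains the edge $cd$}; if the initial segment of $ua$ enters the hull's interior, then $ua$ can cross the boundary cycle exactly once, at $cd$, and run off to $a$ outside with no forced second crossing --- so the ``forced extra crossing'' conclusion fails in precisely this configuration. The layer is repairable: an initial segment inside the hull lies in $\Delta_{\{u,v,c\}}$ or $\Delta_{\{u,v,d\}}$ (in the crossing case these are the regions of the hull incident with $u$), and since $a$ lies outside that triangle hull by \ref{it:oppSideNotInDelta} while $uv$, $uc$, $ud$ are unavailable, $ua$ must cross $vc$ or $vd$, contradicting your Layer 0; but as written the claim is an unproven local-to-global assertion of the same kind as the first gap, and you correctly sensed in your closing remarks that this is where the difficulty sits.
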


\begin{cproof}  Suppose $uv$ is incident with $F_{W\cup\{u,v\}}$.  For any $x,y\in W\setminus\{u,v\}$, it follows that $uv$ is incident with $F_{\{u,v,x,y\}}$, so Lemma \ref{it:K4sameSide} shows $x$ and $y$ are on the same side of $uv$.

Conversely, suppose all vertices in $W\setminus\{u,v\}$ are on the same side of $uv$.  The closed disc $\Delta_{W\cup\{u,v\}}$ is the union of all the convex sides $\Delta_{\{x,y,z\}}$, for $x,y,z\in W\cup \{u,v\}$.  If $u$ is in the interior of some $\Delta_{\{x,y,z\}}$, then Lemma \ref{it:interiorDifferentSides} shows some two of $x$ and $y$ are on different sides of $uv$.  Thus, both $u$ and $v$ are in $C_{W\cup \{u,v\}}$. If $uv\notin E(C_{W\cup \{u,v\}})$, then $C_{W\cup \{u,v\}}-\{u,v\}$ is not connected; let $x$ and $y$ be in different components of $C_{W\cup \{u,v\}}-\{u,v\}$.  Then $D[xy]$ crosses $D[uv]$, showing $x$ and $y$ are on different sides of $uv$.  This contradiction completes the proof of \ref{it:emptySide}.

For $i=1,2$, let $W_i=(W\cup \{u,v\})\cap \Side{uv}i$. 

{ For \ref{it:oppSideNotInDelta}, suppose $x\in  \Side{uv}i\setminus\{u,v\}$ is in $\Delta_{W_j}$.  Since $W_j\subseteq W_j\cup \{x\}$, $\Delta_{W_j}\subseteq \Delta_{W_j\cup \{x\}}$.    Since $C_{W_j\cup \{x\}}$ either contains $x$, in which case $D[x]\notin \Delta_{W_j}$,  or is $C_{W_j}$, in which case $D[x]$ is in the interior of $\Delta_{W_j}$.  

Assume by way of contradiction that it is the latter case.  Then $C_{W_j\cup \{x\}}=C_{W_j}$.  Therefore, $\Delta_{W_j\cup \{x\}}=\Delta_{W_j}$.  Since $uv$ is in $C_{W_j}$ by \ref{it:emptySide}, the other direction of \ref{it:emptySide} implies the contradiction that $x\in \Side{uv}j$.  }

For \ref{it:noSidesCrossings}, we suppose $x_1y_1$ and $x_2y_2$ cross in $D$.  From \ref{it:differentSidesNoCrossing},  not both $\{x_1,y_1\}$ and $\{x_2,y_2\}$ can contain an element of $\{u,v\}$.  We may choose the labelling so that $\{x_1,y_1\}\cap \{u,v\}=\varnothing$ and let $J_1$ be the $K_4$ induced by $u,v,x_1,y_1$.  Since $\{x_2,y_2\}\ne \{u,v\}$, we may assume $x_2\notin\{u,v\}$.

\begin{claim}\label{cl:y2NotUOrV} $y_2\notin \{u,v\}$.  
\end{claim}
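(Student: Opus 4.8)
The plan is to assume for contradiction that $y_2\in\{u,v\}$ and, using the symmetry that exchanges $u$ and $v$ (which fixes the edge $uv$ and merely relabels the two sides), to treat only the case $y_2=u$. Recall that at this point we already have $\{x_1,y_1\}\cap\{u,v\}=\varnothing$ and $x_2\notin\{u,v\}$; since $\Side{uv}1\cap\Side{uv}2=\{u,v\}$, the five vertices $u,v,x_1,y_1,x_2$ are distinct, with $x_1,y_1\in\Side{uv}1\setminus\{u,v\}$ and $x_2\in\Side{uv}2\setminus\{u,v\}$. Under the assumption $y_2=u$, the hypothesised crossing $x_1y_1\times x_2y_2$ becomes $x_1y_1\times ux_2$, and the goal is to show that this crossing cannot occur.

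First I would record two non-crossings. Since $x_1$ and $x_2$ lie on different sides of $uv$, applying \ref{it:differentSidesNoCrossing} to $\{u,v,x_1,x_2\}$ shows that $D[\{u,v,x_1,x_2\}]-x_1x_2$ has no crossing; as $ux_2$ and $vx_1$ are independent edges of this subdrawing, $ux_2$ does not cross $vx_1$. Applying \ref{it:differentSidesNoCrossing} to $\{u,v,y_1,x_2\}$ in the same way gives that $ux_2$ does not cross $vy_1$. Now consider the simple closed curve $D[(v,x_1,y_1)]$ bounding $\Delta_{\{v,x_1,y_1\}}$. The arc $D[ux_2]$ has its ends $u,x_2$ off this curve and does not pass through $v,x_1,y_1$; it does not cross $D[vx_1]$ or $D[vy_1]$ by the two non-crossings just established; and it crosses $D[x_1y_1]$ exactly once (the assumed crossing). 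Hence $D[ux_2]$ meets this curve in a single transversal point, so by the Jordan curve theorem its endpoints $u$ and $x_2$ lie in different components of the complement; that is, exactly one of $u,x_2$ lies in the interior of $\Delta_{\{v,x_1,y_1\}}$.

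To finish, I would locate $x_2$. Taking $W=\{x_1,y_1\}$, the set $(W\cup\{u,v\})\cap\Side{uv}1$ equals $\{u,v,x_1,y_1\}$, so \ref{it:oppSideNotInDelta} (with $i=2,\ j=1$) gives $x_2\notin\Delta_{\{u,v,x_1,y_1\}}$; since $\{v,x_1,y_1\}\subseteq\{u,v,x_1,y_1\}$ and convex hulls are monotone ($\Delta_{W'}\subseteq\Delta_W$ for $W'\subseteq W$), we get $x_2\notin\Delta_{\{v,x_1,y_1\}}$. Therefore the interior vertex must be $u$, i.e.\ $u$ lies in the interior of $\Delta_{\{x_1,y_1,v\}}$. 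Then \ref{it:interiorDifferentSides}, applied with the triple $\{x_1,y_1,v\}$ and $z=v$, forces $x_1$ and $y_1$ to lie on different sides of $uv$, contradicting $x_1,y_1\in\Side{uv}1$. This contradiction proves $y_2\notin\{u,v\}$. The step I expect to be the crux is the Jordan-curve/parity argument: one must verify that $D[ux_2]$ meets the triangle $D[(v,x_1,y_1)]$ in exactly one transversal point, which is precisely what the two non-crossings from \ref{it:differentSidesNoCrossing} guarantee; once $u$ is pinned inside the triangle, invoking \ref{it:interiorDifferentSides} in the case where $v$ itself is a vertex of the triple (the subcase its proof handles separately) delivers the contradiction.
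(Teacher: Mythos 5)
Your proof is correct, but its second half takes a genuinely different route from the paper's. Both arguments open identically, with the two applications of \ref{it:differentSidesNoCrossing} (to $\{u,v,x_1,x_2\}$ and $\{u,v,y_1,x_2\}$) showing that $ux_2$ crosses neither $vx_1$ nor $vy_1$. The paper then stays inside the $K_4$ $J_1$ on $\{u,v,x_1,y_1\}$: traversed from $x_2$, the first intersection of $D[x_2y_2]$ with $D[J_1]$ is the crossing with $x_1y_1$, so $x_1y_1$ is incident with the face $F_{J_1}$; since $uv$ is too, $D[J_1]$ must be a crossing $K_4$, and the arc, once across $x_1y_1$, is trapped in the triangular cell at $x_1y_1$ and cannot reach its end $y_2\in\{u,v\}$. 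You instead run a Jordan-parity count on the single triangle $(v,x_1,y_1)$: the arc $D[ux_2]$ meets it in exactly one transversal point, so exactly one end lies interior to $\Delta_{\{v,x_1,y_1\}}$; then \ref{it:oppSideNotInDelta} with $W=\{x_1,y_1\}$, together with hull monotonicity, excludes $x_2$, pinning $u$ inside, and \ref{it:interiorDifferentSides} with $z=v$ (whose proof, as you note, treats that subcase explicitly) yields the contradiction with $x_1,y_1\in\Side{uv}1$. Your route buys cleaner topology --- no case split on whether $J_1$ is planar or crossing, and no face-tracing inside the $K_4$ --- at the cost of invoking the heavier items \ref{it:oppSideNotInDelta} and \ref{it:interiorDifferentSides} (legitimate, since both are established before \ref{it:noSidesCrossings} and neither depends on it) and the monotonicity $\Delta_{W'}\subseteq\Delta_W$ for $W'\subseteq W$, which you assert without proof. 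That fact is true and easy --- the face of $D[W']$ containing $F$ contains $F_{W}$, so the complementary closed discs nest --- and the paper uses it in the same ``by definition'' spirit in the proof of \ref{it:interiorDifferentSides}, but in a written version it deserves the one-line justification.
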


\begin{proof}  Suppose $y_2\in \{u,v\}$.  Apply \ref{it:differentSidesNoCrossing} to each of the $K_4$'s induced by $u,v,x_2,x_1$ and $u,v,x_2,y_1$.  The conclusion is that  $x_2y_2$ does not cross $D[J_1]-x_1y_1$.  Thus, as we follow $D[x_2y_2]$ from $D[x_2]$, its first and only intersection with $D[J_1]$ is with $D[x_1y_1]$, showing $x_1y_1$ is incident with the face $F_{J_1}$.  Since $uv$ is also incident with $F_{J_1}$, we deduce that $D[J_1]$ is a crossing $K_4$.

However, continuing on to the end $y_2\in\{u,v\}$, $D[x_2y_2]$ must cross $C_{J_1}$ without crossing any edge of $J_1$, which is impossible, as required.   \end{proof}

Let  $J_2$ be the $K_4$ induced by $u,v,x_2,y_2$.     Lemma \ref{it:differentSidesNoCrossing} and Claim 1 show that the only possible crossing between $D[J_1]$ and $D[J_2]$ is the crossing of $x_1y_1$ with $x_2y_2$.  However, both $x_2$ and $y_2$ are in $F_{J_1}$, showing that $x_2y_2$ must cross $C_{J_1}$ an even number of times.  As there is at least one crossing and all the crossings are with $x_1y_1$, we violate the requirement that, in a drawing, no two edges cross more than once.

Now for \ref{it:noDiscIntersection}.  From \ref{it:emptySide}, no vertex of one side is inside the convex hull of the other side.  
Going one step further, suppose $x,y\in (W\cup \{u,v\})\cap \Side{uv} 2$ is such that $D[xy]$ has a point that is in $\Delta_{(W\cup \{u,v\})\cap \Side{uv}1}$.  Then $xy$ crosses some edge of $C_{\Side{uv}1}$, contradicting \ref{it:noSidesCrossings}.  

Finally, we show that $\Delta_{\Side{uv}1}\cap \Delta_{\Side{uv}2}=D[\{u,v\}]$.  The cycles $C_{\Side{uv}1}$ and $C_{\Side{uv}2}$ are disjoint except for $uv$.  If there is some  point $a$ of the sphere in $\Delta_{\Side{uv}1}\cap \Delta_{\Side{uv}2}$  that is not in $uv$, then $a$ is in the convex hull of both $C_{\Side{uv}1}$ and $C_{\Side{uv}2}$.  This implies that either $\Delta_{\Side{uv}1}\subseteq \Delta_{\Side{uv}2}$ or $\Delta_{\Side{uv}2}\subseteq \Delta_{\Side{uv}1}$, contradicting  \ref{it:emptySide}.
  \end{cproof}

It follows from the above that, for every edge $uv$, $\Delta_{\Side{uv}1}\cup \Delta_{\Side{uv}2}$ includes all the vertices of $K_n$ and all edges that have both ends in the same one of  $\Side{uv}1$ and $\Side{uv}2$.  We obtain a more refined understanding of the relationship of this subdrawing with the entire drawing in the following.

\begin{lemma}\label{lm:noneOrTwo}
Let $D$ be a face-convex drawing of $K_n$ \change{with outer face $F$} and let $uv$ be any edge of $K_n$.  Let $W$ be any subset of $V(K_n)$.  Then there are not four distinct vertices $x_1,x_2,y_1,y_2$ of $C_W$ appearing in this cyclic order in $C_W$ such that, for $i=1,2$, $x_i,y_i\in \Side{uv}i$.
\end{lemma}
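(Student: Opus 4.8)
The plan is to assume that four such vertices $x_1, x_2, y_1, y_2$ exist, appearing in this cyclic order on $C_W$ with $x_1, y_1 \in \Side{uv}1$ and $x_2, y_2 \in \Side{uv}2$, and to derive a contradiction with the non-crossing conclusion \ref{it:noSidesCrossings}. That item applies directly to our configuration and tells us that $D[x_1 y_1]$ and $D[x_2 y_2]$ do not cross; since the four vertices are distinct, in a good drawing two edges with no common endpoint either cross once or are disjoint, so these two arcs are in fact disjoint. The whole strategy is to contradict this disjointness by a Jordan-curve argument inside the disc $\Delta_W$, exploiting the interleaving of the four endpoints around the boundary cycle $C_W$. (Note that the presence of four distinct vertices on $C_W$ forces $|W|\ge 4$, so $C_W$ and $\Delta_W$ are defined.)

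First I would show that both $D[x_1 y_1]$ and $D[x_2 y_2]$ are \emph{crosscuts} of $\Delta_W$: arcs contained in $\Delta_W$ whose interiors lie in the interior of $\Delta_W$ and which meet $C_W$ in exactly their two (distinct) endpoints. Since $x_1, y_1, x_2, y_2 \in W$, the edges $x_1 y_1$ and $x_2 y_2$ belong to the subdrawing $D[W]$; because $\Delta_W$ is the closure of the complement of the face $F_W$ of $D[W]$ containing $F$, every point of $D[W]$ lies in $\Delta_W$, and in particular $D[x_1 y_1], D[x_2 y_2] \subseteq \Delta_W$. In the cyclic order $x_1, x_2, y_1, y_2$ the vertices $x_1$ and $y_1$ are non-consecutive on $C_W$, so $x_1 y_1$ is not an edge of $C_W$; moreover, an interior point of $D[x_1 y_1]$ lying on $C_W$ would be a transversal crossing of $D[x_1 y_1]$ with an edge of $C_W$, which would force $D[x_1 y_1]$ to pass to the far side of $C_W$ and leave $\Delta_W$, contradicting the containment just established. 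Hence $D[x_1 y_1]$ meets $C_W$ only at $x_1$ and $y_1$, so it is a crosscut, and the same argument applies to $D[x_2 y_2]$.

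Finally I would invoke the standard separation property of crosscuts. The crosscut $D[x_1 y_1]$ splits $\Delta_W$ into two closed regions, and since the endpoints occur in the cyclic order $x_1, x_2, y_1, y_2$, the two boundary arcs of $C_W$ cut off by $x_1$ and $y_1$ contain $x_2$ and $y_2$ respectively, placing $x_2$ and $y_2$ in different regions. Consequently any arc in $\Delta_W$ joining $x_2$ to $y_2$ must meet $D[x_1 y_1]$; in particular $D[x_2 y_2]$ meets $D[x_1 y_1]$, and as their interiors lie in the open disc and they share no endpoint, this forced meeting is a genuine crossing, contradicting \ref{it:noSidesCrossings}. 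I expect the only delicate point to be the crosscut verification of the second paragraph, namely ruling out that either chord runs along or re-enters $C_W$; this is exactly where the transversality of crossings in a good drawing together with the containment $D[x_i y_i]\subseteq \Delta_W$ is used. Once the two edges are known to be genuine crosscuts with interleaving ends, the Jordan-curve conclusion, and hence the contradiction, is immediate.
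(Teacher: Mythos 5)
Your proposal is correct and is essentially the paper's own proof, which argues in one line that both $D[x_1y_1]$ and $D[x_2y_2]$ lie in $\Delta_W$ and, with interleaved endpoints on $C_W$, must cross, contradicting \ref{it:noSidesCrossings}. You have simply made explicit the crosscut/Jordan-curve details that the paper leaves implicit, and your verifications (containment via $D[W]\subseteq\Delta_W$, transversality of crossings in a good drawing) are accurate.
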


\begin{cproof}
If such vertices exist, then the edges $x_1y_1$ and $x_2y_2$ are both in $\Delta_{W}$ and they cross, contradicting Lemma \ref{it:noSidesCrossings}.
\end{cproof}

It follows from Lemma \ref{lm:noneOrTwo} that, for a face-convex drawing of $K_n$ with outer face $F$,  $C_F$ has, for $i=1,2$, a path (possibly with no vertices; this happens only when $uv$ is in $C_F$) contained in $\Side{uv}i\setminus\{u,v\}$.  The ends of these paths are connected in $C_F$ either by an edge or by a path of length 2, the middle vertex being one of $u$ and $v$.

Henceforth, we assume $uv\notin E(C_F)$; that is, we assume both $\Side{uv}1\setminus\{u,v\}$ and $\Side{uv}2\setminus\{u,v\}$ are both non-empty.  In this case,  $C_F\cup C_{\Side{uv}1}\cup C_{\Side{uv}2}$ is a planar embedding of a 2-connected graph.  Three of its faces are $F$, $\Delta_{\Side{uv}1}$, and $\Delta_{\Side{uv}2}$.  { The other faces, if any, are determined by whether or not $u$ or $v$ is in $C_F$. }

Let $A_{uv}$ consist of the ones of $u$ and $v$ not in $C_F$.  For each $a\in A_{uv}$, there is a face $F_{uv}^a$ of $C_F\cup C_{\Side{uv}1}\cup C_{\Side{uv}2}$ incident with $a$ and an edge $f_{uv}^a$ of $C_F$; the edge $f_{uv}^a$ is incident with a vertex in $\Side{uv}1$ and a vertex in $\Side{uv}2$.  Let $Q_{uv}^a$ be the cycle bounding $F_{uv}^a$.  
See Figure \ref{fg:definitionOfA}.

For $j=1,2$, $C_{\Side{uv}j}$ is the union of two internally disjoint paths, namely $C_F\cap C_{\Side{uv}j}$ and the path $P_{uv}^j$ in $C_{\Side{uv}j}$ having its ends in $C_F$ but otherwise disjoint from $C_F$.     If $a$ is in $\{u,v\}\setminus A_{uv}$, then $a$ is in $C_F$ and, therefore, is an end of both $P_{uv}^1$ and $P_{uv}^2$.    If $a\in A_{uv}$, then one end of $f_{uv}^a$ is an end of $P_{uv}^1$ and the other end of $f_{uv}^a$ is an end of $P_{uv}^2$.

\begin{figure}[!ht]
\begin{center}
\scalebox{1.0}{\input{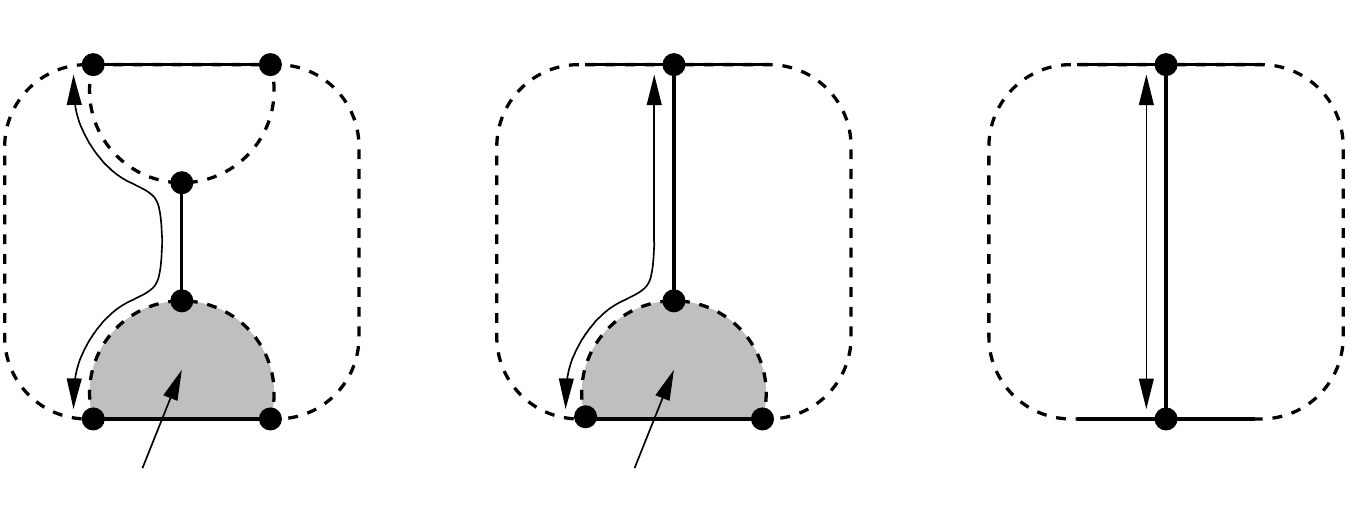_t}}
\caption{In the left-hand figure, $A_e=\{u,v\}$, in the middle $A_e=\{v\}$, and in the right $A_e=\varnothing$.}\label{fg:definitionOfA}
\end{center}
\end{figure}

\ignore{\begin{figure}[!ht]
\begin{center}
\includegraphics[scale=.3]{definitionOfA}
\caption{In the left-hand figure, $A_e=\{u,v\}$, in the middle $A_e=\{v\}$, and in the right $A_e=\varnothing$.}\label{fg:definitionOfA}
\end{center}
\end{figure}
}

We are now prepared to prove our characterization of pseudolinear drawings.  Recall that $\Delta_F$ is the closed disc bounded by $C_F$ that is disjoint from $F$.

\begin{cproofof}{Theorem \ref{th:strongConvex}}
We begin by finding, for each edge $e$ that is not in $C_F$,  an arc $\alpha_{e}$  such that:
\begin{enumerate}[label=(\Roman*)]
\item\label{it:defAlpha} $\alpha_{e}$ consists of three parts, namely $D[e]$, and, for each vertex $u$ incident with $e$, a subarc $\alpha_e^u$, which is either just $u$, if $u\in V(C_F)$, or an arc in $F_e^u$ joining $u$ to a point in $f_e^a$ and otherwise disjoint from $Q_e^u$;
\item\label{it:edgeCrossings} if  $\alpha_{e}$ crosses an edge $e'$ (including possibly $e'\in E(C_F)$), then $e'$ has an incident vertex in each of $\Side{e}1$ and $\Side{e}2$; and 
\item\label{it:twoAlpha} for any other edge $e'$ not in $C_F$, $\alpha_{e}$ and $\alpha_{e'}$ intersect at most once, and if they intersect, the intersection is a crossing point. 
\end{enumerate}

Arbitrarily order the edges of $K_n$ not in $C_F$ as $e_1,\dots,e_r$.  We suppose $i\ge 1$ and we have $\alpha_{e_1},\dots,\alpha_{e_{i-1}}$ satisfying Items \ref{it:defAlpha} -- \ref{it:twoAlpha}.  We show there is an arc $\alpha_{e_i}$ such that $\alpha_{e_1},\dots,\alpha_{e_{i}}$ also satisfy Items \ref{it:defAlpha} -- \ref{it:twoAlpha}.    Let $e_i=uv$.

Since $e_i$ is not in $C_F$, $D[e_i]$ is inside $\Delta_F$.  In $C_F$ there are vertices on each side of $e_i$.     

\medskip\noindent{\bf Useful Fact:}  {\em 
Let $j\in \{1,2,\dots,i-1\}$.  By \ref{it:edgeCrossings} and Lemma \ref{lm:noneOrTwo}, $\alpha_{e_j}$ crosses each of $P_{e_i}^1$ and $P_{e_i}^2$ at most twice.}

\medskip

Since part of the extension of $D[e_i]$ to $\alpha_{e_i}$ is trivial if either $u$ or $v$ is in $C_F$, we will generally proceed below as though neither $u$ nor $v$ is in $C_F$.   When there is a subtlety in the event $u$ or $v$ is in $C_F$, we will specifically mention it. 

We can apply Lemma \ref{lm:dualPaths} in the interior of $F_{e_i}^{u}$ and $F_{e_i}^{v}$ to extend $e_i$ in both directions to points on (actually very near) $f_{e_i}^{u}$ and $f_{e_i}^{v}$ to create a possible $\alpha_{e_i}$.  These are all equivalent up to Reidemeister moves and any one is a potential solution.  We let $\Lambda_i$ denote the set of these dual path solutions.

For $j=1,2,\dots,i-1$, the segment $\alpha_{e_j}$ is {\em unavoidable for $e_i$\/} if $\alpha_{e_j}$ crosses both the paths $P_{e_i}^1$ and $P_{e_i}^2$.  In particular, $\alpha_{e_j}$ is unavoidable if it crosses $e_i$. 

It may be that $e_j$ is incident with one of $u$ and $v$, for example.  As this forces a crossing of $\alpha_{e_j}$ with $\alpha_{e_i}$, we take this as a crossing of both $P_{e_i}^1$ or $P_{e_i}^2$.  On the other hand, if $e_j$ is incident with an end $w$ of $f_{e_i}^u$, then this constitutes a crossing of $\alpha_{e_j}$ with the one of $P_{e_i}^1$ and $P_{e_i}^2$ that contains $w$.

\begin{claim}\label{cl:unavoidable} For $j\in \{1,2,\dots,i-1\}$,  $\alpha_{e_j}$ is unavoidable for $e_i$ if and only if every arc in $\Lambda_i$ crosses $\alpha_{e_j}$.
\end{claim}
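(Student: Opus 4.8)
The plan is to regard both $\alpha_{e_j}$ and any candidate $\alpha_{e_i}\in\Lambda_i$ as crosscuts of the disc $\Delta_F$ (arcs whose interior lies in the open disc and whose two ends lie on $C_F$), and to prove the two implications separately. The organizing observation is that, by \ref{it:noDiscIntersection}, the edge $e_i=uv$ is the unique common part of $\Delta_{\Side{e_i}1}$ and $\Delta_{\Side{e_i}2}$, so $e_i\subseteq P_{e_i}^1\cap P_{e_i}^2$ while $P_{e_i}^1\setminus e_i$ and $P_{e_i}^2\setminus e_i$ are disjoint. Since $\alpha_{e_i}$ is $D[e_i]$ together with subarcs $\alpha_{e_i}^u\subseteq F_{e_i}^u$ and $\alpha_{e_i}^v\subseteq F_{e_i}^v$ that meet their boundary cycles $Q_{e_i}^u,Q_{e_i}^v$ only at the ends on $f_{e_i}^u,f_{e_i}^v$, the crosscut $\alpha_{e_i}$ separates $\Delta_F$ into a side-$1$ region $R_1\supseteq P_{e_i}^1\setminus e_i$ and a side-$2$ region $R_2\supseteq P_{e_i}^2\setminus e_i$. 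This partition is the same for every member of $\Lambda_i$, because the ends always lie on the two fixed crossing edges $f_{e_i}^u,f_{e_i}^v$ of $C_F$.

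For the forward implication I would assume $\alpha_{e_j}$ crosses both $P_{e_i}^1$ and $P_{e_i}^2$. A crossing with $P_{e_i}^1$ either lies on $e_i\subseteq\alpha_{e_i}$, and is then already a crossing with $\alpha_{e_i}$, or lies on $P_{e_i}^1\setminus e_i\subseteq R_1$, giving $\alpha_{e_j}$ a point in the open region $R_1$; symmetrically the crossing with $P_{e_i}^2$ places a point of $\alpha_{e_j}$ on $\alpha_{e_i}$ or in the open region $R_2$. In the nontrivial case $\alpha_{e_j}$ thus has points on both sides of the crosscut $\alpha_{e_i}$, so by connectedness it crosses $\alpha_{e_i}$; as $\alpha_{e_i}\in\Lambda_i$ was arbitrary, every arc in $\Lambda_i$ crosses $\alpha_{e_j}$.

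For the converse I would argue the contrapositive: assuming $\alpha_{e_j}$ crosses at most one of $P_{e_i}^1,P_{e_i}^2$, I construct a single $\alpha_{e_i}\in\Lambda_i$ disjoint from $\alpha_{e_j}$. Say $\alpha_{e_j}$ does not cross $P_{e_i}^2$ (the other case being symmetric). Then $\alpha_{e_j}$ does not cross $e_i\subseteq P_{e_i}^2$, hence is disjoint from the fixed middle part $D[e_i]$, so any crossing with $\alpha_{e_i}$ must occur in $F_{e_i}^u$ or $F_{e_i}^v$. In $F_{e_i}^u$, with boundary $Q_{e_i}^u=f_{e_i}^u\cup P^1_u\cup P^2_u$ (where $P^1_u,P^2_u$ are the subpaths of $P_{e_i}^1,P_{e_i}^2$ joining the ends of $f_{e_i}^u$ to $u$), the hypothesis gives that $\alpha_{e_j}$ crosses neither $P_{e_i}^2$ nor $P^2_u$, so every sub-crosscut of $F_{e_i}^u$ cut out by $\alpha_{e_j}$ has both ends on $f_{e_i}^u\cup P^1_u$ (the Useful Fact, via \ref{it:edgeCrossings} and Lemma \ref{lm:noneOrTwo}, keeps these finite in number). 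None of them can block a route that hugs the uncrossed side $P^2_u$ from the corner $u$ to the portion of $f_{e_i}^u$ adjacent to $P^2_u$, so $u$ and a suitable point of $f_{e_i}^u$ lie in one face of the arrangement inside $F_{e_i}^u$ enlarged by $\alpha_{e_j}$, and Lemma \ref{lm:dualPaths} supplies a dual path realizing $\alpha_{e_i}^u$ disjoint from $\alpha_{e_j}$. The same construction yields $\alpha_{e_i}^v$ in $F_{e_i}^v$; splicing these with $D[e_i]$ produces an arc of $\Lambda_i$ missing $\alpha_{e_j}$, as required by \ref{it:defAlpha}.

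The step I expect to be the main obstacle is this converse, specifically verifying that hugging the uncrossed path $P^2_u$ gives a route in $F_{e_i}^u$ that no sub-crosscut of $\alpha_{e_j}$ can separate from $u$; this is exactly where the one-sidedness hypothesis (no crossing of $P_{e_i}^2$) is genuinely used, and it is what makes Lemma \ref{lm:dualPaths} applicable. The degenerate cases in which $u$ or $v$ lies on $C_F$ require only a remark: the corresponding face and extension are trivial, with $\alpha_{e_i}^u=u$, so such an end contributes no crossings and both the separation argument and the hugging argument go through verbatim with that face omitted.
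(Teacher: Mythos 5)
Your proposal is correct and takes essentially the same approach as the paper: your converse is precisely the paper's construction of a solution in $\Lambda_i$ whose extensions hug the uncrossed one of $P_{e_i}^1$ and $P_{e_i}^2$ just inside $F_{e_i}^u$ and $F_{e_i}^v$, and your forward direction establishes the same separation fact in dual form --- where the paper exhibits a subarc of $\alpha_{e_j}$ joining $P_{e_i}^1$ to $P_{e_i}^2$ inside $Q_{e_i}^u$ or $Q_{e_i}^v$ (or a point of $\alpha_{e_j}$ on $D[e_i]$) that every member of $\Lambda_i$ must cross, you equivalently observe via \ref{it:noDiscIntersection} that each member of $\Lambda_i$ is a crosscut of $\Delta_F$ with the two hulls on opposite sides, so an unavoidable $\alpha_{e_j}$ has points on both sides and must cross it. Your treatment of the degenerate situations ($u$ or $v$ on $C_F$, and the conventional crossings when $e_j$ is incident with $u$, $v$, or an end of $f_{e_i}^u$) is at the same level of informality as the paper's own proof, so no gap beyond the paper's is introduced.
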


\begin{proof}  Suppose first that $\alpha_{e_j}$ is unavoidable for $e_i$.  If $\alpha_{e_j}$ has a point in $D[e_i]$, then evidently $\alpha_{e_j}$ crosses every solution in $\Lambda_i$.

In the case $\alpha_{e_j}$ is disjoint from the closed arc $D[e_i]$, there is some subarc of $\alpha_{e_j}$ with an end in each of $P_{e_i}^1$ and $P_{e_i}^2$, but otherwise disjoint from $P_{e_i}^1\cup P_{e_i}^2$.  This arc must join two points in either $Q_{e_i}^u$ or $Q_{e_i}^v$.   It is clear that every solution in $\Lambda_i$ must cross this arc, as required.

Conversely, if $\alpha_{e_j}$ is not unavoidable, then it does not cross, say, $P_{e_i}^1$.  In this case, there is a solution in $\Lambda_i$ whose extensions of $e_i$ go just inside $F_{e_i}^{u}$ and $F_{e_i}^{v}$, in both cases very close to $P_{e_i}^1$.  This solution does not cross $\alpha_{e_j}$. \end{proof}

Suppose that $\alpha_{e_j}$ is unavoidable for $e_i$ and suppose there is an end $a_j$ of $\alpha_{e_j}$ in $f_{e_i}^{u}$. Following $\alpha_{e_j}$ from $a_j$, we come to a crossing of, say $P_{e_i}^1\cap Q_{e_i}^u$.   The segment of $f_{e_i}^{u}$ from its end $u_{e_i}^1$ in $P_{e_i}^1$ to $a_j$ is {\em restricted for $\alpha_{e_i}$\/}.  We do not want $\alpha_{e_i}$ to cross $\alpha_{e_j}$ on this end segment of $\alpha_{e_j}$, since they must cross elsewhere.  

 It may be that the portion of $\alpha_{e_j}$ from $a_j$ to its first intersection in $P_{e_i}^1\cup P_{e_i}^2$ meets $P_{e_i}^1\cup P_{e_i}^2$ at $u$.  In particular, $u$ is an end of $e_j$.   In this case, it is not immediately clear what the restriction should be.  The other end of $e_j$ is either in $\Side{e_i}1$ or $\Side{e_i}2$, so, correspondingly, $D[e_j]\subseteq \Delta_{\Side{e_i}1}$ or $D[e_j]\subseteq \Delta_{\Side{e_i}2}$.  As $\alpha_{e_i}$ must be made to cross $\alpha_{e_j}$ at their intersection $u$, only in the case $D[e_j]\subseteq \Delta_{\Side{e_i}2}$ do we get a restriction between $u_{e_i}^1$ and $a_j$.    (In the other case, as in the next paragraph, the restriction is between $u_{e_i}^2$ and $a_j$.)

There is a completely analogous restriction from $a_j$ to the other end $u_{e_i}^2$ of $f_{e_i}^{u}$ in $P_{e_i}^2$ if, traversing $\alpha_{e_j}$ from $a_j$, $\alpha_{e_j}$ first meets $P_{e_i}^2$.

Let $R_{u}^1$ be the union of all the $e_j$-restricted portions, $j=1,2,\dots,i-1$, of $f_{e_i}^{u}$ that contain the end $u_{e_i}^1$ of $f_{e_i}^{u}$ and let $R_{u}^2$ be the union of all the restricted portions of $f_{e_i}^{u}$ that contain the other end $u_{e_i}^2$ of $f_{e_i}^{u}$.  

If $u$ is in $C_F$, then the $u$ portion of $\alpha_{e_i}$ is just $u$ and no extension at this end is required.  The restrictions are required in the case $u$ is not in $C_F$, the subject of the next claim.

\begin{claim}  If $u$ is not in $C_F$, then $R_{u}^1\cap R_{u}^2=\varnothing$. \end{claim}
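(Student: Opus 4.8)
The plan is to argue by contradiction: assuming $R_u^1\cap R_u^2\neq\varnothing$, I will produce two of the previously constructed arcs that are forced to cross at least twice, contradicting Item \ref{it:twoAlpha}. First I would unwind the definitions. Every restricted portion of $f_e^u$ is an initial segment measured from one of the ends $u_e^1$ or $u_e^2$, so $R_u^1$ is a single segment reaching from $u_e^1$ and $R_u^2$ a single segment reaching from $u_e^2$; hence $R_u^1\cap R_u^2\neq\varnothing$ means there are an arc $\alpha_{e_j}$ contributing to $R_u^1$ (with end $a_j\in f_e^u$, first meeting $P_e^1\cap Q_e^u$) and an arc $\alpha_{e_k}$ contributing to $R_u^2$ (with end $a_k\in f_e^u$, first meeting $P_e^2\cap Q_e^u$) whose segments overlap; that is, reading $f_e^u$ from $u_e^1$ to $u_e^2$ the points occur in the order $u_e^1, a_k, a_j, u_e^2$. (Should a single arc with both ends on $f_e^u$ produce the overlap, the same analysis applies to its two end-segments.)

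Next I would record one crossing inside $F_e^u$. Let $\beta_j$ be the initial subarc of $\alpha_{e_j}$ from $a_j$ to its first meeting $p_j\in P_e^1\cap Q_e^u$, and $\beta_k$ the initial subarc of $\alpha_{e_k}$ from $a_k$ to $p_k\in P_e^2\cap Q_e^u$; both lie in the closed disc $F_e^u$. Going around the boundary cycle $Q_e^u$, the points occur in the cyclic order $u_e^1, a_k, a_j, u_e^2, p_k, u, p_j$, since $p_k$ lies on the subpath of $P_e^2$ from $u_e^2$ to $u$ and $p_j$ on the subpath of $P_e^1$ from $u$ to $u_e^1$. The endpoints of $\beta_j$ and of $\beta_k$ therefore alternate around $\partial F_e^u$, so the chords $\beta_j$ and $\beta_k$ cross inside $F_e^u$; in particular $\alpha_{e_j}$ and $\alpha_{e_k}$ meet at least once.

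The heart of the argument is then a parity count. Both $\alpha_{e_j}$ and $\alpha_{e_k}$ lie in the closed disc $\Delta_F$ with their four endpoints on $C_F$, so the total number of their crossings is even precisely when the endpoint pairs $\{a_j, b_j\}$ and $\{a_k, b_k\}$ do not link along $C_F$ (here $b_j, b_k$ denote the other ends). I would show these pairs do not link; combined with the crossing found in $F_e^u$ this forces at least two crossings and contradicts Item \ref{it:twoAlpha}. To locate $b_j$: since $\beta_j$ enters $\Delta_{\Side e 1}$ at $p_j$, the end of $e_j$ incident with $a_j$ lies in $\Side e 1$; as $\alpha_{e_j}$ is unavoidable it must also reach $P_e^2$, and an edge with both ends in $\Side e 1$ satisfies $D[e_j]\subseteq\Delta_{\Side e 1}$ and so cannot reach $P_e^2$. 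Thus the other end of $e_j$ lies in $\Side e 2$, and $b_j$ sits on the side-$2$ arc $C_F\cap C_{\Side e 2}$; symmetrically $b_k$ sits on the side-$1$ arc $C_F\cap C_{\Side e 1}$. Reading $C_F$ forward from $a_j$ one then meets $u_e^2$, the side-$2$ arc (containing $b_j$), $f_e^v$, and the side-$1$ arc (containing $b_k$) before returning to $a_k$, so the cyclic order is $a_k, a_j, b_j, b_k$, which is non-linking, as required.

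The step I expect to be the main obstacle is this localization of the far ends: proving that the extension-end $b_j$ at a vertex of $\Side e 2$ really lands on the side-$2$ arc of $C_F$, since that end is built inside a face of the decomposition for $e_j$ rather than for $e_i$. I would control it using the \emph{Useful Fact} that $\alpha_{e_j}$ crosses each of $P_e^1$ and $P_e^2$ at most twice, together with \ref{it:oppSideNotInDelta}, by walking $\alpha_{e_j}$ through the four tiles $\Delta_{\Side e 1}, \Delta_{\Side e 2}, F_e^u, F_e^v$ of $\Delta_F$ and checking that after its forced crossings of $P_e^1$ and $P_e^2$ it can only reach $C_F$ from the side-$2$ tile. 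Finally, the degenerate case in which $\beta_j$ first meets $P_e^1\cup P_e^2$ at the shared vertex $u$ (so $u$ is an end of $e_j$) is handled by the convention already fixed in the text: there a restriction toward $u_e^1$ is recorded exactly when $D[e_j]\subseteq\Delta_{\Side e 2}$, i.e.\ exactly when the other end of $e_j$ lies in $\Side e 2$, so the conclusion $b_j\in C_F\cap C_{\Side e 2}$ again holds and the parity count goes through unchanged.
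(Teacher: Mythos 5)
Your first two steps are sound and match the paper's opening move: overlapping restrictions yield arcs $\alpha_{e_j}$ and $\alpha_{e_k}$ whose ends on $f_{e_i}^{u}$ occur in the order $u_{e_i}^1,a_k,a_j,u_{e_i}^2$, and the alternation of $a_j,p_j,a_k,p_k$ around $Q_{e_i}^u$ forces their initial segments to cross once inside $F_{e_i}^u$, so by \ref{it:twoAlpha} they never cross again. The gap is in the step you yourself flag as the main obstacle: the localization of the far endpoints, $b_j\in C_F\cap C_{\Side{e_i}2}$ and $b_k\in C_F\cap C_{\Side{e_i}1}$. This claim is not just unproven but false in general. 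First, your inference conflates $D[e_j]$ with $\alpha_{e_j}$: even if both ends of $e_j$ lie in $\Side{e_i}1$, so that $D[e_j]\subseteq\Delta_{\Side{e_i}1}$, the extension arcs of $\alpha_{e_j}$ are not confined to $\Delta_{\Side{e_i}1}$; condition \ref{it:edgeCrossings} permits them to cross an edge $e'$ of $P_{e_i}^2$ whenever $e'$ has ends on both sides of $e_j$ --- a condition about $e_j$'s sides, not $e_i$'s. Second, even when one end $w_2$ of $e_j$ does lie in $\Side{e_i}2$, the endpoint $b_j$ lies on the edge $f_{e_j}^{w_2}$ of $C_F$ determined by $e_j$'s own decomposition, and nothing prevents $f_{e_j}^{w_2}=f_{e_i}^{v}$, which lies in neither $C_F\cap C_{\Side{e_i}1}$ nor $C_F\cap C_{\Side{e_i}2}$. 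Indeed the routing $a_j\to(P_{e_i}^1\cap Q_{e_i}^u)\to\Delta_{\Side{e_i}1}\to(P_{e_i}^1\cap Q_{e_i}^v)\to F_{e_i}^v\to(P_{e_i}^2\cap Q_{e_i}^v)\to\Delta_{\Side{e_i}2}\to b_j\in f_{e_i}^v$ uses exactly two crossings of each of $P_{e_i}^1$ and $P_{e_i}^2$, so it is consistent with the Useful Fact and with \ref{it:defAlpha}--\ref{it:twoAlpha}; an edge with one end on each side of $e_i$ naturally produces such an arc.

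This failure is fatal to the parity count, not merely to its proof: if both $b_j$ and $b_k$ land on $f_{e_i}^v$, then their relative order there decides the linking of $\{a_j,b_j\}$ and $\{a_k,b_k\}$ along $C_F$, and in the linked order the parity argument yields an odd number of crossings --- perfectly consistent with the single crossing in $F_{e_i}^u$ that \ref{it:twoAlpha} allows, so no contradiction results. Pinning down the order of $b_j,b_k$ on $f_{e_i}^v$ is essentially the same restriction problem at the $v$-end (note the construction deliberately uses side-1 restrictions at the $u$-end and side-2 restrictions at the $v$-end, an asymmetry your symmetric localization does not engage). The paper's proof avoids endpoint localization entirely: it orders the crossing points along $P_{e_i}^1$ and $P_{e_i}^2$ (showing the second crossing $\times_{j,2}^1$ lies between $\times_{j,1}^1$ and $v_{e_i}^1$, via the trapped-tail argument ending in $f_{e_i}^u$), then builds a closed curve $\gamma$ from an initial piece of $\alpha_{e_j}$, part of $C_{\Side{e_i}2}$, and part of $f_{e_i}^u$, and shows $\alpha_{e_{j'}}$ would have to cross $\gamma$ twice. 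That finer information about where the arcs cross $P_{e_i}^1$ and $P_{e_i}^2$, rather than where they end on $C_F$, is exactly what your outline is missing.
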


\begin{proof} If the intersection is not empty, then there exist $j,j'\in \{1,2,\dots,i-1\}$ such that:
\begin{enumerate}\item $\alpha_{e_j}$ proceeds from $a_j$ in $f_{e_i}^u$ to $P_{e_i}^1$; \item $\alpha_{e_{j'}}$ proceeds from $a_{j'}$ in $F_{e_i}^u$ to $P_{e_i}^2$; and, \item in $f_{e_i}^u$, $a_{j'}$ is not further from $u_{e_i}^1$ than $a_j$ is.
\end{enumerate}

In particular, $\sigma_{u,j}^1$ and $\sigma_{u,j'}^2$ must cross in $F_{e_i}^{u}$, so they never cross again.  (This is true even if the crossing is $a_j=a_{j'}$.  It turns out that $a_j=a_{j'}$ does not occur in our construction, but we do not need this fact, so we do not use it.)

As we traverse $\alpha_{e_j}$ beginning at $a_j$, we first cross $P_{e_i}^1$ at $\times_{j,1}^1$ in $P_{e_i}^1\cap Q_{e_i}^u$.   
Since $\alpha_j$ is unavoidable, it must cross $P_{e_i}^2$ for the first time at the point $\times_{j,1}^2$.   Between $\times_{j,1}^1$ and $\times_{j,1}^2$, there is a second crossing $\times_{j,2}^1$ with $P_{e_i}^1$; possibly $\times_{j,2}^1=\times_{j,1}^2$.  The Useful Fact implies these are no other crossings of $\alpha_{e_j}$ with $P_{e_i}^1$.  (In $\times^k_{(r,s)}$, the exponent $k$ refers to which $P_{e_i}^k$ is being crossed; the subscripts $(r,s)$ are indicating which arc $\alpha_{e_r}$  is under consideration and, for $s\in\{1,2\}$, it is the $s^{\textrm{th}}$ crossing of $\alpha_{e_r}$  with $P_{e_i}^k$ as we traverse $\alpha_{e_r}$  from $a_r$.)

We claim that the second crossing $\times_{j,2}^1$ of $\alpha_j$ with $P_{e_i}^1$ cannot be in the segment of $P_{e_i}^1$ between $u_{e_i}^1$ and $\times_{j,1}^1$.   To see this,  suppose $\times_{j,2}^1$ is in this segment; let $\sigma_j$ be the segment of $\alpha_{e_j}$ from $\times_{j,2}^1$ to the other end $a'_j$.  The Useful Fact and the non-self-crossing of $\alpha_{e_j}$ imply that $\sigma_j$ is trapped inside the subregion of $F_{e_i}^u$ incident with $u_{e_i}^1$ and the segment of $\alpha_{e_j}$ from $a_j$ to $\times_{j,1}^1$.   The only place $a'_j$ can be is in $f_{e_i}^u$, contradicting \ref{it:defAlpha}.

A very similar argument shows that $\alpha_{e_{j'}}$ cannot cross that same segment of $P_{e_i}^2$. (Such a crossing would be the second of $\alpha_{e_{j'}}$ with $P_{e_i}^2$.  Thus, the other end $a'_{j'}$ of $\alpha_{e_{j'}}$ would also be in $f_{e_i}^u$.)

We conclude that $\times_{j,2}^1$ is in $P_{e_i}^1$ between $\times_{j,1}^1$ and the other end $v_{e_i}^1$ of $P_{e_i}^1$.  

Since $\alpha_{e_{j'}}$ is unavoidable, as we traverse it from $a_{j'}$ in $f_{e_i}^u$, there is a first crossing $\times_{j',1}^1$ of $\alpha_{e_{j'}}$ with $P_{e_i}^1$.  Between $a_{j'}$ and $\times_{j',1}^1$, there are the two crossings $\times_{j',1}^2$ and $\times_{j',2}^2$ of $\alpha_{e_{j'}}$ with $P_{e_i}^2$; possibly $\times_{j',2}^2=\times_{j',1}^1$.
Note that the Useful Fact implies $\alpha_{e_{j'}}$ is disjoint from $C_F\cap C_{\Side{e_i}2}$.

Let $\gamma$ be the simple closed curve consisting of the portion of  $\alpha_{e_j}$ from $a_j$ to $\times_{j,1}^2$, and then the portion of $C_{\Side{uv}2}$ from $\times_{j,1}^2$ to $v_{e_i}^2$ and along $C_F\cap C_{\Side{uv}2}$ to $u_{e_i}^2$, and then the portion of $f_{e_i}^u$ from $u_{e_i}^2$ back to $a_j$.  

From $\times_{j',1}^2$ to the other end $a_{j'}$, $\alpha_{e_{j'}}$ must cross $\gamma$.  The only segment it can cross is the portion of $P_{e_i}^2$ between $\times_{j,1}^2$ and $v_{e_i}^2$.  This implies that 
$\times_{j',2}^2$ is between $\times_{j,1}^2$ and $v_{e_i}^2$ in $P_{e_i}^2$.

Reversing the roles of $j$ and $j'$ and of sides 1 and 2, we conclude that the preceding argument shows that $\times_{j,2}^1$ is between $\times_{j',1}^1$ and $v_{e_i}^1$ in $P_{e_i}^1$.

The simple closed curve $\gamma$ above is crossed by $\alpha_{e_{j'}}$ at the point $\times_{j',2}^2$ in the segment of $P_{e_i}^2$ between $\times_{j,1}^2$ and $v_{e_i}^2$. See Figure \ref{fg:overlapRestrictions}. On the other hand, $\times_{j',1}^1$ is on the segment of $P_{e_i}^1$ between the two points $\times_{j,1}^1$ and $\times_{j,2}^1$ and so is on the other side of $\gamma$.  This shows that $\alpha_{e_{j'}}$ must cross $\gamma$ again and this is impossible.  \end{proof}

\begin{figure}[!ht]
\begin{center}
\scalebox{1.6}{\input{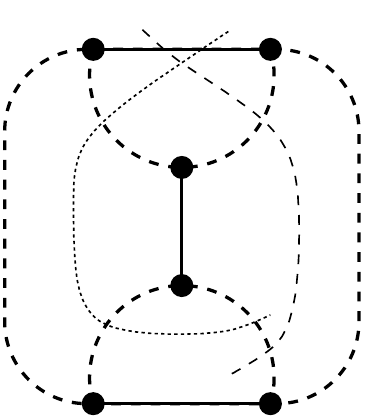_t}}
\caption{One instance of overlapping restrictions.  There is no way for $\alpha_{e_{j'}}$ to get to $\times^1_{j',1}$.}\label{fg:overlapRestrictions}
\end{center}
\end{figure}

\ignore{\begin{figure}[!ht]
\begin{center}
\includegraphics[scale=.5]{OverlapRestrictions}
\caption{One instance of overlapping restrictions.  There is no way for $\alpha_{e_{j'}}$ to get to $\times^1_{j',1}$.}\label{fg:overlapRestrictions}
\end{center}
\end{figure}
}

If $f_{e_i}^{u}$ does not exist, then set $\eta_{u}=u$.  Otherwise, let $\rho_{u}$ be either $u_{e_i}^1$ or the point of $R_{u}^1$ furthest from $u_{e_i}^1$.  Then $\eta_{u}$ is any point between $\rho_{u}$ and the next point between $\rho_{u}$ and $u_{e_i}^2$ that is an end of some $\alpha_{j}$, for $j\in \{1,2,\dots,i-1\}$.  Likewise,  $\eta_{v}$ is any point of $f_{e_i}^v$ between the last point $\rho_v$ of $R_v^2$ and the next point between $\rho_u$ and $v_{e_1}^1$ that is an end of some $\alpha_j$, for $j\in \{1,2,\dots,i-1\}$.  (Notice that we use the $P_{e_i}^1$-side restrictions at the ``$u$-end" and the $P_{e_i}^2$-side restrictions at the $v$-end.    We could have equally well used the $P_{e_i}^2$-side restrictions at the $u$-end and the $P_{e_i}^1$-restrictions at the $v$-end.)

We now apply Lemma \ref{lm:dualPaths} to the region $F_{e_i}^{u}$ (if it exists) using $u$ and $\eta_{u}$ as ends to be connected.  We do the same thing in $F_{e_i}^{v}$ joining $v$ and $\eta_{v}$.  These arcs together with $D[e_i]$ give us $\alpha_{e_i}$, as described in \ref{it:defAlpha}.  (See Figure \ref{fg:alphaEi}.)

\begin{figure}[!ht]
\begin{center}
\scalebox{1.6}{\input{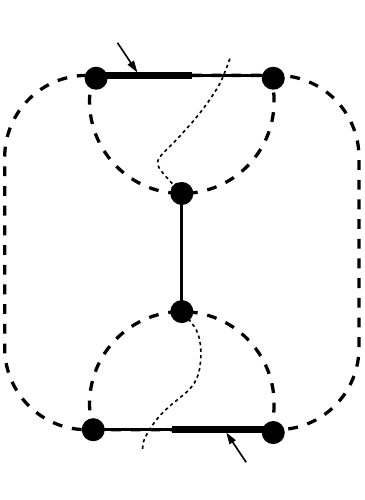_t}}
\caption{The arc $\alpha_{e_i}$.}\label{fg:alphaEi}
\end{center}
\end{figure}

\ignore{
\begin{figure}[!ht]
\begin{center}
\includegraphics[scale=.5]{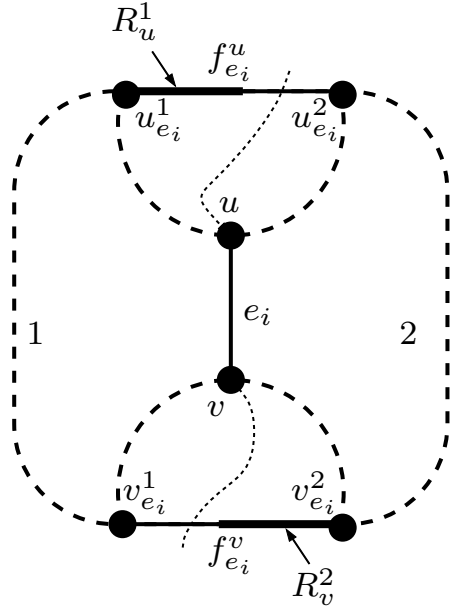}
\caption{The arc $\alpha_{e_i}$.}\label{fg:alphaEi}
\end{center}
\end{figure}
}

The construction of $\alpha_{e_i}$ makes it clear that $\alpha_{e_i}$ meets each of $\Delta_{\Side{e_i}1}$ and $\Delta_{\Side{e_i}2}$ in $e_i$.  Therefore, $\alpha_{e_i}$ satisfies \ref{it:edgeCrossings}.

\begin{claim}\label{cl:unavoidableOnce}  For any $j\in \{1,2,\dots,i-1\}$ for which $\alpha_{e_j}$ is unavoidable,  $\alpha_{e_i}$ crosses $\alpha_{e_j}$ exactly once. 
\end{claim}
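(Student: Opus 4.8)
The plan is to pair the lower bound that is already in hand with a parity-and-bigon analysis localized to the two extension faces. For the lower bound, observe that the arc $\alpha_{e_i}$ produced by the construction is one of the dual-path solutions in $\Lambda_i$ (it extends $D[e_i]$ through $F_{e_i}^{u}$ and $F_{e_i}^{v}$ to the points $\eta_u\in f_{e_i}^{u}$ and $\eta_v\in f_{e_i}^{v}$), so Claim \ref{cl:unavoidable} immediately gives that $\alpha_{e_i}$ crosses the unavoidable arc $\alpha_{e_j}$ at least once. The entire content of the statement is therefore the upper bound: $\alpha_{e_i}$ and $\alpha_{e_j}$ cross \emph{at most} once.

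First I would localize the possible crossings. By \ref{it:defAlpha}, $\alpha_{e_i}=\alpha_{e_i}^{u}\cup D[e_i]\cup\alpha_{e_i}^{v}$ with $\alpha_{e_i}^{u}\subseteq \overline{F_{e_i}^{u}}$ and $\alpha_{e_i}^{v}\subseteq \overline{F_{e_i}^{v}}$, so every crossing with $\alpha_{e_j}$ occurs on $D[e_i]$, inside $F_{e_i}^{u}$, or inside $F_{e_i}^{v}$. On the edge $D[e_i]$ there is at most one crossing: the edge parts satisfy that $D[e_j]$ meets $D[e_i]$ at most once since $D$ is a good drawing, while the extension parts of $\alpha_{e_j}$ lie in the faces of $C_F\cup C_{\Side{e_j}1}\cup C_{\Side{e_j}2}$ incident with $C_F$ and hence meet the interior edge $D[e_i]$ only as permitted by \ref{it:edgeCrossings}. (When $u$ or $v$ lies in $C_F$ the corresponding extension is trivial and that face drops out.) It then remains to bound the crossings inside the two extension faces.

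Next I would run a Jordan-curve bookkeeping inside each extension face, where the Useful Fact and the restriction construction do the work. View $\alpha_{e_i}^{u}$ as a chord of the disc $F_{e_i}^{u}$ joining the corner $u$ to the point $\eta_u\in f_{e_i}^{u}$; this chord splits the boundary cycle $Q_{e_i}^{u}$ into the arc carrying the $P_{e_i}^1$-side together with the segment of $f_{e_i}^{u}$ from $u_{e_i}^1$ to $\eta_u$, and the complementary arc carrying the $P_{e_i}^2$-side. A maximal subarc of $\alpha_{e_j}$ lying in $F_{e_i}^{u}$ crosses $\alpha_{e_i}^{u}$ precisely when its two ends on $Q_{e_i}^{u}$ fall on opposite sides of $\{u,\eta_u\}$. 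The Useful Fact caps the number of such subarcs, since $\alpha_{e_j}$ meets each of $P_{e_i}^1$ and $P_{e_i}^2$ at most twice; and the choice of $\eta_u$ beyond the restricted set $R_u^1$ — valid because of the preceding claim that $R_u^1\cap R_u^2=\varnothing$ — is exactly what forces any subarc entering $F_{e_i}^{u}$ from an end $a_j$ on $f_{e_i}^{u}$ to exit on the same side of the chord, contributing no crossing. The symmetric placement of $\eta_v$ beyond $R_v^2$ handles $F_{e_i}^{v}$.

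Finally, I would eliminate any residual double crossing by a bigon argument. If $\alpha_{e_i}$ and $\alpha_{e_j}$ crossed twice, the subarcs joining two consecutive crossing points would bound a disc; tracing $\alpha_{e_j}$ around its boundary and invoking the at-most-two bound of the Useful Fact would trap an end of $\alpha_{e_j}$ inside $F_{e_i}^{u}$ or $F_{e_i}^{v}$, forcing that end to lie on $f_{e_i}^{u}$ and contradicting \ref{it:defAlpha} — precisely the kind of ``the only place the end can be is in $f_{e_i}^{u}$'' contradiction used in proving $R_u^1\cap R_u^2=\varnothing$. I expect this last step, the careful matching of the Useful Fact's crossing bound against the restriction-driven placement of $\eta_u$ and $\eta_v$, to be the main obstacle; the localization in the second paragraph and the parity statement are comparatively routine.
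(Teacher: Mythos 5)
Your first two steps are sound and essentially match the paper's opening moves: the constructed $\alpha_{e_i}$ is one of the dual-path solutions in $\Lambda_i$, so Claim \ref{cl:unavoidable} supplies the lower bound, and your restriction-based same-side analysis in $F_{e_i}^{u}$ and $F_{e_i}^{v}$ is the paper's first observation that, because $\eta_u$ lies beyond $R_u^1$ and $\eta_v$ beyond $R_v^2$, the arc $\alpha_{e_i}$ cannot cross the initial portion of $\alpha_{e_j}$ from an end in $f_{e_i}^{u}$ or $f_{e_i}^{v}$ to its first meeting with $P_{e_i}^1\cup P_{e_i}^2$ (your ``precisely when the ends are on opposite sides of the chord'' also tacitly needs the at-most-once property of the dual path from Lemma \ref{lm:dualPaths}, which you should invoke). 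After that step, every remaining crossing of $\alpha_{e_i}$ by $\alpha_{e_j}$ corresponds to a segment of $\alpha_{e_j}$ with one end in $P_{e_i}^1$, one end in $P_{e_i}^2$, and otherwise disjoint from both --- a crossing on $D[e_i]$ itself being the degenerate case of a single point. This, incidentally, makes your separate sub-claim that $D[e_i]$ carries at most one crossing both unnecessary and, as justified, inadequate: \ref{it:edgeCrossings} constrains \emph{which} edges $\alpha_{e_j}$ may cross, not how many times, so the bound on $D[e_i]$ does not follow from it; in the paper it is subsumed by the traversal-segment count.

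The genuine gap is your last step, which is the heart of the claim: ruling out two disjoint traversal segments $\tau_1,\tau_2$. Your proposed contradiction --- that the bigon traps an end of $\alpha_{e_j}$ inside $F_{e_i}^{u}$ or $F_{e_i}^{v}$, forcing it onto $f_{e_i}^{u}$ and contradicting \ref{it:defAlpha} --- fails on both counts. First, an end of $\alpha_{e_j}$ in $f_{e_i}^{u}$ is perfectly consistent with \ref{it:defAlpha}; unavoidable arcs with an end $a_j\in f_{e_i}^{u}$ are exactly the ones generating the restrictions, so their existence cannot be the contradiction. Second, in the two-traversal configuration nothing is trapped in an extension face: by the Useful Fact, $\alpha_{e_j}$ crosses $P_{e_i}^1$ at one end of $\tau_1$ and $P_{e_i}^2$ at the other, runs through $\Delta_{\Side{e_i}2}$, re-crosses $P_{e_i}^2$ to begin $\tau_2$, crosses $P_{e_i}^1$ a second and final time, and then the entire remainder of $\alpha_{e_j}$ lies inside $\Delta_{\Side{e_i}1}$, so its terminus sits on $C_{\Side{e_i}1}$ (on the portion $C_F\cap C_{\Side{e_i}1}$) --- again fully consistent with \ref{it:defAlpha}. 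The paper's contradiction, which your sketch never reaches, is a counting one: that terminus is a \emph{third} intersection of $\alpha_{e_j}$ with $C_{\Side{e_i}1}$, alongside $\tau_1\cap P_{e_i}^1$ and $\tau_2\cap P_{e_i}^1$, whereas Lemma \ref{lm:noneOrTwo} together with \ref{it:edgeCrossings} caps such intersections at two. Replacing your bigon paragraph with this trajectory-and-count argument closes the gap; the rest of your write-up survives.
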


\begin{proof}  Because of the restrictions, $\alpha_{e_i}$ does not cross the portions (if either or both of these exist) of $\alpha_{e_j}$ from $f_{e_i}^u$ to its first intersection with $P_{e_i}^1\cup P_{e_i}^2$ and the analogous segment from $f_{e_i}^v$.  It is enough to show that $\alpha_{e_j}$ does not have two  completely disjoint segments that have one end in $P_{e_i}^1$ and one end in $P_{e_i}^2$, but otherwise disjoint from $P_{e_i}^1\cup P_{e_i}^2$; this includes the possibility of one segment consisting of just one point in $e_i$.

Suppose $\tau_1$ and $\tau_2$ are two such segments of $\alpha_j$.  Since each involves a crossing of each of $P_{e_i}^1$ and $P_{e_i}^2$, the Useful Fact implies that these are the only crossings of $\alpha_j$ with $P_{e_i}^1\cup P_{e_i}^2$.   We may assume that, in traversing $\alpha_j$ from one end to the other, we first traverse $\tau_1$ from its end in $P_{e_i}^1$ to its end $\times_{2,1}$ in $P_{e_i}^2$.  As we continue along $\alpha_j$ from $\times_{2,1}$, we are inside $\Delta_{\Side{e_i}2}$ until we meet the second crossing $\times_{2,2}$ of $\alpha_j$ and $P_{e_i}^2$.  

The earlier ``Useful Fact" asserts that $\times_{2,1}$ and $\times_{2,2}$ are the only crossings of $\alpha_j$ with $P_{e_i}^2$.  It follows that $\times_{2,2}$ is an end of $\tau_2$ and, continuing along $\alpha_{e_j}$ from $\times_{2,2}$ we are traversing $\tau_2$ up to its other end, which is in $P_{e_i}^1$.

In summary, $\alpha_{e_j}$ crosses $P_{e_i}^1$ at one end of $\tau_1$, crosses $P_{e_i}^2$ at the other end of $\tau_1$, then goes through $\Delta_{\Side{e_i}2}$ until it crosses $P_{e_i}^2$ a second (and final) time, beginning its traversal of $\tau_2$ up to the second (and final) crossing of $P_{e_i}^1$.  The rest of $\alpha_{e_j}$ is inside $\Delta_{\Side{e_i}1}$ and so its terminus must be in $C_{\Side{e_i}1}$.

However, Lemma \ref{lm:noneOrTwo} and \ref{it:edgeCrossings} imply $\alpha_j$ crosses $C_{\Side{e_i}1}$ only twice, and we have three crossings:  $\tau_1\cap P_{e_i}^1$, $\tau_2\cap P_{e_i}^1$, and the terminus of $\alpha_{e_j}$, a contradiction.
\end{proof}

The verification that $\alpha_{e_1},\dots,\alpha_{e_i}$ satisfy Conditions \ref{it:defAlpha} -- \ref{it:twoAlpha} is completed by showing that $\alpha_{e_i}$ does not cross any avoidable $\alpha_{e_j}$ more than once.  By way of contradiction, we assume that $\alpha_{e_i}$ crosses the avoidable $\alpha_{e_j}$ more than once.    Since $\alpha_{e_j}$ is avoidable, either it does not cross $P_{e_i}^1$ or it does not cross $P_{e_i}^2$; for the sake of definiteness, we assume the latter.  In particular, $\alpha_{e_j}$ does not cross $e_i$ and, therefore, must cross each of the subarcs $\alpha_{e_i}^{u}$ and $\alpha_{e_i}^{v}$ (see Figure \ref{fg:2crossingExtension}).

\begin{figure}[!ht]
\begin{center}
\includegraphics[scale=.3]{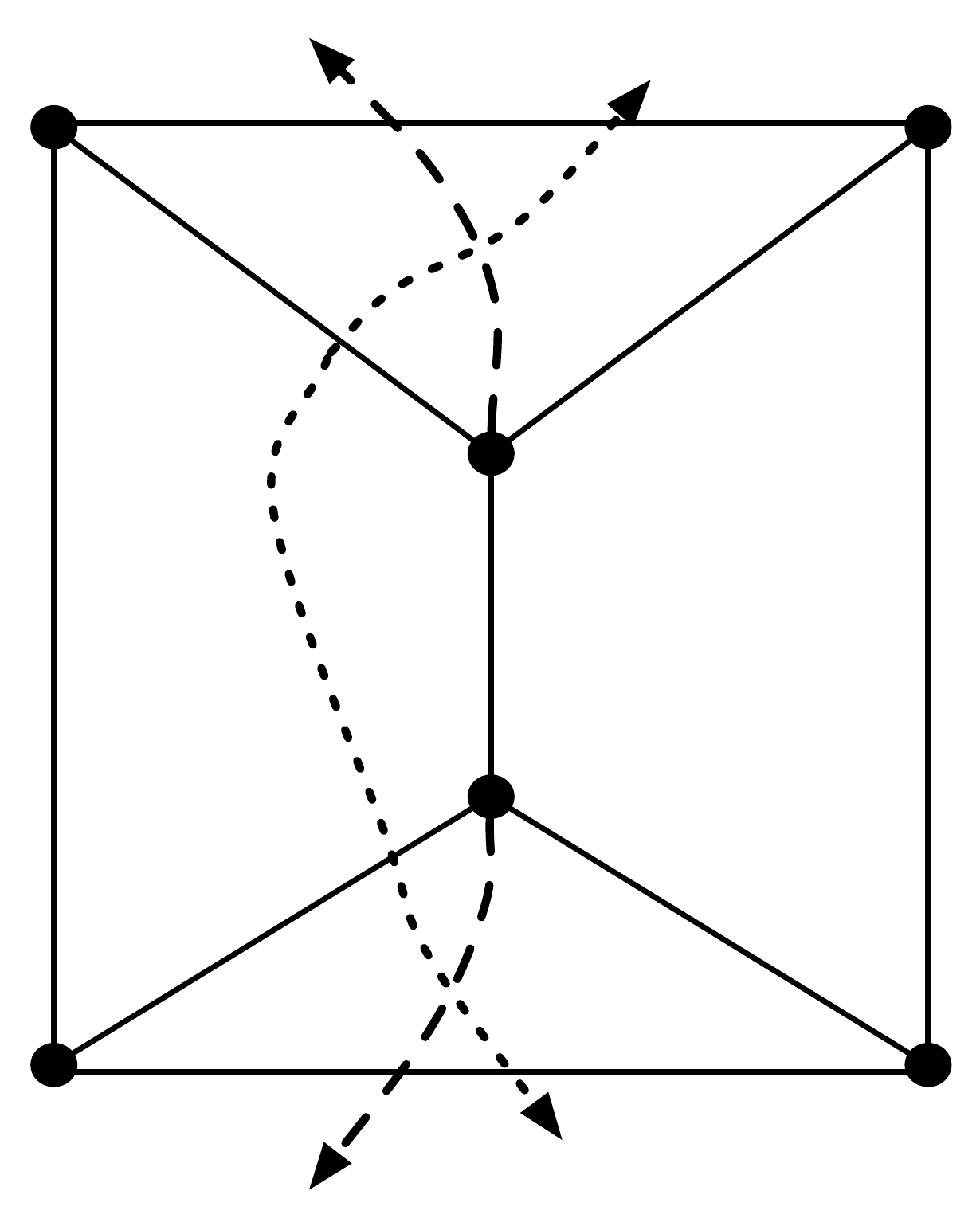}
\caption{One instance of $\alpha_{e_i}$ crossing an avoidable $\alpha_{e_j}$ twice.}\label{fg:2crossingExtension}
\end{center}
\end{figure}

By the choice of $\eta_{v}$, there is an unavoidable $\alpha_{e_{j'}}$ that crosses $f_{e_i}^{v}$ between the intersections of $\alpha_{e_j}$ and $\alpha_{e_i}$ on $f_{e_i}^{v}$.  Moreover, from its intersection with $f_{e_i}^{v}$, $\alpha_{e_{j'}}$ crosses $P_{e_i}^2\cap Q_{e_i}^{v}$ and, in going to that crossing, it must also cross the segment of $\alpha_{e_j}$ inside $Q_{e_i}^{v}$.  Thus, \ref{it:twoAlpha} implies that $\alpha_{e_{j'}}$ and $\alpha_{e_j}$ cannot cross again.  

As we follow $\alpha_{e_{j'}}$ from its end in $f_{e_i}^v$, we come first to the crossing with $\alpha_{e_j}$, and then to a crossing with $P_{e_i}^2$.  Continuing from this point, we cross $P_{e_i}^2$ again at $\times_{2,2}$ followed by the first crossing $\times_{1,1}$ with $P_{e_i}^1$.  Some point $\times_{e_i}$ of $\alpha_{e_{j'}}$ in the closed subarc between $\times_{2,2}$ and $\times_{1,1}$ is in $\alpha_{e_i}$.  Claim \ref{cl:unavoidableOnce} asserts that $\times_{e_i}$ is the unique crossing of $\alpha_{e_{j'}}$ with $\alpha_{e_i}$.

The point $\times_{e_i}$ must lie on the segment of $\alpha_{e_i}$ between the two crossings of $\alpha_{e_i}$ with $\alpha_{e_j}$, as otherwise $\alpha_{e_{j'}}$ must cross $\alpha_{e_j}$ a second time.  It follows that, as we continue a short distance along $\alpha_{e_{j'}}$ beyond $\times_{e_i}$, there is a point of $\alpha_{e_{j'}}$ that is inside the simple closed curve bounded by the segments of each of $\alpha_{e_i}$ and $\alpha_{e_j}$ between their two intersection points.  But $\alpha_{e_{j'}}$ must get to $C_F$ from here without crossing $\alpha_{e_i}\cup \alpha_{e_j}$, which is impossible, showing that $\alpha_{e_1},\dots,\alpha_{e_i}$ satisfy all of \ref{it:defAlpha}--\ref{it:twoAlpha}.

\ignore{*****

It follows that, except for the portion of $\alpha_{e_{j'}}$ from its end in $f_{e_i}^{v}$ to the crossing with $\alpha_{e_j}$, $\alpha_{e_{j'}}$ is contained in the simple closed curve containing $\alpha_{e_j}$ and the portion of $C_F$ joining those ends and containing $C_F\cap C_{\Side{e_i}2}$.

Because $\alpha_{e_{j'}}$ is unavoidable for $e_i$, it must cross $P_{e_i}^1$.  Traversing $\alpha_{e_{j'}}$ from its end in $f_{e_i}^{v}$, this must be after the crossing with $P_{e_i}^2\cap Q_{e_i}^{v}$.   Since $\alpha_{e_i}$ separates the interior of $\Delta_{\Side{e_i}2}$ from $\Delta_{\Side{e_i}1}$, this implies a crossing of $\alpha_{e_{j'}}$ with $\alpha_{e_i}$ no later than this first crossing of $P_{e_i}^1$.  The preceding paragraph shows that $\alpha_{e_{j'}}$ does not have its other end in $C_F\cap C_{\Side{uv}1}$ and, therefore, $\alpha_{e_{j'}}$ must cross $P_{e_1}^1$ a second time.

The fact that $\alpha_{e_{j'}}$ cannot cross $\alpha_{e_j}$ again implies that $\alpha_{e_{j'}}$ terminates in the portion of $C_F$ between the two ends of $\alpha_{e_j}$ that contains $C_F\cap C_{\Side{uv}2}$.   Again, $\alpha_{e_i}$ separates the second crossing of $\alpha_{e_{j'}}$ with $P_{e_i}^1$ from this portion of $C_F$, showing that $\alpha_{e_{j'}}$ crosses $\alpha_{e_i}$ a second time, contradicting Claim \ref{cl:unavoidableOnce}.
}

What remains is to deal with the portions of the pseudolines that are in $F$.    We begin by letting $\gamma$ be a circle so that $D[K_n]$ is contained in the interior of $\gamma$.  We label $C_F$ as $(v_0,f_1,v_1,\dots,f_k,v_0)$.  Our first step is to extend one at a time each $D[f_i]$ to an arc $\beta_{f_i}$ in $F\cup D[f_i]$ that, except for its endpoints, is contained in the open, bounded side of $\gamma$ joining antipodal points $a_i$ and $b_i$ on $\gamma$.  Pick arbitrarily two antipodal points $a_1$ and $b_1$  on $\gamma$ and extend $D[f_1]$ in $F$ to an arc $\beta_{f_1}$ joining $a_1$ and $b_1$.

Suppose we have $\beta_{f_1},\dots,\beta_{f_{i-1}}$.  The arc $\beta_{f_i}$ will have to cross $\beta_{f_{i-1}}$ at $v_{i-1}$.  (If $i=k$, then $\beta_{f_i}$ will also have to cross $\beta_{f_1}$ at $v_0$.)  Extend $D[f_i]$ slightly so that it actually crosses $\beta_{f_{i-1}}$ at $v_{i-1}$ (and $\beta_{f_1}$ at $v_0$ when $i=k$).  If $i<k$, then pick arbitrarily antipodal points  $a_{i}$ and $b_i$ on $\gamma$ distinct from $a_1,\dots,a_{i-1},b_1,\dots,b_{i-1}$ and join the endpoints of $f_i$ to these points, making sure to cross $\beta_{f_{i-1}}$ at $v_{i-1}$. 

If $i=k$, then $\beta_{f_1}$ and $\beta_{f_{k-1}}$ both constrain $\beta_{f_k}$.  In this case, $f_k$ is in precisely one of the four regions inside $\gamma$ created by $\beta_{f_1}$ and $\beta_{f_{k-1}}$.  The arc in $\gamma$ contained in the boundary of this region and its  antipodal mate are to be avoided.  The endpoints $a_k$ and $b_k$ of $\beta_{f_k}$ are in the other antipodal pair of arcs in $\gamma$.  We extend $D[f_k]$ slightly into each of these two regions.

In every case, we apply Lemma \ref{lm:dualPaths} to the two regions of $F\setminus \beta_{f_{i-1}}$.  In particular, both slight extensions of $D[f_i]$ are constrained not to cross $\beta_{f_{i-1}}$ except at $v_{i-1}$.  (For $f_k$, we restrict to the two of the four regions of $F\setminus (\beta_{f_{k-1}}\cup\beta_{f_1})$ that contain the slight extensions of $D[f_k]$.)

These restrictions guarantee that $\beta_{f_{i}}$ does not cross $\beta_{f_{i-1}}$ more than once.  Furthermore, $\beta_{f_i}$ does not cross any of $\beta_{f_1},\dots,\beta_{f_{i-2}}$ more than once in each of the subregions of $F$.  For $j=1,2,\dots,i-2$, $\beta_{f_i}$ and $\beta_{f_j}$ have interlaced ends in $\gamma$ and, therefore, they cross an odd number of times.  It follows that they cross at most once.

We use a similar process to extend the arcs $\alpha_{e_i}$ that join points in $C_F$.  Again, extend each one slightly into $F$ in such a way that, every time two $\alpha_{e_i}$'s meet at a common vertex in $C_F$, they cross at that vertex.   The slightly extended $\alpha_{e_i}$ crosses some of the $\beta_{f_j}$'s:  at least 2 (with equality if both endpoints of $\alpha_{e_i}$ are in the interiors of $f_j$'s) and at most 4 (with equality if both endpoints of $e_i$ are in $C_F$).  

We will be proceeding with the $\alpha_{e_i}$ one by one, so that, when extending $\alpha_{e_i}$, we already have extended $\alpha_{e_1},\dots,\alpha_{e_{i-1}}$, to arcs $\alpha^*_{e_1},\dots,\alpha^*_{e_{i-1}}$. 
Let $\Lambda$ be the set consisting of those $\beta_{f_j}$ and $\alpha^*_{e_k}$ that cross the slightly extended $\alpha_{e_i}$.  Since $\alpha_{e_i}$ crosses each arc in $\Lambda$ exactly once, the two ends of $\alpha_{e_i}$ are in two regions of $F\setminus  (\bigcup_{\lambda\in \Lambda}\lambda)$ that are incident with antipodal segments of $\gamma$.  Choose arbitrarily an antipodal pair, one from each of these antipodal segments.  Apply Lemma \ref{lm:dualPaths} to these two regions to extend the ends of $\alpha_{e_i}$ to these chosen antipodal points, yielding the arc $\alpha^*_{e_i}$.  

Evidently, $\alpha^*_{e_i}$ crosses every arc in $\Lambda$ exactly once.    None of the other $\beta_{f_j}$ and $\alpha^*_{e_k}$ crosses $\alpha_{e_i}$.  Therefore, $\alpha^*_{e_i}$ crosses each of these at most twice, once in each of the two regions in which $\alpha^*_{e_i}$ completes $\alpha_{e_i}$.  Since $\alpha^*_{e_i}$ and any $\beta_{f_j}$ or $\alpha^*_{e_k}$ have interlaced ends in $\gamma$, they cross an odd number of times; thus, they cross exactly once.
\end{cproofof}

\section{Empty triangles in face-convex drawings}\label{sec:emptyTriangles} 

Let $D$ be a drawing of $K_n$, let $xyz$ be a 3-cycle in $K_n$ and let $\Delta$ be an open disc bounded by $D[xyz]$.  Then $\Delta$ is an {\em empty triangle\/} if $D[V(K_n)]\cap \Delta=\varnothing$.     The classic theorem of B\'ar\'any and F\"uredi \cite{barany} asserts that, in any rectilinear drawing of $K_n$, there are $n^2+{}$O$(n\log n)$ empty triangles.

\change {In  Corollary \ref{co:BarFurFaceCon}, we extend the} B\'ar\'any and F\"uredi theorem to pseudolinear drawings by proving the same theorem as theirs for face-convex drawings.  Their proof adapts perfectly, as long as one has an appropriate ``intermediate value" property.  \change{ The other main result of this section is that} any convex (not necessarily face-convex) drawing of $K_n$ has at least $n^2/3+{}$O$(n)$ empty triangles.

Let $D$ be a convex drawing of $K_n$ and suppose that $T$ is a transitive orientation of $K_n$ with the additional property that, for each convex region $\Delta$ bounded by a triangle $(u,v,w,u)$, if $x$ is inside $\Delta$, then $x$ is neither a source nor a sink in the inherited orientation of the $K_4$ induced by $u,v,w,x$.   A convex drawing of $K_n$ with such a transitive orientation is {\em a convex intermediate value drawing\/}.

Convexity is not quite enough for our proof.  A convex drawing $D$ of $K_n$ is {\em hereditarily convex\/} if, for each triangle $T$ there is a specified side $\Delta_T$ of $D[T]$ that is convex and, moreover, for every triangle $T'\subseteq \Delta_T$, $\Delta_{T'}\subseteq \Delta_T$.  Every pseudolinear drawing is trivially hereditarily convex, but so also is the ``tin can" drawing of $K_n$ that has $H(n)$ crossings.  More generally, any drawing of $K_n$ in which arcs are drawn as geodesics in the sphere is hereditarily convex.

\begin{theorem}\label{th:generalBarFur}
A hereditarily convex intermediate value drawing of $K_n$ has $n^2+\textrm{\em O}(n\log n)$ empty triangles.
\end{theorem}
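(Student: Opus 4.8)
The plan is to follow the strategy of Bárány and Füredi \cite{barany}, which counts empty triangles via a local argument at each vertex, and to check that every geometric step they use survives when ``straight line'' is replaced by ``pseudoline'' and, more generally, by the hereditarily-convex structure together with the intermediate-value property. The key reduction in their argument is that an empty triangle can be detected by looking, from the viewpoint of one of its vertices, at the angular order of the other vertices: a triangle $uvw$ is empty precisely when $v$ and $w$ are \emph{consecutive} in the radial order around $u$ among the vertices lying in a suitable halfplane determined by $u$. My first task is to supply the analogue of radial order. For a hereditarily convex drawing with a chosen outer face, fix a vertex $u$; for every other vertex $x$ the edge $D[ux]$ extends (by Theorem~\ref{th:strongConvex}, since hereditary convexity makes the drawing locally behave pseudolinearly around $u$) to a pseudoline through $u$, and these pseudolines give a cyclic ``angular'' order of the vertices as seen from $u$. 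The intermediate-value hypothesis is exactly what guarantees that if $x$ lies in the convex region $\Delta_{uvw}$ then $x$ is neither a source nor a sink in the induced $K_4$, i.e.\ the edge $D[ux]$ enters the triangle between the two edges $D[uv]$ and $D[uw]$ in this angular order. This is the ``appropriate intermediate value property'' the authors allude to.

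With the angular order in hand, I would reprove the two combinatorial halves of the Bárány--Füredi bound. For the \emph{lower} bound $\Omega(n^2)$: for each vertex $u$ and each edge $vw$ with $v,w$ consecutive in the angular order around $u$ (restricted to vertices on the appropriate side), the triangle $uvw$ contains no vertex $x$, because any such $x$ would, by the intermediate-value property, have to lie angularly strictly between $v$ and $w$, contradicting consecutiveness. Summing over $u$ and counting with the correct multiplicity (each empty triangle is counted a bounded number of times) yields at least $n^2 - O(n)$ empty triangles of a special ``canonical'' type. For the sharper $+O(n\log n)$ term one reruns their divide-and-conquer / halving argument: project the vertices onto the angular order around a fixed vertex and recursively bound the number of empty triangles contributed by each halving level; the recursion $T(n) = 2T(n/2) + O(n)$ produces the $O(n\log n)$ correction. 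Every inequality in this recursion is purely order-theoretic once the angular order and the intermediate-value property are available, so the geometric content of the original proof has been fully localized into those two structural facts.

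The hereditary-convexity hypothesis enters precisely to make the recursion \emph{nested}: when the halving line splits the vertex set, the two sides are again convex subconfigurations, and the condition $\Delta_{T'}\subseteq\Delta_T$ for $T'\subseteq\Delta_T$ guarantees that an empty triangle on one side of a splitting pseudoline stays empty when we pass to the subproblem (no vertex from the other side can invade a region it was outside of). Without hereditary convexity the recursion could fail because a triangle empty in the subconfiguration might contain a vertex that was separated off; hereditary convexity forbids exactly this. So the two hypotheses play complementary roles: the intermediate-value property controls the \emph{angular} (local, at a single vertex) combinatorics, and hereditary convexity controls the \emph{global} nesting needed by the recursion.

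\textbf{The main obstacle} I anticipate is the bookkeeping in the angular order when the drawing is only pseudolinear rather than rectilinear: the pseudolines through $u$ cross each other away from $u$, so the ``order around $u$'' is genuinely a cyclic order on a pseudoline arrangement rather than an angular order of rays, and one must verify (using Levi's Lemma, Theorem~\ref{th:levy}, to extend partial arrangements, and Lemma~\ref{lm:noneOrTwo} to control how a single arc meets the boundary of a convex region) that this cyclic order has the same separation properties that the Euclidean angular order enjoys — in particular that ``$x$ lies strictly between $v$ and $w$ angularly'' is equivalent to ``$D[ux]$ enters $\Delta_{uvw}$''. Once that equivalence is nailed down, the remainder is a faithful transcription of the Bárány--Füredi counting, and the $n^2 + O(n\log n)$ bound follows.
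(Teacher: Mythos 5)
There is a genuine gap, and it is foundational: your entire scaffolding rests on extending the edges at a vertex $u$ to pseudolines via Theorem \ref{th:strongConvex}, but that theorem requires \emph{face-convexity}, which is not a hypothesis of Theorem \ref{th:generalBarFur}. Hereditarily convex drawings need not be pseudolinear --- the paper points out explicitly that the ``tin can'' drawing of $K_n$ with $H(n)$ crossings is hereditarily convex --- so there is in general no arrangement of pseudolines, no halfplanes, and no radial order around $u$ to work with; the ``separation properties'' you flag as your main obstacle are not a technicality but a symptom of importing a structure the hypotheses do not provide. You also have the logic of the intermediate-value property backwards: it is not something to be derived from local pseudolinearity (the paper does that derivation only for face-convex drawings, separately, in Theorem \ref{th:faceConvTransOrd}); in Theorem \ref{th:generalBarFur} the transitive orientation is part of the hypothesis, and the paper's proof uses that linear order $v_1,\dots,v_n$ directly as the substitute for angular/positional order. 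Concretely, the paper defines ``$v_k$ is to the left of $v_iv_j$'' by which side of the triangle $v_iv_jv_k$ is convex, shows via heredity that a convex side incident with $v_iv_j$ contains an empty triangle on $v_iv_j$ (Claim \ref{cl:existEmpty}), and then proves the key geometric fact, Claim \ref{cl:noFour}: the graph of ``bad'' pairs $i<j$ (those in at most one empty triangle $v_iv_kv_j$ with $i<k<j$, all intermediate vertices on one side) contains no pattern $i<j<k<\ell$ with $ik,i\ell,j\ell$ all edges. This is a case analysis of the planar and crossing $K_4$ on $v_i,v_j,v_k,v_\ell$ using heredity and the intermediate-value property, and it is exactly the step your sketch never supplies. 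The $O(n\log n)$ term then comes from the Bárány--Füredi extremal graph lemma (Lemma \ref{lm:barany}) applied to the bad-pair graph --- a black-box pattern-avoidance bound --- not from a $T(n)=2T(n/2)+O(n)$ halving recursion run on an angular order.

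Even granting your angular order (say in the genuinely pseudolinear case), your lower-bound accounting loses the leading constant. Counting, for each $u$, the triangles $uvw$ with $v,w$ consecutive around $u$ gives at most about $n(n-1)$ incidences, and each empty triangle can arise this way from up to three of its vertices, so after dividing by the multiplicity you get only roughly $n^2/3$ --- which is the bound the paper proves for merely \emph{convex} drawings in Theorem \ref{th:convexEmptyTriangles}, not the $n^2+O(n\log n)$ claimed here. The constant $1$ requires the per-pair count: all but $O(n\log n)$ of the $\binom n2$ pairs $i<j$ lie in \emph{two} empty triangles $v_iv_kv_j$ with $i<k<j$, and each empty triangle is counted exactly once, via its unique extremal (min,max) pair --- no division by $3$ occurs. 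Your claim that the consecutive-pair count ``yields at least $n^2-O(n)$'' is therefore arithmetically wrong as stated, and repairing it forces you back to precisely the pair-based bookkeeping and the pattern-avoidance lemma that constitute the paper's actual proof.
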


\begin{cproof}
Label $V(K_n)$ with $v_1,v_2,\dots,v_n$ to match the intermediate value orientation, so $\overrightarrow{v_iv_j}$ is the orientation precisely when $i<j$.   We henceforth ignore the arrow and use $v_iv_j$ for an edge only when $i<j$.

B\'ar\'any and F\"uredi \cite[Lemma 8.1]{barany} prove the following fact.

\begin{lemma}\label{lm:barany} Let $G$ be a graph with vertex set $\{1,2,\dots,n\}$.  Suppose that there are no four vertices $i<j<k<\ell$ such that $ik$, $i\ell$, and $j\ell$ are all edges of $G$.  Then $|E(G)|\le 3n\lceil \log n\rceil$. \eop
\end{lemma}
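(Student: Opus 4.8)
The plan is to prove the bound by divide-and-conquer on the vertex set $\{1,\dots,n\}$, splitting at the median. The recursion has depth $O(\log n)$, which supplies the logarithmic factor, while I will show that each level of the recursion contributes only $O(n)$ edges. Let $f(n)$ denote the maximum number of edges in a graph on $\{1,\dots,n\}$ satisfying the hypothesis. Note first that the hypothesis is hereditary: any subset of the vertices, with its inherited order, induces a subgraph that again avoids the configuration, so the recursion is legitimate.

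First I would partition the vertices into $L=\{1,\dots,\lceil n/2\rceil\}$ and $R=\{\lceil n/2\rceil+1,\dots,n\}$ and split the edges into those inside $L$, those inside $R$, and the \emph{crossing} edges (one end in each part). The edges inside $L$ and inside $R$ are bounded recursively by $f(\lceil n/2\rceil)$ and $f(\lfloor n/2\rfloor)$, so the whole problem reduces to bounding the crossing edges.

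The heart of the argument, and the step I expect to require the most care, is the bound of $|L|+|R|=n$ on the number of crossing edges. Since every crossing edge $\{a,b\}$ has $a\in L$, $b\in R$, and hence $a<b$, the forbidden configuration $i<j<k<\ell$ in which all three of $ik,i\ell,j\ell$ are crossing edges forces $i,j\in L$ with $i<j$ and $k,\ell\in R$ with $k<\ell$. I would display the crossing edges as $1$'s in a $|L|\times|R|$ matrix, with rows indexed by $L$ increasing downward and columns by $R$ increasing rightward; then the forbidden configuration is precisely a $1$ at the top-left, top-right, and bottom-right corners of some rectangle. The key observation is the dichotomy that \emph{every crossing edge is either the leftmost $1$ in its row or the bottom-most $1$ in its column}. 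Indeed, if the edge $(a,b)$ were neither, there would be a $1$ at $(a,b')$ with $b'<b$ and a $1$ at $(a',b)$ with $a'>a$, and then $(a,b'),(a,b),(a',b)$ would realize the forbidden pattern with $i=a$, $j=a'$, $k=b'$, $\ell=b$. As there is at most one leftmost-in-row $1$ per row and at most one bottom-in-column $1$ per column, the number of crossing edges is at most $|L|+|R|=n$.

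Combining these pieces gives the recursion $f(n)\le f(\lceil n/2\rceil)+f(\lfloor n/2\rfloor)+n$ with $f(1)=0$. A routine induction, using $\lceil\log_2\lceil n/2\rceil\rceil\le\lceil\log_2 n\rceil-1$ for $n\ge 2$, then yields $f(n)\le n\lceil\log_2 n\rceil$, which is comfortably at most the claimed $3n\lceil\log n\rceil$. The only genuine obstacle is the crossing-edge bound; once the corner-pattern dichotomy is established, both the counting at a single level and the induction over levels are mechanical.
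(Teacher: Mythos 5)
Your proof is correct. Note first that the paper does not prove this lemma at all: it is quoted verbatim, with proof omitted, from B\'ar\'any and F\"uredi \cite[Lemma 8.1]{barany}, so there is no in-paper argument to compare against; your divide-and-conquer is in the same spirit as the original source. The one step that needed care, the bound of $n$ on the crossing edges, is handled correctly: if the crossing edge $(a,b)$, $a\in L$, $b\in R$, is neither leftmost in its row nor bottom-most in its column, then the witnesses $(a,b')$ with $b'<b$ and $(a',b)$ with $a'>a$ give $i=a<j=a'<k=b'<\ell=b$ (the middle inequality because $a'\in L$ and $b'\in R$) with $ik$, $i\ell$, $j\ell$ all edges of $G$, exactly the forbidden pattern; and the union bound over the at most $|L|$ leftmost-in-row $1$'s and at most $|R|$ bottom-most-in-column $1$'s gives the claim. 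Heredity of the hypothesis under order-preserving relabelling makes the recursion legitimate, and the induction $f(n)\le f(\lceil n/2\rceil)+f(\lfloor n/2\rfloor)+n$ with $\lceil\log_2\lceil n/2\rceil\rceil\le\lceil\log_2 n\rceil-1$ for $n\ge 2$ is routine. Your bound $n\lceil\log_2 n\rceil$ is in fact stronger than the stated $3n\lceil\log n\rceil$, which is harmless since the lemma is only used in the paper to certify an O$(n\log n)$ error term.
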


We will apply this lemma exactly as is done in \cite{barany}.   For distinct $i,j,k$ with $i<j$, say that $v_k$ is {\em to the left of $v_iv_j$\/} if the face of the triangle $v_iv_jv_k$ containing the left side (as we traverse $v_iv_j$) of $v_iv_j$ is convex.  

\begin{claim}\label{cl:existEmpty}
If $v_k$ is to the left of $v_iv_j$, then the face of the triangle $v_iv_jv_k$ containing the left side of $v_iv_j$ contains an empty triangle incident with $v_iv_j$.
\end{claim}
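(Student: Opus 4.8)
The plan is to prove Claim \ref{cl:existEmpty} by a minimality (descent) argument over the vertices lying in the relevant convex disc, using hereditary convexity to pass to smaller triangles. Write $\Delta := \Delta_{v_iv_jv_k}$ for the specified convex side of $D[v_iv_jv_k]$; since $v_k$ is to the left of $v_iv_j$, this $\Delta$ is the (convex) disc containing the left side of $v_iv_j$. Note that $v_k \in \Delta\setminus\{v_i,v_j\}$, so this set is nonempty, which is what will seed the argument.

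First I would record a uniform statement for every vertex $v_m\in\Delta\setminus\{v_i,v_j\}$. Since $\Delta$ is convex and $v_i,v_j,v_m\in\Delta$, Definition \ref{df:convex} gives $D[v_iv_m]\subseteq\Delta$ and $D[v_jv_m]\subseteq\Delta$; together with $D[v_iv_j]\subseteq\partial\Delta$, this shows $D[v_iv_jv_m]\subseteq\Delta$. Hereditary convexity then yields $\Delta_m := \Delta_{v_iv_jv_m}\subseteq\Delta$, and since $\Delta_m$ is one of the two discs bounded by $D[v_iv_jv_m]$ and lies inside $\Delta$, it must be the inner disc, namely the one on the left side of $v_iv_j$. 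In particular each such $v_m$ is itself to the left of $v_iv_j$, and $\Delta_m$ is its left-hand convex side.

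Next, among all $v_m\in\Delta\setminus\{v_i,v_j\}$ I would choose $v_\ell$ minimizing the number of vertices of $K_n$ in the open disc $\Delta_\ell := \Delta_{v_iv_jv_\ell}$ (a minimum exists, as the set is finite and nonempty). I claim $\Delta_\ell$ is empty. If not, some vertex $v_m$ lies in the interior of $\Delta_\ell$; then $v_m\in\Delta_\ell\subseteq\Delta$, so the previous paragraph applies, and since $D[v_iv_jv_m]\subseteq\Delta_\ell$ by convexity of $\Delta_\ell$, hereditary convexity gives $\Delta_m\subseteq\Delta_\ell$. The discs $\Delta_m$ and $\Delta_\ell$ share only $D[v_iv_j]$ on their boundaries, so the interior of $\Delta_m$ is contained in the interior of $\Delta_\ell$, while $v_m$ lies in the interior of $\Delta_\ell$ but on the boundary of $\Delta_m$. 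Hence the open disc $\Delta_m$ contains strictly fewer vertices than $\Delta_\ell$, contradicting the choice of $v_\ell$. Therefore $\Delta_\ell$ is an empty triangle; it is incident with $v_iv_j$ and contained in $\Delta$, the face of $v_iv_jv_k$ on the left side of $v_iv_j$, which is exactly what the claim asserts.

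The only delicate point is the containment bookkeeping: verifying that $\Delta_m\subseteq\Delta_\ell$ forces a strict drop in the interior vertex count, which rests on the fact that the two triangles share only the edge $v_iv_j$, so interior points of the smaller disc cannot lie on the boundary of the larger. I expect the main obstacle is ensuring that the convex side selected by hereditary convexity is the inner (left) disc rather than its complement; this is forced precisely because hereditary convexity guarantees $\Delta_{v_iv_jv_m}\subseteq\Delta$, a condition the complementary outer disc could not meet. It is worth noting that this argument uses only (hereditary) convexity and not the intermediate value property, which is instead reserved for controlling the orientation when Lemma \ref{lm:barany} is applied to count the empty triangles.
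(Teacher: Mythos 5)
Your proof is correct and takes essentially the same route as the paper: the paper also picks a minimal convex triangle incident with $v_iv_j$ inside $\Delta$, observing that any vertex inside it is joined by convexity to the three corners within the disc, so heredity produces a strictly smaller convex triangle on $v_iv_j$ and the minimal one must be empty. Your descent by counting interior vertices is just a bookkeeping variant of that minimality argument (and your worry about the two boundaries sharing only $D[v_iv_j]$ is unnecessary, since $\Delta_m\subseteq\Delta_\ell$ already gives the interior containment you need).
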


\begin{proof}
If $v_{\ell}$ is inside the convex triangle $\Delta$, then $v_{\ell}$ is joined within $\Delta$ to the 3 corners of $\Delta$ and the triangle incident with $v_\ell$ contained in $\Delta$, and incident with $v_iv_j$ is convex by heredity.  Since $\Delta$ contains a minimal convex triangle incident with $v_iv_j$, this one is necessarily empty.
\end{proof}

It follows that, as long as $v_iv_j$ has a vertex to the left and a vertex to the right, then $v_iv_j$ is in two empty triangles.   The main point is to show that there are not many pairs $(i,j)$ such that $i<j$ and $v_iv_j$ is in only one empty triangle $v_iv_jv_k$ with $i<k<j$.  We count those that have all intermediate vertices on the left.

\begin{claim}\label{cl:noFour} There do not exist $i<j<k<\ell$ such that: $v_j$ and $v_k$ are on the left of $v_iv_{\ell}$; $v_j$ is on the left of $v_iv_k$; $v_k$ is on the left of $v_jv_\ell$; and each of $v_iv_\ell$, $v_iv_k$, and $v_jv_\ell$ is in at most one empty triangle.
\end{claim}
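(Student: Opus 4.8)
The plan is to assume that such vertices $i<j<k<\ell$ exist and to derive a contradiction by exhibiting two distinct empty triangles incident with the single edge $v_iv_k$, in violation of the hypothesis that $v_iv_k$ lies in at most one empty triangle. Thus most of the listed hypotheses will turn out to be bystanders; the work is to extract a vertex of $K_n$ on each side of $v_iv_k$ together with convexity of both sides.

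First I would focus on the triangle $v_iv_kv_\ell$, whose image $D[v_iv_kv_\ell]$ is a simple closed curve bounding two closed discs. The hypothesis that $v_k$ is to the left of $v_iv_\ell$ says precisely that the disc $\Delta$ lying to the left of $v_iv_\ell$ (as we traverse $v_i\to v_\ell$) is convex. Orienting the boundary of $\Delta$ so that $\Delta$ stays on the left forces the cyclic traversal $v_i\to v_\ell\to v_k\to v_i$; reading off the edge $v_iv_k$ from this cycle, it is traversed $v_k\to v_i$, so with respect to the index direction $v_i\to v_k$ the convex disc $\Delta$ lies on the right of $v_iv_k$. In other words, the hypothesis on $v_iv_\ell$ forces $v_\ell$ to sit to the right of $v_iv_k$ with that right-hand side convex, even though $v_\ell$ is not an intermediate vertex of $v_iv_k$.

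Next I would apply Claim~\ref{cl:existEmpty} on both sides of $v_iv_k$. On the right, $\Delta$ is a convex disc bounded by the triangle $v_iv_kv_\ell$, so the argument of Claim~\ref{cl:existEmpty} (which uses only convexity of the relevant side, not the label ``left'') produces an empty triangle incident with $v_iv_k$ contained in $\Delta$. On the left, the hypothesis that $v_j$ is to the left of $v_iv_k$ makes the left-hand disc of $v_iv_kv_j$ convex, and Claim~\ref{cl:existEmpty} again produces an empty triangle incident with $v_iv_k$, this time on the left. These two empty triangles have their empty open discs on opposite sides of $v_iv_k$ and hence are distinct, so $v_iv_k$ lies in at least two empty triangles, contradicting the hypothesis. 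I expect the remaining hypotheses, concerning $v_iv_\ell$ and $v_jv_\ell$, to be unnecessary for the contradiction; they appear so that the statement matches exactly the forbidden configuration of Lemma~\ref{lm:barany}.

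The step I expect to be the main obstacle is the orientation bookkeeping of the second paragraph: verifying that the convex disc witnessing ``$v_k$ to the left of $v_iv_\ell$'' is the very same disc that lies to the right of $v_iv_k$. This is a consistent-orientation statement for the Jordan curve $D[v_iv_kv_\ell]$, and I would make it precise by fixing the traversal of the boundary that keeps the chosen disc on the left and then comparing, edge by edge, that induced direction against the index direction. The only other point needing care is that Claim~\ref{cl:existEmpty} be invoked in its evident convex-side form rather than literally for a ``left'' side, and that the two empty triangles be seen as distinct because their empty regions lie in different discs bounded by $D[v_iv_k]$.
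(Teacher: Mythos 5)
Your proposal is correct, and it is a genuinely different and substantially shorter route than the paper's. Your orientation bookkeeping is sound: on the oriented sphere, the face $\Delta$ of the triangle $v_iv_kv_\ell$ lying locally to the left of $v_iv_\ell$ has induced boundary traversal $v_i\to v_\ell\to v_k\to v_i$, so along $v_iv_k$ the same disc lies locally on the right; hence ``$v_k$ to the left of $v_iv_\ell$'' is literally the assertion that the right-hand face of $v_iv_k$ in that triangle is convex. Since the proof of Claim~\ref{cl:existEmpty} uses only convexity of the given side plus heredity (the label ``left'' plays no role), you obtain empty triangles incident with $v_iv_k$ on both sides, distinct because their open discs lie locally on opposite sides of the arc $D[v_iv_k]$ (your phrase ``discs bounded by $D[v_iv_k]$'' is imprecise, since an arc bounds no disc, but the local-sides argument you intend is fine), and this contradicts the hypothesis on $v_iv_k$ using only three of the listed hypotheses. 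The paper instead analyses the $K_4$ on $v_i,v_j,v_k,v_\ell$ (planar versus crossing), and in the crossing case uses heredity and the intermediate value property to show the quarter region at the crossing incident with $v_i$ and $v_\ell$ is vertex-free, so that the two empty triangles inside the convex sides of $(v_i,v_\ell,v_j,v_i)$ and $(v_i,v_\ell,v_k,v_i)$ are distinct; notably, its treatment of a vertex $v_r$, $i<r<k$, in that region (``$v_r$ is to the right of $v_iv_k$'' is a contradiction) is exactly your two-sided mechanism, and your insight is that $v_\ell$ itself, after reorientation, already serves as the right-side witness. One caveat is worth recording: the paper's longer argument produces both witnesses on $v_iv_\ell$ with apexes that the intermediate value property forces to have indices strictly between $i$ and $\ell$, and it is this refined form of the claim --- for an edge $v_av_b$, at most one $m$ with $a<m<b$ and $v_av_mv_b$ empty --- that the sentence following the claim feeds into Lemma~\ref{lm:barany}, and that the final count requires to get $n^2$ rather than $n^2/3$ (an empty triangle $v_av_mv_b$ with $a<m<b$ is charged only to the pair $(a,b)$). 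Your right-hand witness at $v_iv_k$ may have apex $v_\ell$, or an apex with index in the interval $(k,\ell)$, so your argument proves the claim exactly as stated but not this de facto strengthening; recovering it would force you back to localizing both witnesses on $v_iv_\ell$, as the paper does.
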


\begin{proof}
Suppose by way of contradiction that:   $i<j<k<\ell$; $v_j$ and $v_k$ are on the left of $v_iv_{\ell}$; $v_j$ is on the left of $v_iv_k$; $v_k$ is on the left of $v_jv_\ell$; and each of $v_iv_\ell$, $v_iv_k$, and $v_jv_\ell$ is in at most one empty triangle.

Let $J$ be the complete subgraph induced by $v_i,v_j,v_k,v_\ell$.   Suppose first that $D[J]$ were a planar $K_4$.  In order for both $v_j$ and $v_k$ to be on the left of $v_iv_\ell$, one of $v_j$ and $v_k$ is inside the convex triangle $\Delta$ bounded by $v_i$, $v_\ell$, and the other of $v_j$ and $v_k$; let $v_{j'}$ be the one inside. Thus, the edges $v_iv_{j'}$ and $v_{j'}v_\ell$ are both inside $\Delta$.  (See Figure \ref{fg:planarK4}.)

\begin{figure}[!ht]
\begin{center}
\scalebox{1.0}{\input{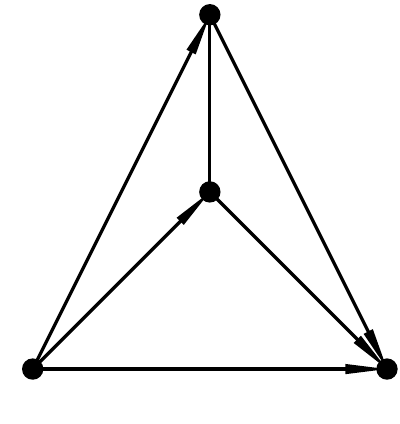_t}}
\caption{The case $v_i,v_j,v_k,v_\ell$ make a planar $K_4$.}\label{fg:planarK4}
\end{center}
\end{figure}

\ignore{
\begin{figure}[!ht]
\begin{center}
\includegraphics[scale=.3]{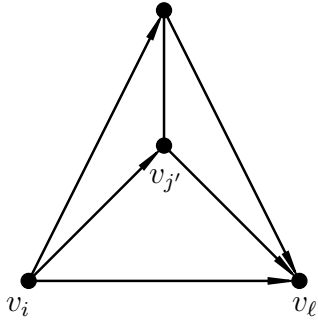}
\caption{The case $v_i,v_j,v_k,v_\ell$ make a planar $K_4$.}\label{fg:planarK4}
\end{center}
\end{figure}
}

The nested condition implies that the three smaller triangular regions  inside $\Delta$ and incident with $v_{j'}$ are all convex.   If $j'=j$, then $v_j$ is to the right of $vv_k$, a contradiction.  If $j'=k$, then $v_k$ is to the right of $v_jv_\ell$, a contradiction.  Therefore, $D[J]$ is not a planar $K_4$.

In a crossing $K_4$, the convex sides of any of the triangles in the $K_4$ are the unions of two regions incident with the crossing.  For each 3-cycle $T$ in this $K_4$, the fourth vertex shows that it is on the non-convex side of $T$, so it is the bounded regions in the figure that are convex.  With the assumption that $v_j$ and $v_k$ are both to the left of $v_iv_\ell$, we see that $v_iv_\ell$ is in the 4-cycle that bounds a face of $D[J]$.  Let $v_{j'}$ be the one of $v_j$ and $v_k$ that is the neighbour of $v_i$ in this 4-cycle.  (See Figure \ref{fg:nonPlanarK4}.)

\begin{figure}[!ht]
\begin{center}
\scalebox{1.0}{\input{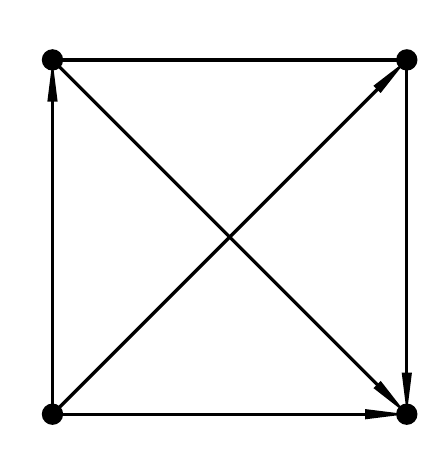_t}}
\caption{The case $v,v_j,v_k,v_\ell$ make a nonplanar $K_4$.}\label{fg:nonPlanarK4}
\end{center}
\end{figure}

\ignore{
\begin{figure}[!ht]
\begin{center}
\includegraphics[scale=.3]{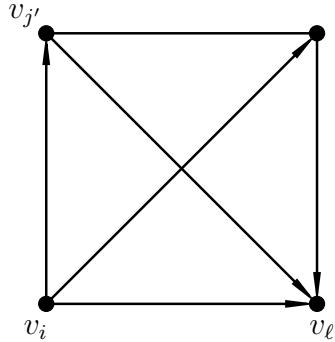}
\caption{The case $v,v_j,v_k,v_\ell$ make a nonplanar $K_4$.}\label{fg:nonPlanarK4}
\end{center}
\end{figure}
}

If $v_{j'}=v_k$, then $v_j$ is to the right of $v_iv_k$, a contradiction.  Therefore, $v_{j'}=v_j$.  Consider the quarter of the inside of the 4-cycle that is incident with $v_i$, $v_\ell$, and the crossing.  We claim that there is no vertex of $K_n$ in the interior of this region.

If $v_r$ were in this region, then $r<i$ or $r>\ell$ violates the intermediate value property.  If $i<r<k$, then heredity implies that we have the contradiction that $v_r$ is to the right of $v_iv_k$.  If $j<r<\ell$, then $v_r$ is to the right of $v_jv_\ell$, an analogous contradiction.  It follows that the convex triangles bounded by both $(v_i,v_\ell,v_j,v_i)$ and $(v_i,v_\ell,v_k,v_i)$ contain empty triangles and these empty triangles are different, the final contradiction.
\end{proof}

It follows from Claim \ref{cl:noFour} and Lemma \ref{lm:barany} that only O$(n\log n)$ pairs $i<j$ have the property that there is at most one $k$ such that $i<k<j$ and  $v_iv_kv_j$ is an empty triangle.  

Since there are $\binom n2$ pairs $i<j$, there are $\binom n2-\textrm{O}(n\log n)$ pairs with two empty triangles $v_iv_kv_j$ and $v_iv_{k'}v_j$, with $i<k,k'<j$.  Thus, there are $n^2-\textrm{O}(n\log n)$ empty triangles, as required.
\end{cproof}

The proof that the B\'ar\'any-F\"uredi result holds for pseudolinear drawings comes from showing that every face-convex drawing has a transitive ordering with the intermediate value property.

\begin{theorem}\label{th:faceConvTransOrd}  If $D$ is a face-convex drawing of $K_n$, then there is a transitive ordering with the intermediate value property.
\end{theorem}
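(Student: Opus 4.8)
The plan is to feed off Theorem \ref{th:strongConvex}: since $D$ is face-convex with outer face $F$, it is pseudolinear in the plane with $F$ the infinite face, so I may take an arrangement $\Sigma$ of pseudolines, one containing each edge, and write $\sigma_{pq}$ for the pseudoline containing $D[pq]$. Following the proof of Theorem \ref{th:levy}, I view $\Sigma$ in $\pp$, with $\ell_\infty$ the line at infinity of the plane $\rtwo=\pp\setminus\ell_\infty$; note $\ell_\infty$ is disjoint from every vertex and from every bounded region. I would produce the order from a ``parallel pencil''. First I would choose a point $z\in\ell_\infty$ avoiding the finitely many points where the $\sigma_{pq}$ meet $\ell_\infty$. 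Then, applying Levi's Enlargement Lemma (Theorem \ref{th:levy}) one pseudoline at a time, I obtain for each vertex $v$ a pseudoline $\mu_v$ through $z$ and $v$, and likewise a pseudoline $\mu_y$ through $z$ and any point $y$; adding them keeps an arrangement, and since any two meet only at $z$, in the plane they are pairwise disjoint separating arcs. Such a family is linearly ordered, so I may let $t(y)$ record the position of $\mu_y$. The choice of $z$ forces $t(u)\ne t(v)$ for distinct vertices, and ordering $V(K_n)$ by increasing $t$ gives a linear order, i.e.\ a transitive orientation of $K_n$; transitivity is automatic.

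Next I would establish the single elementary fact that makes everything work: an intermediate value statement along one pseudoline. If $r$ lies on the arc $D[pq]$, then $t(r)$ lies between $t(p)$ and $t(q)$, strictly if $r$ is interior to $D[pq]$. Indeed $\sigma_{pq}$ and $\mu_p$ share $p$, so as two pseudolines they meet only at $p$; hence $D[pq]\setminus\{p\}$ lies strictly on the $q$-side of $\mu_p$, and symmetrically strictly on the $p$-side of $\mu_q$, which is the claim. Now let $\Delta$ be a convex triangle bounded by $D[uvw]$ with a vertex $x$ in its interior. Because $D$ is face-convex, $\Delta$ is the disc disjoint from $F$; being bounded it is disjoint from $\ell_\infty$, so $\Delta$ is a disc in $\pp$. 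The pseudoline $\sigma_{ux}$ is noncontractible in $\pp$, hence is not contained in the disc $\Delta$; following it from the corner $u$ through $x$, it must leave $\Delta$ across the boundary at some $y\ne u$. Since $\sigma_{ux}$ meets $\sigma_{uv}$ and $\sigma_{uw}$ only at $u$, the exit point $y$ lies on $D[vw]$, and $x$ is interior to the subarc of $\sigma_{ux}$ from $u$ to $y$.

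Applying the intermediate value fact twice then finishes the argument: $t(x)$ is strictly between $t(u)$ and $t(y)$, while $t(y)$ is weakly between $t(v)$ and $t(w)$, and a short case check gives $\min\{t(u),t(v),t(w)\}<t(x)<\max\{t(u),t(v),t(w)\}$. Thus in the $K_4$ induced by $u,v,w,x$ the vertex $x$ is neither source nor sink, which is exactly the intermediate value property. The hard part will be the middle paragraph: rigging the parallel pencil so that $t$ is a genuine total order, and using the convexity of $\Delta$ (equivalently, that $\Delta$ is a disc avoiding $\ell_\infty$) to force the chord $\sigma_{ux}$ to exit through the opposite side $vw$ rather than looping back to $u$ inside $\Delta$; everything after that is routine bookkeeping with the intermediate value fact. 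One could instead build the order directly from the sets $\Side{uv}1,\Side{uv}2$ of Lemma \ref{lm:basicProperties}, but the pencil makes the required property most transparent.
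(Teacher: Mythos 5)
Your argument is correct in substance, but it takes a genuinely different route from the paper's. The paper's proof is a short, purely combinatorial one: take any vertex $v_1$ incident with the outer face $F$, order the remaining vertices $v_2,\dots,v_n$ by the rotation at $v_1$ (labelled so $v_1v_2$ and $v_1v_n$ lie on $F$), and verify the property directly on planar $K_4$'s --- deleting the vertices before $v_i$ and after $v_\ell$ shows $v_i$ and $v_\ell$ are incident with the face of $D[J]$ containing $F$, so the vertex interior to the relevant convex side is $v_j$ or $v_k$, hence non-extreme. You instead invoke Theorem \ref{th:strongConvex} to obtain a pseudoline extension of $D$ and then use Theorem \ref{th:levy} repeatedly to build a pencil through a point $z$ at infinity, getting a sweep order $t$; non-extremality of an interior vertex $x$ then follows from honest one-dimensional betweenness along $\sigma_{ux}$. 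There is no circularity (Sections 2 and 3 are independent of Section 4), and the steps you compress do check out: $v\notin\mu_u$ for distinct vertices because $\mu_u$ meets $\sigma_{uv}$ exactly once, namely at $u$; the exit point $y$ of $\sigma_{ux}$ from $\Delta$ must lie in the interior of $D[vw]$ for the same once-crossing reason; and the betweenness of $t$-values does follow from the separation order of the pencil. What your route buys is a conceptually stronger picture (a genuine sweep, in the spirit of allowable sequences, with an intermediate value statement along every pseudoline); what it costs is the full strength of the paper's main theorem where a five-line local argument suffices.

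Two repairs you should make explicit. First, add $\ell_\infty$ itself to the arrangement before applying Levi's lemma, so each $\mu_v$ meets $\ell_\infty$ only at $z$; otherwise $\mu_v\setminus\ell_\infty$ need not be a single separating arc of the plane and the linear order on the pencil is not immediate. Second, your sentence ``because $D$ is face-convex, $\Delta$ is the disc disjoint from $F$'' is not literally true: the convex region containing $x$ in its interior can be the side containing $F$ --- already in a planar $K_4$ with vertices $u,v,w,x$ and $x$ inside the triangle $uvw$, the side of the triangle $uvx$ containing $F$ is convex and contains $w$ in its interior. Under the fully literal ``each convex region'' reading of the definition, the theorem would then fail even for $n=4$, since all four vertices would have to be non-extreme in a transitive tournament on four vertices. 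The operative reading --- the one the paper's own proof uses, and the one matching the specified sides $\Delta_T$ in the hereditary structure fed into Theorem \ref{th:generalBarFur} --- quantifies only over the sides disjoint from $F$, which is exactly what you verify; you should state that restriction as the meaning of the hypothesis rather than derive it from face-convexity.
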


\begin{cproof}
Let $v_1$ be any vertex incident with a face $F$ witnessing face-convexity.  Let $v_2,v_3,\dots,v_n$ be the cyclic rotation at $v_1$ induced by $D$, labelled so that $v_1v_2$ and $v_1v_n$ are incident with $F$.

We claim that this transitive ordering has the intermediate value property. Let $J$ be an isomorph of $K_4$ such that $D[J]$ is planar; let $i<j<k<\ell$ be such that the four vertices of $J$ are $v_i,v_j,v_k,v_\ell$.  Deleting $v_1,\dots,v_{i-1}$ and $v_{\ell+1},\dots,v_n$ shows that $v_i$ and $v_\ell$ are incident with the face of $D[J]$ that contains $F$.  The convex side of the triangle of $J$ bounding this face is, therefore, the side containing one of $v_j$ and $v_k$.  Evidently, this one is neither a sink nor a source of $J$.  
\end{cproof}

\change{Theorems \ref{th:generalBarFur} and \ref{th:faceConvTransOrd} immediately imply the generalization of B\'ar\'any and F\"uredi to face-convex drawings.

\begin{corollary}\label{co:BarFurFaceCon}  Let $D$ be a face-convex drawing of $K_n$.  Then $D$ has at least $n^2+\textrm{\em O}(n\log n)$ empty triangles.
\end{corollary}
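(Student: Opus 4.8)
The plan is to recognize Corollary \ref{co:BarFurFaceCon} as an immediate consequence of the two preceding theorems, once we confirm that a face-convex drawing satisfies the hypotheses of Theorem \ref{th:generalBarFur}. That theorem supplies the bound $n^2+\textrm{O}(n\log n)$ for any \emph{hereditarily convex intermediate value drawing}, so it suffices to verify that a face-convex drawing of $K_n$ is simultaneously convex, hereditarily convex, and equipped with the required intermediate value orientation.

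First I would dispose of the orientation requirement, since it is exactly the content of Theorem \ref{th:faceConvTransOrd}: every face-convex drawing admits a transitive ordering with the intermediate value property. Nothing further is needed on that front. I would also note in passing that a face-convex drawing is convex in the sense of Definition \ref{df:convex}, because for each triangle $T$ the disc $\Delta_T$ disjoint from the outer face $F$ is convex, so at least one of the two discs bounded by $D[T]$ is convex.

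The only point requiring a short argument is hereditary convexity. For each triangle $T$ I would take the specified side to be $\Delta_T$, the closed disc bounded by $D[T]$ disjoint from $F$, which is convex by Definition \ref{df:convex} \ref{it:faceConvex}. For heredity, let $T'$ be a triangle with $D[T']\subseteq\Delta_T$. Because $\Delta_T$ is convex and contains the three vertices of $T'$, it contains all three edges of $T'$, so $D[T']$ is a simple closed curve lying inside $\Delta_T$. Of the two discs it bounds, one is contained in $\Delta_T$ and the other meets the complement of $\Delta_T$; the latter contains $F$ since $F\notin\Delta_T$. Hence $\Delta_{T'}$, the disc not containing $F$, is the one inside $\Delta_T$, giving $\Delta_{T'}\subseteq\Delta_T$. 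This is precisely the disc-nesting property of convex hulls already exploited in Lemma \ref{lm:K4K5sameSides} and Lemma \ref{lm:basicProperties}.

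Combining these observations, $D$ is a hereditarily convex intermediate value drawing, and Theorem \ref{th:generalBarFur} yields that $D$ has at least $n^2+\textrm{O}(n\log n)$ empty triangles. I do not anticipate any genuine obstacle here: all the substantive work resides in Theorems \ref{th:generalBarFur} and \ref{th:faceConvTransOrd}, and the corollary reduces to checking that the hereditary-convexity hypothesis holds, which follows directly from the fact that in a face-convex drawing the convex side of each triangle is the side away from the outer face.
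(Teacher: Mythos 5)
Your proposal is correct and follows the paper's own route exactly: the paper derives Corollary \ref{co:BarFurFaceCon} as an immediate consequence of Theorems \ref{th:generalBarFur} and \ref{th:faceConvTransOrd}. Your only addition is to spell out the hereditary-convexity check (taking $\Delta_T$ to be the disc away from $F$ and verifying $\Delta_{T'}\subseteq\Delta_T$ by a disc-nesting argument), a step the paper treats as immediate --- it notes that pseudolinear drawings are trivially hereditarily convex and these coincide with face-convex drawings by Theorem \ref{th:strongConvex} --- and your direct verification of it is sound.
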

}

We conclude this work by showing that convexity is enough to guarantee O$(n^2)$ empty triangles.  This is somewhat surprising, since it is known that general drawings of $K_n$ can have as few as $2n-4$ empty triangles \cite{fewEmpties}.

\begin{theorem}\label{th:convexEmptyTriangles}
Let $D$ be a convex drawing of $K_n$.  Then $D$ has $n^2/3 -{}${\em O}$(n)$ empty triangles.
\end{theorem}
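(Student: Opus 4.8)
The plan is to bound the number $N$ of empty triangles from below by a double count. For an edge $e$ of $K_n$, let $d(e)$ denote the number of empty triangles incident with $e$; since each triangle has exactly three edges, $3N=\sum_e d(e)$. Thus it suffices to show that $d(e)\ge 2$ for all but $O(n)$ edges (and $d(e)\ge 1$ for every edge), for then $3N\ge 2\bigl(\binom n2-O(n)\bigr)=n^2-O(n)$ and hence $N\ge n^2/3-O(n)$. This is the exact analogue of the rectilinear situation, where an edge $uv$ lies in two empty triangles unless it is an edge of the convex hull, and the hull has only $O(n)$ edges.

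First I would prove the key fact that replaces Claim \ref{cl:existEmpty} in the absence of heredity: \emph{for each edge $uv$ and each side $i$ of $uv$, if some triangle $uvw$ has its convex disc $\Delta$ incident with side $i$, then $uv$ lies in an empty triangle whose empty disc is on side $i$.} I would argue by a shrinking/minimality procedure inside the convex disc $\Delta$. Among the vertices $z$ with $uvz$ convex toward side $i$ and side-$i$ disc $\Delta_z\subseteq\Delta$, choose $x$ so that $\Delta_x$ contains the fewest vertices of $K_n$ in its interior; I claim this minimum is $0$, so that $uvx$ is the required empty triangle. If some vertex $y$ were interior to $\Delta_x$, then, because $\Delta_x$ is convex, the $K_4$ on $\{u,v,x,y\}$ cannot be a crossing $K_4$: in a crossing $K_4$ the fourth vertex lies on the non-convex side of each of its triangles (the observation underlying Lemma \ref{lm:noBadK4}), which would put $y$ on the non-convex side of $uvx$ and contradict $y\in\operatorname{int}\Delta_x$ with $\Delta_x$ convex. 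Hence $D[\{u,v,x,y\}]$ is planar with $y$ inside $D[uvx]$, so $D[uy]$ and $D[vy]$ lie in $\Delta_x$ and the side-$i$ disc $\Delta_y$ of $uvy$ satisfies $\Delta_y\subsetneq\Delta_x$ with strictly fewer interior vertices; checking that $\Delta_y$ is again convex (using the convexity of $\Delta_x$) contradicts the minimality of $x$. Because the empty disc of an empty triangle is on a definite side of each of its edges, this lemma gives $d(uv)\ge 2$ for every edge $uv$ that has a convex triangle on each of its two sides, and $d(e)\ge 1$ for every edge (take any third vertex $w$; $uvw$ is convex on some side).

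It then remains to bound the \emph{exceptional} edges, namely those possessing a convex triangle on only one side. I would show these number $O(n)$ by establishing that they form a subgraph of bounded degree --- concretely, that each vertex $v$ is incident with at most two of them, the two being the extreme edges in the rotation at $v$, exactly as the two hull edges at a point in the rectilinear case. This bounding of the exceptional edges is the main obstacle: heredity is unavailable, so one cannot simply invoke nesting of convex discs, and the argument must instead extract the extremal ``hull'' structure directly from Definition \ref{df:convex} together with the planar/crossing $K_4$ dichotomy used above. Once the exceptional edges are bounded by $O(n)$, the double count $3N=\sum_e d(e)\ge 2\binom n2-O(n)=n^2-O(n)$ completes the proof of Theorem \ref{th:convexEmptyTriangles}.
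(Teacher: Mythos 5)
Your counting frame ($3N=\sum_e d(e)$, two empty triangles per non-exceptional edge, $O(n)$ exceptions) is the same as the paper's, but both substantive pillars of your argument have gaps, one of which is fatal as written. The fatal one is in your shrinking lemma: to contradict the minimality of $x$ you must put $y$ back into the candidate set, i.e.\ you must know that the side-$i$ disc $\Delta_y$ of $uvy$ is convex. That is exactly the heredity property that Definition \ref{df:convex} does not supply: convexity guarantees that \emph{some} side of $uvy$ is convex, but nothing forces it to be the side contained in $\Delta_x$. The paper is explicit on this point --- ``convexity is not quite enough'' --- and introduces \emph{hereditarily convex} drawings as a strictly stronger hypothesis for Theorem \ref{th:generalBarFur}; your parenthetical ``checking that $\Delta_y$ is again convex (using the convexity of $\Delta_x$)'' is precisely this unproven heredity step, and no lemma in the paper delivers it. In effect your lemma is Claim \ref{cl:existEmpty} transplanted from the hereditary setting to the merely convex one, which is the move the paper deliberately avoids.

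The paper's proof of Theorem \ref{th:convexEmptyTriangles} is engineered around this obstacle. It runs the descent not over sides of a single triangle but over crossing $K_4$'s: for a crossed edge $e$ with crossing partner $f$, each of the four triangles of the crossing $K_4$ $J_{e,f}$ has its convex side \emph{forced} --- the fourth vertex sits on the non-convex side, the observation underlying Lemma \ref{lm:noBadK4} --- so the union $\Delta_{e,f}$ of the four convex sides is the closed quadrilateral disc, and any vertex $x$ interior to it is joined inside $\Delta_{e,f}$ to all four corners, producing an edge $f'$ at $x$ crossing $e$ with $\Delta_{e,f'}\subsetneq\Delta_{e,f}$. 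Every disc in the descent is again a crossing-$K_4$ disc, so convexity of all relevant sides comes for free, and a minimal $\Delta_{e,f}$ gives two empty triangles on $e$. This same device dissolves what you called your ``main obstacle'': the exceptional edges need no hull or bounded-degree structure (your claim of at most two per vertex is unproven and unnecessary). If $uv$ is crossed by some $xy$, the crossing $K_4$ forces convex triangles $uvx$ and $uvy$ on the two sides of $uv$, so every exceptional edge is uncrossed; the uncrossed edges form a planar subgraph and hence number at most $3n-6$. So your architecture is salvageable, but as written both missing pieces --- heredity in the descent and the $O(n)$ bound on exceptions --- are exactly what the paper's crossing-$K_4$ construction is there to supply.
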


\begin{cproof}
Let $U$ be the subgraph of $K_n$ induced by the edges not crossed in $D$.  Then $D[U]$ is a planar embedding of the simple graph $U$, so $U$ has at most $3n-6$ edges.  Thus, there are $n^2/2-{}$O$(n)$ edges that are crossed in $D$.

For each edge $e$ that is crossed in $D$, let $f$ be one of the edges that crosses $e$.  Let $J_{e,f}$ be the $K_4$ induced by the vertices of $K_n$ incident with $e$ and $f$.  Because $D[J_{e,f}]$ is a crossing $K_4$, each of the four triangles in $J_{e,f}$ has a unique convex side; it is the side not containing the fourth vertex.  Let $\Delta_{e,f}$ be the closed disc that is the union of these four convex triangles.

It follows that, for any other vertex $x$ that is on the convex side of any of these four triangles, $x$ is joined inside $\Delta_{e,f}$ to the four corners of $\Delta_{e,f}$, implying one of the edges $f'$ incident with $x$ crosses $e$.  Now $\Delta_{e,f'}$ is strictly contained in $\Delta_{e,f}$, proving that there is a minimal $\Delta_{e,f}$ that does not contain any vertex of $K_n$ in its interior.

The two convex triangles incident with $e$  and contained in such a minimal $\Delta_{e,f}$ are both empty. Thus, every crossed edge is in at least two empty triangles.  Since every empty triangle contains precisely three edges, there are at least 
\[
\frac 132\left(\frac {n^2}2-\textrm{O}(n)\right) = \frac{n^2}3 - \textrm{O}(n)
\]
empty triangles, as required.
\end{cproof}

\ignore{
\section{Proof of Gioan's Theorem}\label{sec:gioan}

In this section, we give a simple, self-contained proof Gioan's Theorem \cite{gioan}.  It is difficult to compare this argument with that of \cite{gioan}, since the proofs have been omitted there.  It seems likely that there are many similarities and we certainly derived some inspiration from \cite{gioan}.  The definition of a Reidemeister move is given just after this statement.

\begin{theorem}\label{th:gioan}
Let $D_1$ and $D_2$ be drawings of $K_n$ in the sphere that have the same rotation schemes.  Then there is a sequence of Reidemeister moves that transforms $D_1$ into $D_2$.
\end{theorem}

We will first prove our intermediate lemma.  This requires a small new consideration.  Let $\Sigma$ be an arrangement of arcs in the plane.   A {\em vertex of $\Sigma$\/} is a point that is the intersection of two or more arcs in $\Sigma$.   

At a vertex $v$, the rotation of the arcs containing $v$ is of the form $\sigma_1,\sigma_2,\dots,\sigma_k,\sigma_1,\sigma_2,$ $\dots,\sigma_k$.  
Let $F_0,F_1,\dots,F_k$ be the faces containing the angles $\sigma_k\sigma_1$, $\sigma_1\sigma_2$, \dots, $\sigma_k\sigma_1$, respectively.   Suppose $P$ is a dual path containing the subpath $(F_0,F_1,\dots,F_k)$.  The path obtained from $P$ by {\em sliding over the vertex $v$\/} is the path $P$, except $(F_0,F_1,\dots,F_k)$ is replaced by the dual path (of the same length) through the other faces incident with $v$.

A {\em Reidemeister move\/} is a sliding over a vertex $v$ that is in precisely two arcs in $\Sigma$.  The following may be viewed as a supplement to Lemma \ref{lm:dualPaths}.

\begin{lemma}\label{lm:reidemeister}
Let $\Sigma$ be an arrangement of arcs in the plane and let $a$ and $b$ be any two points in the plane not in $\mathcal P(\Sigma)$.  Let $F_a$ and $F_b$ be the faces of $\Sigma$ containing $a$ and $b$, respectively.  Then any two $F_aF_b$-paths in $\Sigma^*$ are equivalent up to sliding over vertices.
\end{lemma}

\begin{cproof}
Let $P$ and $Q$ be distinct $F_aF_b$-paths in $\Sigma^*$.  Let $P_1$ and $Q_1$ be subpaths of $P$ and $Q$ having common end points but are otherwise disjoint.  Then $P_1\cup Q_1$ bounds a disc $\Delta$ and each curve in $\Sigma$ that crosses one of $P_1$ and $Q_1$ crosses the other.  We will show that there is a vertex in $\Delta$ over which we can slide $P_1$.

Since $P_1$ and $Q_1$ are distinct dual paths, there is a vertex of $\Sigma$ in $\Delta$.  Let $\sigma\in \Sigma$ have an arc across $\Delta$ and contain a vertex of $\Sigma$; let $v$ be the first vertex of $\Sigma$  encountered as we traverse $\sigma$ across $\Delta$ from its $P_1$-end.      Among all the $\sigma\in \Sigma$ that contain $v$, either all have $v$ as their first encountered vertex or there are two, $\sigma$ and $\bar \sigma$, consecutive in the rotation at $v$,  such that $v$ is the first encountered vertex for $\sigma$, but not for $\bar\sigma$.  In the former case, we can slide $P_1$ across $v$.

Suppose $\sigma'\in\Sigma$ has a crossing with $\bar\sigma$ between the intersection of $\bar\sigma$ with $P_1$ and $v$.  Since $\sigma'$ does not cross $\sigma$ between $\sigma\cap P_1$ and $v$, and the arc $\sigma'$ contained in the disc bounded by $P_1$, $\sigma$, and $\bar\sigma$ must intersect the boundary of $\Delta$ at least twice, $\sigma'$ crosses $P_1$ between $\sigma\cap P_1$ and $\bar\sigma\cap P_1$.  

Let $\bar v$ be the first vertex of $\Sigma$ encountered as we traverse $\bar\sigma$.  Then every other arc in $\Sigma$ that contains $\bar v$ intersects $P_1$ between $\sigma\cap P_1$ and $\bar\sigma\cap P_1$.

Letting $b(v)$ denote the number of arcs in $\Sigma$ that cross $P_1$ between $\sigma\cap P_1$ and $\bar\sigma\cap P_1$, we see that $b(\bar v)<b(v)$.  Therefore, there is always a vertex $w$ of $\Sigma$ such that $b(w)=0$ and we can slide $P_1$ across $w$.

After sliding $P_1$ across $w$, we get a new $P$ that either has more vertices in common with $Q$ or the disc bounded by the new $P_1$ and $Q_1$ has fewer vertices of $\Sigma$.  In either case, an easy induction completes the proof.
\end{cproof}

Gioan's Theorem considers two drawings $D_1 $ and $D_2$ of $K_n$ in the sphere that have the same rotation scheme.  Let $t,u,v,w$ be four distinct vertices of $K_n$.  Let $T$ be the triangle induced by $t,u,v$.  Then $D_1[T]$ is a simple closed curve in the sphere.  The rotations at $t$, $u$, and $v$ determine where bits of the edges $D_1[tw]$, $D_1[uw]$, and $D_1[vw]$ go from their ends $t$, $u$, and $v$, respectively, relative to $D_1[T]$.  The side of $D_1[T]$ that has the majority (two or three) of these bits of edges is where $D_1[w]$ is.  If $tw$ is the minority edge, then $D_1[tw]$ crosses $D_1[uv]$; conversely, a crossing $K_4$ produces, for each of its triangles, a minority edge.   This simple observation immediately yields the following fundamental fact.

\begin{enumerate}[label=(F\arabic*)]
\item \label{it:rotationK4}
Let $D_1$ and $D_2$ be two drawings of $K_n$ with the same rotation scheme.  If $J$ is any $K_4$ in $K_n$, then there is a homeomorphism of the sphere mapping $D_1[J]$ onto $D_2[J]$ that preserves the vertex-labels of $J$.
\end{enumerate}
 
There are some elementary corollaries of \ref{it:rotationK4}:
\begin{enumerate}[resume,label=(F\arabic*)]
\item\label{it:rotationCrossing}  the pairs of crossing edges are determined by the rotation scheme; 
\item\label{it:rotationDirCrossing} if the edges of $K_n$ are oriented, then the directed crossings are determined by the rotation scheme; and
\item\label{it:triangleContainment}  if $u,v,w,x$ are distinct vertices of $K_n$, then the side  of the triangle (relative to any of its oriented sides) induced by $u,v,w$ that contains $x$ is determined by the rotation scheme.
\end{enumerate}
By \ref{it:rotationDirCrossing}, we mean that, if $e$ and $f$ cross, then, as we follow the orientation of $e$, the crossing of $e$ by the traversal $f$ is either left-to-right in all drawings or right-to-left in all drawings, depending only on the rotation scheme.

\begin{lemma}\label{lm:crossingOrderDetermines}  Let $D_1$ and $D_2$ be two drawings of $K_n$ with the same rotation scheme.  Suppose that, for each edge $e$, as we traverse $e$ from one end to the other, the edges that cross $e$ occur in the same order in both $D_1$ and $D_2$.  Then there is a homeomorphism of the sphere mapping $D_1[K_n]$ onto $D_2[K_n]$ that preserves all vertex- and edge-labels.
\end{lemma}

\begin{cproof}
This is a consequence of the well known fact that a rotation scheme of a graph determines a unique (up to surface homemomorphisms) cellular embedding of a graph in an orientable surface.  We construct a planar map from each of $D_1$ and $D_2$ by inserting a vertex of degree 4 at each crossing point.  The oriented crossings and the orders of the crossings of each edge are the same in both $D_1$ and $D_2$, so the rotations at these degree 4 vertices are also the same.  Therefore, the planar maps are the same, as claimed.
\end{cproof}

Lemma \ref{lm:crossingOrderDetermines} asserts that the orders of crossings determines the drawing.  Thus, we need to consider the situation that some edge has two edges crossing it in different orders in the two drawings.  The first step, our next lemma, is to identify a special structure that must occur.

Let $e$, $f$, and $g$ be three distinct edges in a drawing $D$ of $K_n$, no two having a common end.  Suppose each two of $e$, $f$, and $g$ have a crossing, labelled $\times_{e,f}$, $\times_{e,g}$, and $\times_{f,g}$.  The union of the segments of each of $e$, $f$, and $g$ between their two crossings is a simple closed curve.  If one of the two sides of this simple closed curve does not have an end of any of $e$, $f$, and $g$, then this side is the  {\em pre-Reidemeister triangle constituted by $e$, $f$, and $g$.}  

Let $D_1$ and $D_2$ be drawings of $K_n$ with the same rotation scheme.  A pre-Reidemeister triangle $T$ for $D_1$ constituted by the edges $e$, $f$, and $g$ in a drawing $D_1$ of $K_n$ is a {\em Reidemeister triangle\/} for $D_1$ and $D_2$ such that the edges $e$, $f$, and $g$ constitute a pre-Reidemeister triangle for $D_2$, but with the clockwise traversal of the three segments between pairs of crossings giving the opposite cyclic ordering of the crossings.    {\bf\large The  following corresponds to Corollaries 2 and 3 of \cite{gioan}.}

\begin{lemma}\label{lm:differentOrderReidTriang}
Let $D_1$ and $D_2$ be two drawings of $K_n$ with the same rotation scheme.   Suppose $e,f,g$ are edges of $K_n$ such that $f$ and $g$ cross $e$ in different orders in $D_1$ and $D_2$.  Then $e$, $f$, and $g$ constitute a Reidemeister triangle $T$ for $D_1$ and $D_2$ such that one of the open discs bounded by $D_1[T]$ contains no vertex of $D_1[K_n]$.
\end{lemma}

\begin{cproof}
Let $e=rs$, $f=uv$, and $g=xy$ and let $J$ be the $K_4$ induced by $r,s,x,y$.  By \ref{it:rotationK4}, $D_1[J]$ and $D_2[J]$ are the same.  In $D_1[J]$, there is one face (the 4-face) bounded by the 4-cycle $rxsy$, and four faces (the 3-faces) bounded by: $xr\times$; $yr\times$; $xs\times$; and $ys\times$, where $\times$ is the crossing between $rx$ and $xy$.  At least two of the four 3-faces do not have either $u$ or $v$ in their interiors; we may choose the labellings of $r,s$ and of $x,y$ so that one such face of $D_1$ is $yr\times$.  Observe that \ref{it:triangleContainment} implies that $yr\times$ is also disjoint from $u,v$ in $D_2$.  We state a slightly more general version of this that also follows from \ref{it:triangleContainment}.

\begin{claim}\label{cl:uvSamePlace}
The vertex $u$ is in the same face of both $D_1[J]$ and $D_2[J]$, and likewise for $v$.  \eop
\end{claim}

We may choose the labelling of $D_1$ and $D_2$ so that, in $D_1$,  $uv$ crosses $rs$ in the segment from $r$ to $\times$, while in $D_2$ this crossing is between $s$ and $\times$.
  In $D_1$, $uv$ crosses $rs$ between the faces bounded by $rx\times$ and $yr\times$.  Because neither $u$ nor $v$ is in the 3-face $yr\times$, $D_1[uv]$ crosses the boundary of $yr\times$  exactly one other time.  This other crossing is either on the segment of $xy$ between $y$ and $\times$ or on $ry$.    As we use the following claim several times in this proof, we include all hypotheses for ease of later reference.

\begin{claim}\label{cl:ry-cross} If the 3-face $yr\times$ has neither $u$ nor $v$ in $D_1$ and $D_1[uv]$ crosses the segment of $rs$ between $r$ and $\times$, then $D_1[uv]$ crosses $D_1[ry]$ and does not cross the segment of $D_1[xy]$ between $y$ and $\times$.
\end{claim}

\begin{proof}  By way of contradiction, suppose $D_1[uv]$ does not cross $D_1[ry]$.  From the preceding remarks, $D_1[uv]$ crosses the segment of $D_1[xy]$ between $y$ and $\times$.  Then \ref{it:rotationCrossing} asserts $D_2[uv]$ crosses $D_2[xy]$ and does not cross $D_2[ry]$.  Since $D_2[uv]$ crosses $rs$ between $s$ and $\times$, $D_2[uv]$ does not cross $rs$ between $r$ and $\times$.  Thus, $D_2[uv]$ does not cross two of the three sides of the 3-face $yr\times$.  By Claim \ref{cl:uvSamePlace}, neither $u$ nor $v$ is in the 3-face $yr\times$ in $D_2$, so $D_2[uv]$ does not go into the 3-face $yr\times$.  

It follows that $D_2[uv]$ crosses $xy$ between $x$ and $\times$.  (See Figure \ref{fg:case1}.)  If the crossings of $D_2[uv]$ with $xy$ and $rs$ are joined by a subarc of $D_2[uv]$ contained in the 3-face $xs\times$, then we have our Reidemeister triangle for $D_1$ and $D_2$.  By way of contradiction, we suppose this does not happen.

The edge $D_2[uv]$ either crosses $xs$ or not.  We consider these subcases separately.

\medskip\noindent{\bf Case 1:}  {\em $D_2[uv]$ crosses $xs$.}

\medskip In this case, there is an odd number of crossings of the boundary of the 3-face $xs\times$, showing that exactly one end, say $u$, of $uv$ is inside this 3-face.  Since the crossings with $xy$ and $rs$ are not connected in this 3-face, $D_2[u]$ connects directly to one of these crossings and the other connects to the crossing of $xs$.  We use the left-right symmetry of Figure \ref{fg:case1} to assume, without loss of generality, that $D_2[u]$ connects directly to the crossing of $rs$, while the crossings of $uv$ with $xs$ and $xy$ are joined by an arc $\alpha$.

\begin{figure}[!ht]
\begin{center}
\includegraphics[scale=.3]{Case1Gioan}\hskip .25truein \includegraphics[scale=.3]{Case1D2}
\caption{The drawings $D_1$ and $D_2$.}\label{fg:case1}
\end{center}
\end{figure}

Since \ref{it:triangleContainment} implies $D_1[u]$ is inside both triangles $rsx$ and $xys$, it follows that $D_1[u]$ is in the 3-face $xs\times$.
There is no crossing of $D_1[uv]$ with either side of $xs\times$ incident with $\times$.  Therefore, as we traverse $D_1[uv]$ from $D_1[u]$,  the first crossing with $D_1[J]$ is from the 3-face $xs\times$ across $xs$ to the 4-face.  By \ref{it:rotationDirCrossing}, $\alpha$ has this same orientation.  In particular, the crossing of $D_2[uv]$ with $xy$ is from $xr\times$ to $xs\times$.

\begin{figure}[!ht]
\begin{center}
\includegraphics[scale=.3]{Case1D1end}\hskip .25truein \includegraphics[scale=.3]{Case1D2end}
\caption{The drawings $D_1$ and $D_2$ at the end of Case 1.}\label{fg:case1end}
\end{center}
\end{figure}

The only completion for $D_2[uv]$ agreeing with these directions is to join the crossing of $D_2[uv]$ with $xy$ through $D_2[rx]$ to connect up with $\alpha$.  This implies that $D_1[uv]$ goes from $u$ through $xs$, through the 4-face to $xr$, connecting up with the arc through $y,r,\times$ and out through $yx$ to the 4-face.  (See Figure \ref{fg:case1end}.)

The only edges of $J$ that $D_2[ux]$ can cross are the edges of the triangle $T=J-x$ consisting of $rs$, $sy$, and $yr$.  Since $u$ and $x$ are one the same side of $D_2[T]$, there is an even number of crossings of $D_2[ux]$ with $D_2[T]$.  If there are any crossings, the first (from $u$) must be with $rs$ (between $\times$ and $s$) and then $ys$.  Thus, there is no crossing with $ry$.   These crossings must also occur in $D_1$; but then $D_1[uv]$ is in the 4-face and is separated from $x$ by $D_1[uv]$, the desired contradiction.

\medskip\noindent{\bf Case 2:}  {\em $D_2[uv]$ does not cross $xs$.}

\medskip By assumption, the crossings of $D_2[uv]$ with $rs$ and $xy$ are not connected by an arc in the 3-face $xs\times$.  Therefore, both $u$ and $v$ must be in this 3-face and each connects directly in $D_2[uv]$ to one of the two crossings.  We have  the desired contradiction:  both  ends of $D_1[uv]$ are in the 3-face $xs\times$; $D_1[uv]$  does not cross $xy$ between $x$ and $\times$; $D_1[uv]$ does not cross $rs$ between $s$ and $\times$; $D_1[uv]$ does not cross $xs$; and $D_1[uv]$ has an arc outside the 3-face.  
\end{proof}

%\medskip\noindent{\bf Case 2:}  {\em $D_1[uv]$ crosses $D_1[ry]$.}

\medskip  Since $D_1[uv]$ crosses $D_1[ry]$, there is an arc $\alpha$ in $D_1[uv]$ contained in the 3-face $yr\times$ joining the crossing points of $uv$ with the segment of $rs$ from $r$ to $\times$ and with $ry$.  By \ref{it:rotationCrossing}, $D_2[uv]$ crosses $D_2[ry]$ and also the segment of $rs$ between $s$ and $\times$.  In particular, $D_2[uv]$ does not cross the segment of $rs$ between $r$ and $\times$.  Since $D_2[u]$ is not in $yr\times$, $D_2[uv]$ crosses the segment of $xy$ between $y$ and $\times$.  Again applying \ref{it:rotationCrossing},  $D_1[uv]$ crosses $xy$.  Because of Claim \ref{cl:ry-cross}, $D_1[uv]$  does not cross the segment between $y$ and $\times$, so it crosses the segment between $x$ and $\times$.

Since $D_1[uv]$ crosses $xy$ between $x$ and $\times$, the contrapositive of Claim \ref{cl:ry-cross} implies (with $x$ in the role of $y$) that at least one of $u$ and $v$ is in the 3-face $xr\times$.  In the same way, with $D_2$ in the role of $D_1$ and $s$ in the role of $x$, one of $u$ and $v$ is in the 3-face $y,s,\times$.   We choose the labelling so that $u$ is in $xr\times$ and $v$ is in $ys\times$.  

One final application of Claim \ref{cl:ry-cross} with $x$ and $s$ in the roles of $y$ and $r$, respectively, shows that $D_1[uv]$ crosses $xs$.
 At this stage, we have the situation illustrated in Figure \ref{fg:case2}.

\begin{figure}[!ht]
\begin{center}
\includegraphics[scale=.3]{Case2D1}\hskip .25truein \includegraphics[scale=.3]{Case2D2}
\caption{The drawings $D_1$ and $D_2$ in Case 2.}\label{fg:case2}
\end{center}
\end{figure}

In $D_1[uv]$, either the arc in the 3-face $yr\times$ or the arc in the 3-face $xs\times$ connects through the 4-face with the bit of $D_1[uv]$ crossing $ys$ to $v$.  The top-bottom symmetry allows us to assume it is the latter.  This implies that $D_1[uv]$ crosses $xs$ from the 3-face side to the 4-face side.  There are two crossings into the 4-face and two going out from the 4-face.   Therefore, $uv$ crosses $yr$ from the 4-face to the 3-face $y,r,\times$.

Thus,  starting at $u$, $D_1[uv]$ crosses in order:  $xr$, $yr$, $rs$, $xy$, $xs$, and $ys$.  Also starting at $u$, $D_2[uv]$ crosses $xr$, $yr$, $xy$,  $rs$, $xs$, and $ys$, ending at $v$.  Evidently, there is a Reidemeister triangle, as required. 

\medskip
Now suppose $R$ is a Reidemeister triangle for $D_1$ and $D_2$ such that $D_1[R]$ contains a vertex $a$ of $K_n$; we use the same labelling $e=xy$, $f=uv$, and $g=rs$ as above for the edges determining $R$.  We may choose the labelling so that $a$ is inside the 3-face $xr\times$ and that $D_1[R]$ is also contained in this 3-face.  Thus, the arc $\alpha_1$ in $D_1[uv]$ that is part of the Reidemeister triangle is in this 3-face.  Thus, in $D_2$, the corresponding arc $\alpha_2$ in $uv$ is in the 3-face $ys\times$.  

Claim \ref{cl:ry-cross} applied to $D_1$ with $x$ in the role of $y$ implies exactly one of $u$ and $v$ is in the 3-face $xr\times$.  The same claim applied to $D_2$ shows the other one of $u$ and $v$ is in the 3-face $ys\times$.  It now follows that, up to the top-bottom symmetry, the situation for $D_1$ is precisely as depicted in Figure \ref{fg:noVertex}.

\begin{figure}[!ht]
\begin{center}
\includegraphics[scale=.3]{noVertex}
\caption{The drawing $D_1$ with $a$ in the Reidemeister triangle.}\label{fg:noVertex}
\end{center}
\end{figure}

To see that $D_1[ur]$ does not cross the boundary of the 3-face $xr\times$, consider the simple closed curve $\beta$ contained in $D_1[\{xy,xr,uv\}]$. Since $u$ and $r$ are both on the same side of $\beta$, $D_1[ur]$ crosses $\beta$ an even number of times.  However, the only portion of $\beta$ not in an edge incident with either $u$ or $r$ is contained in $xy$, which can be crossed at most once.  Therefore, $D_1[ur]$ does not cross $\beta$ at all, so $ur$ is inside the 3-face $xr\times$.  

Similarly, we show $ux$ is inside $xr\times$.  The simple closed curve $\gamma$ contained in $D_1[\{rs,uv,$ $rx\}]$ has $u$ and $x$ on the same side, and so is crossed an even number of times by $D_1[ux]$.  Since only $rs$ is not incident with either $u$ or $x$, this is the only segment of $\gamma$ that can be crossed by $D_1[ux]$.  Thus $\gamma$ is not crossed, and $D_1[ux]$ is inside $xr\times$.

Next we show that the edge $D_1[ar]$ does not cross the boundary of the 3-face $xr\times$.  Let $\delta$ be the simple curve consisting of the portion of $uv$ between the crossings of $uv$ with $rs$ and $rx$.  Let $\varepsilon$ be the simple closed curve consisting of $\delta$ together with the segments of $xr$ from $x$ to the crossing with $uv$, the segment of $xy$ from $x$ to $\times$, and the segment of $rs$ between $\times$ and the crossing with $uv$.

Evidently, $\varepsilon$ has $a$ and $r$ on the same side, and so is crossed by $ar$ an even number of times.  Since $ar$ can only cross $x\times$ and $\delta$,  either it crosses neither or both.  If it crosses $x\times$, then it does not cross $y\times$ and, therefore, it does not cross $ys$.  It must cross the simple closed curve $uv$ between its crossings with $rs$ and $ys$ and so it cannot cross $\delta$, a contradiction.

Thus $D_1[ar]$ does not cross $xr\times$.  It follows that $D_1[ar]$ crosses $uv$ in the portion between the crossings of $uv$ with $rs$ and $xy$.
Moreover, $D_1[ar]$ cannot cross $uv$ a second time; in particular, it does not cross the segment of $D_1[uv]$ between $u$ and the crossing with $xr$.  Therefore, $D_1[ar]$ does not cross $ux$.

  We now turn our attention to $D_2$.  By \ref{it:rotationCrossing}, the edges $D_1[ur]$ and $D_1[ux]$ do not cross $xr\times$.  In $D_1$, $a$ is in a region $R_1$ bounded by $r,u,x,\times,r$.  The triangles $urx$, $rxy$ and $rsx$ combine to show that $a$ is also in $D_2[R_1]$.     
  
The same rationale as in the preceding paragraphs shows that:
\begin{enumerate}
\item $D_2[uv]$ does not go into the interior of $D_2[R_1]$; 
\item $D_2[ar]$ does not cross either $ur$ or $ux$ and remains in $D_2[R_1]$; and 
\item therefore, $D_2[ar]$ does not cross $D_2[uv]$.
\end{enumerate}
The last of these items contradicts \ref{it:rotationCrossing} and the conclusion above that $D_1[ar]$ crosses $D_1[uv]$.
 \end{cproof}

We are now ready to prove Gioan's Theorem.  {\bf\large The structure of our proof closely follows that given by the algorithm in \cite{gioan} that immediate follows the statement of Theorem 2.}
 
\begin{cproofof}{Theorem \ref{th:gioan}}
Let $v$ be any vertex of $K_n$.  Induction (with $n\le4$ as the base) shows that there is a sequence $\Sigma$ of such Reidemeister moves that converts the drawing $D_1[K_n-v]$ into $D_2[K_n-v]$. 

We claim we can realize all the Reidemeister moves of $\Sigma$ within the drawing $D_1[K_n]$, by interspersing some moves that only move edges incident with $v$.  Suppose $\Sigma = \sigma_1\sigma_2\cdots\sigma_k$ and that, for some $i\ge 1$, we have been able to do all the moves $\sigma_1$, $\sigma_2$, \dots, $\sigma_{i-1}$.  At this point, we have a drawing $D'_1[K_n]$ that has the property that, doing the sequence $\sigma_i\sigma_{i+1}\cdots\sigma_k$ on $D'_1[K_n-v]$, we obtain $D_2[K_n-v]$.  In particular, the Reidemeister triangle $R_i$ used to perform $\sigma_i$ is empty relative to $D'_1[K_n-v]$.  Moreover, since $D'_1$ and $D_2$ have the same rotation schemes, Lemma \ref{lm:differentOrderReidTriang} implies that $v$ is also not inside $D'_1[R_i]$.  

It follows that the only crossings of $D'_1[R_i]$ are by edges incident with $v$.  Such edges cross $D'_1[R_i]$ in exactly two of its three sides and, since no two of the edges incident with $v$ cross, their segments  inside $D'_1[R_i]$ are disjoint.  It follows that they can be moved out of $R_i$ by Reidemeister moves, creating a new drawing $D''_1[K_n]$ in which $R_i$ is empty.  Thus, performing the Reidemeister move $\sigma_i$ on $D''_1[K_n]$ produces a new drawing of $K_n$ in which the moves $\sigma_1$, $\sigma_2$, \dots, $\sigma_{i}$ have all been done.  

It follows that we may assume $D_1[K_n-v]$ is the same as $D_2[K_n-v]$.  We complete the proof by showing that we can perform Reidemeister moves on the edges incident with $v$ to convert $D_1$ into $D_2$. 
For ease of notation and reference, we will use $K_n-v$ to denote the common drawings $D_1[K_n-v]$ and $D_2[K_n-v]$.  We may assume that, for $i=1,2$, $D_i[K_n]$ is obtained from $K_n-v$ by using dual paths for each edge $vw$ incident with $v$, together with a small segment in the last face to get from the dual vertex of that face to $w$.  

This understanding needs a slight refinement, since, for example, it is possible for two edges incident with $v$ to use the same sequence of faces (in whole or in part).  Thus, as dual paths, they would actually use the same segments.  We allow this, as long as the two edges do not cross on the common segments.  They can be slightly separated at the end to reconstruct the actual drawing.

The triangles of $K_n-v$ and the common rotations determine the face $F$ of $K_n-v$ containing $v$, so this is the same in both $D_1$ and $D_2$.   It follows that, for each vertex $w$ of $K_n-v$, $D_1[vw]\cup D_2[vw]$ is a closed curve $C_{w}$ with finitely many common segments (which might be just single dual vertices).   In particular, the closed curve $C_w$ divides the sphere into finitely many regions.

\begin{claim}\label{cl:noVertexSeparation}
For each vertex $w$ of $K_n-v$, all the vertices of $K_n-\{v,w\}$ are in the same region of $C_w$.
\end{claim}

\begin{proof}Let $x$ and $y$ be vertices of $K_n-\{v,w\}$.  If $xy$ does not cross $D_1[vw]$, then it also does not cross $D_2[vw]$, so $xy$ is disjoint from $C_w$, showing $x$ and $y$ are in the same region of $C_w$. Letting $J$ be the $K_4$ induced by $v,w,x,y$,  we may assume that,
in each of $D_1[J]$ and $D_2[J]$, $vw$ crosses $xy$.  

For $i=1,2$,  the path $(x,w,y)$ in $D_i[J]$ is incident with the face $F_i$ of $D_i[J]$ bounded by the 4-cycle $(v,x,w,y,v)$.  In particular, there is an $xy$-arc $\gamma_i$ in $F_i$ that goes very near alongside $(x,w,y)$ and disjoint from $D_i[vw]$.  Furthermore, we may choose the arcs $\gamma_1$ and $\gamma_2$ to be equal.  The arc $\gamma_1$ shows that $x$ and $y$ are in the same region of $C_w$.
\end{proof}

 For $w\in V(K_n-v)$,  a {\em $w$-digon\/} is a simple closed curve in $C_w$ consisting of a subarc of $D_1[vw]$ and a subarc of $D_2[vw]$.   If $D_1[vw]\ne D_2[vw]$, then it is easy to see that there is at least one $w$-digon.  

For each $w$-digon $C$, Claim \ref{cl:noVertexSeparation} shows that one side of $C$ in the sphere has no vertex of $K_n-\{v,w\}$.  This closed disc is the {\em clean side\/} of $C$.  

Label the vertices of $K_n-v$ as $w_1,w_2,\dots,w_{n-1}$.  Suppose $i\in\{1,2,\dots,n-1\}$ is such that $D_1[vw_1],\dots,D_1[vw_{i-1}]$ are all the same as $D_2[vw_1],\dots,D_2[vw_{i-1}]$, respectively.  We show that there is a drawing $D'_1$, obtained from $D_1$ by Reidemeister moves that move only edges from among $vw_i,vw_{i+1},\dots,vw_{n-1}$, so that 
$D'_1[vw_1],\dots,D'_1[vw_{i}]$ are all the same as $D_2[vw_1],\dots,D_2[vw_{i}]$, respectively.   This will complete the proof.

If $D_1[vw_i]=D_2[vw_i]$, then we set $D'_1=D_1$, and we are done. In the remaining case, there are $w_i$-digons.  We show that we can find a sequence of Reidemeister moves to create a new drawing $D'_1$ such that $D'_1[vw_i]$ has more agreement  with $D_2[vw_i]$ than $D_1[vw_i]$ has.  Furthermore, the only edges moved are among $vw_i,vw_{i+1},\dots,vw_{n-1}$.  Clearly, this is enough.

Begin by selecting, among all $w_i$-digons, a $w_i$-digon $C$ with minimal clean side $S$;  no other $w_i$-digon has its clean side contained in $S$.  If $xy$ is an edge of $K_n-\{v,w_i\}$ that intersects $S$, then $xy\,\cap\, (S\setminus C)$ consists of a single arc that has one end in $D_1[vw_i]$ and one end in $D_2[vw_i]$.    We will not do anything with these arcs, except in various applications of Lemma \ref{lm:reidemeister} in which only edges from among $D_1[vw_i],D_1[vw_{i+1}],\dots,D_1[vw_{n-1}]$ are adjusted.  %The other thing these edges $xy$ show is that the numbers of crossings of $D_1[vw_i]\cap S$ and $D_2[vw_i]\cap S$ are the same.  

\begin{claim}\label{cl:1-(i-1)}  For $j\in\{1,2,\dots,i-1\}$, the edge $D_1[vw_j]$ is disjoint from $S$.
\end{claim}

\begin{proof}
If, for some $j\in \{1,2,\dots,i-1\}$, $D_1[vw_j]$ has a point in $S$, then, since $w_j$ is not in $S$, $D_1[vw_j]$ crosses $C_{w_i}$.  But $D_1[vw_j]=D_2[vw_j]$, showing it is disjoint from $D_1[vw_i]\cup D_2[vw_i]$.
\end{proof}

\begin{claim}\label{cl:(i+1)-(k-1)}
For each $j\in\{i+1,i+2,\dots,k-1\}$, $D_1[vw_j]\cap (S\setminus C)$ consists of disjoint open arcs, each having both ends in $(C\cap D_2[vw_i])\setminus D_2[w_i]$.
\end{claim}

\begin{proof}
Let $p$ be a point of $D_1[vw_j]\cap (S\setminus C)$.     Since $w_j$ is not inside $S$, as we follow $D_1[vw_j]$ from $p$ towards $D_1[w_j]$, there is a first point $q$ in $C$.  Since $D_1[vw_j]$ is disjoint from $D_1[vw_i]$,  $q$ must be in $D_2[vw_i]\setminus D_2[\{v,w_i\}]$.  Likewise, in moving from $p$ toward $D_1[v]$, there is a first point reached that is in $D_2[vw_i]\setminus D_2[w_i]$.
\end{proof}

Notice that no two of the arcs described in Claim \ref{cl:(i+1)-(k-1)} can intersect.  Therefore, there is such an arc $\alpha$ which, together with a subarc $\alpha'$ of $D_2[vw_i]\cap C$, makes a minimal digon.  We can then use Lemma \ref{lm:reidemeister} to move $\alpha$ to agree with $\alpha'$.  We repeat this procedure until there are no such arcs left in $S$, at which time Lemma \ref{lm:reidemeister} shows we can move $D_1[vw_i]$ onto $D_2[vw_i]$.  (Here is the principal place where we allow several edges, especially those incident with $v$, to share a dual path in $D_1$.)

The only edges moved are among $D_1[vw_i],\dots,D_1[vw_{n-1}]$, as claimed.
\end{cproofof}

}

\end{document}